\theoremstyle{plain}
\numberwithin{equation}{section}
\newcommand{\cO}{\mathcal{O}}
\newcommand{\cX}{\mathcal{X}}
\theoremstyle{plain}
\newtheorem{theo}{Theorem}[section]
\newtheorem{prop}[theo]{Proposition}
\newtheorem{lemm}[theo]{Lemma}
\newtheorem{coro}[theo]{Corollary}
\newtheorem{conj}[theo]{Conjecture}
\newtheorem{quest}[theo]{Question}
\newtheorem{defi-prop}[theo]{Definition-Proposition}
\theoremstyle{definition}
\newtheorem{rema}[theo]{Remark}
\newtheorem{exam}[theo]{Example}
\begin{document}
\title{Deformation of nef adjoint canonical line bundles}

\author{Mu-Lin Li}
\address{School of Mathematics, Hunan University, China}
\email{mulin@hnu.edu.cn}

\author{Sheng Rao}
\address{School of Mathematics and statistics, Wuhan  University, Wuhan 430072, China}
\email{likeanyone@whu.edu.cn}

\author{Kai Wang}
\address{School of Mathematics and statistics, Wuhan  University, Wuhan 430072, China}
\email{kaiwang@whu.edu.cn}
\thanks{The authors are partially supported by NSFC (Grant No. 12271412, W2441003) and Hubei Provincial Innovation Research Group Project (Grant No. 2025AFA044).}

\subjclass[2020]{Primary 14E30; Secondary 32G05, 32J27, 32J17,14J30,53C24}
\keywords{Minimal model program (Mori theory, extremal rays); 
Deformations of complex structures, Compact K\"ahler manifolds: generalizations, classification, Compact complex 3-folds, Transcendental methods of algebraic geometry (complex-analytic aspects), Rigidity results}

\date{\today}

\begin{abstract}
Much inspired by J. A. Wi\'sniewski's nef-value function method, we prove that in a smooth projective family over the unit disk, if the adjoint bundle of the canonical line bundle with a relatively semiample line bundle is nef on one fiber, then it remains nef on all fibers. We further extend this result to the semiampleness of the adjoint canonical line bundles. Using these, we prove the deformation invariance of any generalized plurigenera by assuming that only one fiber admits the semiample canonical line bundle and improve the first author--Xiao-Lei Liu's recent deformation rigidity of projective manifolds with semiample canonical line bundles. In particular, also by E. Viehweg--K. Zuo's result on the minimal number of singular fibers in a family and the first author--X. Liu's isotriviality result, if a projective family over $\mathbb{P}^1$ or an elliptic curve has one fiber with the big and nef (or more generally semiample) canonical line bundle, then all fibers are isomorphic to this fiber.

Next, much inspired by M. Andreatta--T. Peternell's deformation theoretical approach, we prove that, in a smooth K\"ahler family of threefolds, if the canonical line bundle of one fiber is not nef, then none of its small deformations admits a nef canonical line bundle either. This partially confirms a problem posed by F. Campana--T. Peternell and the global stability of semiampleness of canonical line bundles of threefolds under a K\"ahler smooth deformation.

\end{abstract}
\maketitle

\section{Introduction: main results and corollaries}\label{Intro}
Let $\pi:\mathcal{X}\rightarrow B$ be a smooth proper morphism between complex manifolds with fiber $X_t:=\pi^{-1}(t)$ for any $t\in B$. Denote by $K_{\mathcal{X}}$ the canonical line bundle on $\mathcal{X}$ (and similarly for the fibers and other complex manifolds). Then $K_{X_t}=K_{\mathcal{X}}|_{X_t}$ by adjunction formula. Recall that additionally if $\pi:\mathcal{X}\rightarrow B$ is a projective morphism, then $\pi$ is called a \emph{smooth projective family}.

For a smooth projective family over a projective manifold,  J. A. Wi\'sniewski \cite{Wi91b,Wi09} proves that if the canonical line bundle of one fiber is not nef, then the ones of all other fibers are not nef, either. More complex analytically, for a smooth projective family over a complex manifold,  M. Andreatta--T. Peternell \cite{AP97} prove that if the canonical line bundle of the central fiber is not nef, then none of its small deformations admits a nef canonical line bundle under the restriction on extremal contractions over the central fiber. 

Similar to \cite{AP97}, our work is carried out in the complex analytic setting, focusing on the deformation behavior of nef adjoint canonical line bundles. 
For convenience, we use additive notation to describe the tensor operation on line bundles, and always denote by $\pi:\mathcal{X}\rightarrow \Delta$ a smooth family over the unit disk $\Delta$ in $\mathbb{C}$.
\subsection{Deformation of nefness for projective families} One main theorem of this paper is: 
\begin{theo}[=Theorem \ref{pair nef deformation}]{\label{adjoint nef limit}}
Let $\pi:\mathcal{X}\rightarrow \Delta$ be a smooth projective family and $L$ a $\pi$-semiample line bundle on $\mathcal{X}$. If $K_{X_0}+L_{0}$ is nef, then $K_{X_t}+L_t$ is nef for any $t\in\Delta$. Here and henceforth, denote by $L_t$ the restriction $L|_{X_t}$.
\end{theo}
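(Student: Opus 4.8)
The plan is to carry out J.\,A.\ Wi\'sniewski's nef-value-function argument in the relative setting over $\Delta$. Fix a $\pi$-ample line bundle $H$ on $\mathcal{X}$ and, for each $t\in\Delta$, set
\[
\tau(t)\ :=\ \inf\bigl\{\, s\in\mathbb{R}_{\ge 0}\ :\ K_{X_t}+L_t+sH_t\ \text{is nef}\,\bigr\},
\]
so that $K_{X_t}+L_t$ is nef exactly when $\tau(t)=0$. Since $L_t$ is nef (as $L$ is $\pi$-semiample) and $H_t$ is ample, $\tau(t)$ is finite, and Kawamata's rationality theorem on $X_t$ gives $\tau(t)\in\mathbb{Q}$; when $\tau(t)>0$ the face of $\overline{NE}(X_t)$ supported by $K_{X_t}+L_t+\tau(t)H_t$ contains an extremal rational curve $\ell$ with $0<-K_{X_t}\cdot\ell\le\dim X_t+1$, whence $\tau(t)\,(H_t\cdot\ell)=-(K_{X_t}+L_t)\cdot\ell$ is a positive integer $\le\dim X_t+1$, so that boundedness of the family confines $\tau$ to a fixed finite set of values over each relatively compact subset of $\Delta$. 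It therefore suffices to show that $U:=\{t\in\Delta:\tau(t)=0\}$ is both open and closed in $\Delta$; since $\Delta$ is connected and $0\in U$ by hypothesis, this yields $U=\Delta$, which is the assertion.

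\emph{Openness of $U$ (the limiting direction).} If $t_i\to t_0$ with $K_{X_{t_i}}+L_{t_i}$ not nef, the cone theorem on $X_{t_i}$ together with nefness of $L_{t_i}$ gives an extremal rational curve $\ell_i\subset X_{t_i}$ with $-K_{X_{t_i}}\cdot\ell_i\le\dim X_{t_i}+1$ and $(K_{X_{t_i}}+L_{t_i})\cdot\ell_i<0$; by boundedness of the family the degrees $H\cdot\ell_i$ are uniformly bounded, so the classes $[\ell_i]$ lie in a subscheme of the relative Chow scheme of $\mathcal{X}/\Delta$ that is proper over $\Delta$. A subsequential limit $1$-cycle $\zeta$ is then a nonzero effective cycle on $X_{t_0}$, and since intersection with $K_{\mathcal{X}}+L$ is a continuous integer-valued function on that Chow scheme, $(K_{X_{t_0}}+L_{t_0})\cdot\zeta=\lim_i(K_{X_{t_i}}+L_{t_i})\cdot\ell_i<0$, so some component of $\zeta$ violates nefness and $t_0\notin U$. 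Hence $\Delta\setminus U$ is closed.

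\emph{Closedness of $U$ (the propagation direction), the main obstacle.} One must show that if $K_{X_{t_0}}+L_{t_0}$ is not nef then $K_{X_t}+L_t$ is not nef for all $t$ near $t_0$. Using that $L$ is $\pi$-nef, the relative cone theorem for $\mathcal{X}/\Delta$ produces a $(K_{\mathcal{X}}+L)$-negative extremal ray $R$ of $\overline{NE}(\mathcal{X}/\Delta)$ whose generating rational curve has anticanonical degree $\le\dim X_t+1$; let $\phi_R\colon\mathcal{X}\to\mathcal{Z}$ be its contraction over $\Delta$. If $\phi_R$ is of fibre type or divisorial, then $\mathrm{Exc}(\phi_R)$ is irreducible of dimension $\ge\dim\mathcal{X}-1$ and, by upper semicontinuity of the fibre dimension of $\mathcal{Z}\to\Delta$, is contained in no single fibre, hence dominates $\Delta$; so every fibre — in particular $X_0$ — carries a curve in $R$, of negative $(K_{\mathcal{X}}+L)$-degree, which already contradicts the hypothesis. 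The genuinely delicate case is when every $(K_{\mathcal{X}}+L)$-negative extremal ray of $\overline{NE}(\mathcal{X}/\Delta)$ gives a small (flipping) contraction; one then runs the relative $(K_{\mathcal{X}}+L)$-minimal model program over $\Delta$ — the flipping loci avoiding the central fibre since $K_{X_0}+L_0$ is nef — to reduce, after finitely many flips, to a fibre-type contraction or a relatively nef model, and transfers the conclusion back across the flips. I expect this small-contraction analysis, together with the justification in the complex-analytic category of the relative cone, contraction and flip statements over the disk — where $\pi$-semiampleness of $L$ is essential to keep the program finite — to be the technical heart of the proof, in the spirit of Wi\'sniewski's deformation of nef-value morphisms and Andreatta--Peternell's deformation of extremal contractions.
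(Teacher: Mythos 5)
Your overall strategy (a relative nef-threshold function, the relative cone and contraction theorems over the disk, and an open-plus-closed argument on the connected base) is in the spirit of the paper's proof, but both halves have gaps and the second one is serious. In the ``closedness'' (propagation) direction you only treat fibre-type and divisorial contractions and defer the small-contraction case to a relative $(K_{\mathcal{X}}+L)$-MMP with flips over $\Delta$. This is precisely the case your argument cannot afford to postpone: existence and, above all, termination of flips for the klt pair $(\mathcal{X},\tfrac{1}{m}D)$ in the complex-analytic category are not available in arbitrary dimension, and you do not explain how nefness of $K_{X_t}+L_t$ on the fibres is transported back across a flip of the total space. The paper needs no case division at all: after replacing $L$ by a klt boundary $\tfrac{1}{m}D$ with $D\in H^0(\mathcal{X},mL)$ general (Cartan's Theorem A over the Stein base plus Bertini), Lemma \ref{inequality} proves the Ionescu--Wi\'sniewski inequality $\dim F+\dim\mathcal{L}(R)\ge\dim\mathcal{X}=\dim X_0+1$ for the locus of \emph{any} $(K_{\mathcal{X}}+D)$-negative extremal ray on the smooth $(n+1)$-fold $\mathcal{X}$ via bend-and-break, and Lemma \ref{locus lemma} plays this off against the opposite bound $2\dim_{\mathbb{C}}Z_0-\dim_{\mathbb{C}}\phi_R(Z_0)\le\dim_{\mathbb{C}}X_0$ of \cite[Lemma 1.1]{Wi91b} to conclude that the locus of such a ray meeting $X_0$ must dominate a neighbourhood of $0$ --- whether the contraction is small, divisorial or of fibre type. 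This gives the constancy of the nef-value function (Lemma \ref{etaconstant}) and hence the closedness of the nef locus (Proposition \ref{nef limit} and Remark \ref{closed}).

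In the ``openness'' direction your Chow-scheme limit needs a uniform bound on the degrees $H\cdot\ell_i$, which you assert follows from ``boundedness of the family''. The length bound $-K_{X_{t_i}}\cdot\ell_i\le\dim X_{t_i}+1$ does not bound $H\cdot\ell_i$: one has $\tau(t_i)=\frac{-(K_{X_{t_i}}+L_{t_i})\cdot\ell_i}{H_{t_i}\cdot\ell_i}$ with numerator in $\{1,\dots,n+1\}$, so the dangerous scenario is $\tau(t_i)\to 0^{+}$ with $H_{t_i}\cdot\ell_i\to\infty$, in which case no limit cycle exists; ruling this out is essentially equivalent to the semicontinuity you are trying to prove, so the step is circular as written. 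The paper sidesteps Euclidean openness altogether: each sublevel set $\{t\in\Delta\ |\ \eta(t)<1/k\}$ is a union of the ampleness loci of $K_{X_t}+D_t+sA_t$, each Zariski open and nonempty because $K_{X_0}+D_0$ is nef, hence is open and dense; by Baire the nef locus is dense, and dense plus closed gives all of $\Delta$. If you wish to keep your two-step scheme, you must either supply the degree bound (for instance by first proving constancy of the nef-value function, which would make the openness step redundant) or switch to this density-plus-closedness argument.
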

In fact, much inspired by the nef-value function method of Wi\'sniewski \cite{Wi91b,Wi09}, we use the minimal model program of projective morphism between complex analytic spaces by N. Nakayama \cite{Na87}, O. Fujino \cite{Fu22} and O. Das--C. Hacon--M. P$\breve{\textrm{a}}$un \cite{DHP24} to obtain that for a relatively semiample line bundle $L$ on $\mathcal{X}$, the \emph{nef locus} of $K_{\mathcal{X}}+L$ 
$$\mathcal{N}(K_{\mathcal{X}}+L):=\{t\in\Delta\ |\ K_{X_t}+L_t\ \text{is nef}\}$$
of adjoint canonical line bundle in a smooth projective family over the unit disk is empty or the whole unit disk.

Combining Theorem \ref{adjoint nef limit} with Siu's remarkable deformation invariance of semipositively twisted plurigenera \cite[Corollary 0.2]{Si02} and Kawamata's characterization of semiampleness of adjoint canonical line bundles \cite[Theorem 6.1]{Ka85}, we get the global deformation stability of semiampleness of adjoint canonical line bundles.
\begin{coro}[= Corollary \ref{K+L}]\label{KL0}
Let $\pi:\mathcal{X}\rightarrow \Delta$ be a smooth projective family and $L$ a $\pi$-semiample line bundle on $\mathcal{X}$. If $K_{X_0}+L_{0}$ is semiample, then for any $t\in\Delta$, $K_{X_t}+L_t$ is semiample and  $K_\mathcal{X}+L$ is thus $\pi$-semiample near $X_t$.
\end{coro}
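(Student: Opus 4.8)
The plan is to combine three inputs: the nefness furnished by Theorem~\ref{adjoint nef limit}, Siu's deformation invariance of semipositively twisted plurigenera, and Kawamata's characterization \cite[Theorem~6.1]{Ka85} that a nef adjoint canonical line bundle $K_X+L$ with $L$ semiample is semiample precisely when it is abundant, i.e.\ $\kappa(X,K_X+L)=\nu(X,K_X+L)$, where $\kappa$ and $\nu$ denote the Iitaka and numerical dimensions. Since semiampleness implies nefness, $K_{X_0}+L_0$ is nef, so by Theorem~\ref{adjoint nef limit} the line bundle $K_{X_t}+L_t$ is nef for every $t\in\Delta$; it therefore suffices to show that both $\kappa(X_t,K_{X_t}+L_t)$ and $\nu(X_t,K_{X_t}+L_t)$ are independent of $t$, for then the equality holding on $X_0$ (by semiampleness there) propagates to all fibres and Kawamata's criterion applies.

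For the numerical dimension, fix a $\pi$-ample line bundle $\mathcal{A}$ on $\mathcal{X}$; for a nef line bundle $\nu(X_t,K_{X_t}+L_t)$ is read off from the intersection numbers $(K_{X_t}+L_t)^k\cdot(\mathcal{A}|_{X_t})^{\,n-k}=c_1(K_{\mathcal{X}}+L)^k\cdot c_1(\mathcal{A})^{\,n-k}\cdot[X_t]$ (with $n$ the fibre dimension), which are topological and hence, all fibres being homologous in $\mathcal{X}$ (as $\Delta$ is contractible), constant in $t$. For the Iitaka dimension I would invoke Siu. Because $L$ is $\pi$-semiample, $mL$ is generated by $\pi^*\pi_*\mathcal{O}_{\mathcal{X}}(mL)$, so---after passing to a relatively compact subdisc of $\Delta$, which is harmless since semiampleness of $K_{X_t}+L_t$ depends only on the fibre $X_t$---finitely many of these sections endow $mL$ with a smooth, hence semipositive, metric. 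Applying \cite[Corollary~0.2]{Si02} with the semipositively metrized twisting bundle $mL$ gives that $t\mapsto h^0\bigl(X_t,\ell K_{X_t}+mL_t\bigr)$ is independent of $t$ for all $\ell\geq 1$; taking $\ell=m$ yields the constancy of $t\mapsto h^0\bigl(X_t,m(K_{X_t}+L_t)\bigr)$ for every $m\geq1$, whence $\kappa(X_t,K_{X_t}+L_t)$ is independent of $t$.

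This already shows that $K_{X_t}+L_t$ is semiample for every $t\in\Delta$. For the relative statement, the constancy of $h^0\bigl(X_t,m(K_{X_t}+L_t)\bigr)$ makes $\pi_*\mathcal{O}_{\mathcal{X}}\bigl(m(K_{\mathcal{X}}+L)\bigr)$ locally free and compatible with base change near any fixed $t$; choosing $m$ with $m(K_{X_t}+L_t)$ base-point-free, the evaluation map $\pi^*\pi_*\mathcal{O}_{\mathcal{X}}\bigl(m(K_{\mathcal{X}}+L)\bigr)\to\mathcal{O}_{\mathcal{X}}\bigl(m(K_{\mathcal{X}}+L)\bigr)$ is surjective along $X_t$, hence---its cokernel being coherent and $\pi$ proper---on a full neighbourhood of $X_t$, i.e.\ $K_{\mathcal{X}}+L$ is $\pi$-semiample near $X_t$.

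The substance lies in Theorem~\ref{adjoint nef limit}; granting it, the remaining care goes into (i) feeding Siu's theorem exactly the twist $mL$ (with its smooth semipositive metric on the total space near the fibre) so as to obtain invariance of the genuine plurigenera $h^0\bigl(X_t,m(K_{X_t}+L_t)\bigr)$ rather than of an unrelated sequence, and (ii) having Kawamata's result available precisely in the adjoint form with $L$ merely semiample, so that the abundance equality $\kappa=\nu$---whose two sides we have argued are separately deformation invariant---is exactly what must be checked. I expect extracting and citing Kawamata's criterion in that form to be the main delicate point.
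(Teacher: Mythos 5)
Your proposal is correct and follows essentially the same route as the paper: nefness of all fibres from Theorem~\ref{adjoint nef limit}, invariance of $\kappa$ via Siu's \cite[Corollary 0.2]{Si02} applied with the semipositive twist $mL$, invariance of $\nu$ for nef classes (the paper cites \cite[$\S$ 2.a of Chapter V]{n04} rather than your direct topological intersection-number computation, but the content is the same), Kawamata's abundance criterion, and then the standard base-change/Nakayama-lemma argument for relative semiampleness. The one point you flag as delicate---having Kawamata's criterion available with $L$ merely semiample---is resolved in the paper by using Cartan's theorem A to globally generate $L^{\otimes m}$ and choosing a general smooth $D\in H^{0}(\mathcal{X},L^{\otimes m})$ via Bertini so that $(X_t,\frac{1}{m}D_t)$ is klt and $K_{X_t}+L_t\sim_{\mathbb{Q}}K_{X_t}+\frac{1}{m}D_t$, which reduces the statement to the log version of \cite[Theorem 6.1]{Ka85}.
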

As a direct application of Corollary \ref{KL0}, one obtains the deformation invariance of generalized plurigenera: 
\begin{theo}[{=Theorem \ref{gmgenus}}]\label{0gmgenus}
Let $\pi:\mathcal{X}\rightarrow \Delta$ be a smooth projective family. If $K_{X_0}$ is semiample, then for any $i \geq 0$ and $m \geq 1$, the generalized  $m$-genus $P^{i}_m(X_t):=\dim_{\mathbb{C}}H^i(X_t,K^{\otimes m}_{X_t})$ is independent of $t\in\Delta$.
\end{theo}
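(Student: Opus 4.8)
The plan is to derive Theorem \ref{gmgenus} from Corollary \ref{KL0} (applied with $L=0$) together with a relative Koll\'ar-type vanishing theorem. Since $\Delta$ is connected, it suffices to show that $t\mapsto P^i_m(X_t)$ is \emph{locally} constant, so we may work near an arbitrary $t_0\in\Delta$. By Corollary \ref{KL0} with $L=0$, the semiampleness of $K_{X_0}$ forces $K_{X_t}$ to be semiample for every $t\in\Delta$ and, moreover, $K_{\mathcal X}$ to be $\pi$-semiample on $\pi^{-1}(U)$ for some neighbourhood $U$ of $t_0$; replacing $\Delta$ by $U$ we may assume $K_{\mathcal X}$ is $\pi$-semiample on all of $\mathcal X$. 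The assertion then reduces to: for every $i\ge0$ and $m\ge1$, the higher direct image $R^i\pi_*(K_{\mathcal X}^{\otimes m})$ is locally free and its formation commutes with base change. Granting this, $P^i_m(X_t)=\operatorname{rank}R^i\pi_*(K_{\mathcal X}^{\otimes m})$ is constant on $\Delta$, which is exactly the claim.

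For $m=1$ this is classical: $R^i\pi_*K_{\mathcal X}=R^i\pi_*\omega_{\mathcal X/\Delta}$ (as $K_\Delta$ is trivial) is a Hodge bundle, hence locally free with base change, and $P^i_1(X_t)$ is a deformation-invariant Hodge number of $X_t$. For $m\ge2$ I would write $K_{\mathcal X}^{\otimes m}=\omega_{\mathcal X/\Delta}\otimes\mathcal M$ with $\mathcal M:=K_{\mathcal X}^{\otimes(m-1)}$, which is again $\pi$-semiample, and invoke a relative Koll\'ar-type vanishing/torsion-freeness statement: for a smooth projective family $\pi:\mathcal X\to\Delta$ and a $\pi$-semiample line bundle $\mathcal M$, the sheaves $R^i\pi_*(\omega_{\mathcal X/\Delta}\otimes\mathcal M)$ are locally free and base-change compatible. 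When $\mathcal M$ is in addition $\pi$-big this is relative Kawamata--Viehweg--Koll\'ar and the higher direct images even vanish for $i>0$; here $\mathcal M$ is only $\pi$-semiample, so the statement is genuinely of Koll\'ar type.

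The main obstacle is establishing this relative vanishing in the present generality, since $\pi$ lives in the complex-analytic category and $\mathcal M$ is only semiample. The strategy I would follow: from $\pi$-semiampleness of $K_{\mathcal X}$ and the analytic MMP of Nakayama, Fujino and Das--Hacon--P\u{a}un quoted above, form (after shrinking) the relative Iitaka/ample model $f:\mathcal X\to Y$ over $\Delta$, with $K_{\mathcal X}^{\otimes r}\cong f^*A$ for some $r\ge1$ and some $p$-ample line bundle $A$ on $Y$, $p:Y\to\Delta$; Siu's deformation invariance of plurigenera \cite{Si02} shows $\pi_*(K_{\mathcal X}^{\otimes kr})$ is locally free with base change, so $Y_t$ is the canonical model of $X_t$ and $f_t$ its Iitaka fibration, with $K_{X_t}^{\otimes r}\cong f_t^*A_t$ and $A_t$ ample. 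One then invokes Koll\'ar's decomposition $Rf_*\omega_{\mathcal X/\Delta}\cong\bigoplus_i(R^if_*\omega_{\mathcal X/\Delta})[-i]$, the flatness over $\Delta$ and base-change compatibility of the pieces $R^if_*\omega_{\mathcal X/\Delta}$, and Koll\'ar's vanishing $H^{>0}(Y_t,R^if_{t*}\omega_{X_t}\otimes(\text{ample}))=0$; these collapse the Leray spectral sequence of $f_t$ and express $P^i_m(X_t)$ as the Euler characteristic of a sheaf on $Y_t$ flat over $\Delta$, hence constant.

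The delicate point, which is where most of the work lies, is that $\mathcal M=K_{\mathcal X}^{\otimes(m-1)}$ need not descend along $f$: it restricts to a torsion, possibly nontrivial, line bundle on the fibres of $f$. One must therefore work with the $\mathbb Q$-Cartier identity $K_{X_t}\equiv f_t^*D_t$ for an ample $\mathbb Q$-divisor $D_t$ on $Y_t$, together with a twisted form of Koll\'ar vanishing (in the spirit of Fujino's analytic vanishing theorems), in order to run the above argument for all residues of $m$ modulo $r$, and not merely for $m\equiv1$; the case $m\equiv1\pmod r$ is the transparent one, where $\omega_{\mathcal X/\Delta}\otimes f^*(A^{\otimes(m-1)/r})$ is literally of the form $\omega_{\mathcal X/\Delta}$ twisted by a pullback from $Y$. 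Once these Koll\'ar-type inputs are available in the analytic, relatively-semiample setting, the rest is bookkeeping with Euler characteristics in flat families over $\Delta$.
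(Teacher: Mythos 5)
Your overall architecture matches the paper's for the first half: both reduce, via Corollary \ref{KL0} with $L=\mathcal{O}_{\mathcal{X}}$, to the situation where $K_{\mathcal{X}}$ is $\pi$-semiample near each fiber, and both identify the key remaining input as local freeness plus base-change compatibility of $R^i\pi_*\mathcal{O}_{\mathcal{X}}(K_{\mathcal{X}}^{\otimes m})$. Where you diverge is in how that input is obtained. The paper simply cites Nakayama's result (\cite[Corollary 3.15]{Na87}, quoted as Proposition \ref{Ri-lf}): if $K_X$ is $\pi$-semiample then $R^i\pi_*\mathcal{O}_X(K_X^{\otimes m})$ is locally free for all $i\geq 0$, $m\geq 1$; it then gets base change essentially for free by a descending induction with Grauert's cohomology-and-base-change criterion (Proposition \ref{cbc}), starting at $i=n+1$ where both sides vanish. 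You instead propose to reprove this local freeness from scratch via the relative Iitaka model, Koll\'ar's decomposition of $Rf_*\omega_{\mathcal{X}/\Delta}$, torsion-freeness and vanishing. That route is essentially the internal mechanism of the cited Nakayama theorem, so you are re-deriving an available black box rather than using it; the payoff is a more self-contained and conceptually transparent argument, at the cost of considerably more work.

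The one substantive caveat: the ``delicate point'' you flag --- that $K_{\mathcal{X}}^{\otimes(m-1)}$ does not descend along the Iitaka fibration and restricts to a possibly nontrivial torsion line bundle on its fibers, so that the residues of $m$ modulo $r$ other than $1$ require a twisted Koll\'ar-type vanishing --- is a genuine and nontrivial gap in your sketch, not mere bookkeeping. It is precisely the content that the citation to \cite[Corollary 3.15]{Na87} supplies. As written, your proof is complete only modulo that input; if you intend the argument to stand alone you must either carry out the twisted vanishing in the analytic relative setting or, more economically, replace that entire paragraph by the citation, after which your remaining steps (local freeness implies base change by descending induction, constant rank implies constant $P^i_m$) agree with the paper's.
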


Applying Kawamata--Viehweg vanishing theorem and Grauert's upper semi-continuity theorem for higher cohomology, we have a slight generalization of \cite[Proposition 3.16]{Ca91}: 
\begin{coro}[=Corollary \ref{min-general}]\label{1.3}
Let $\pi: \mathcal{X}\rightarrow \Delta$ be a smooth K\"ahler family. If the central fiber $X_0$ is a minimal manifold of general type, then all fibers are minimal manifolds of general type.
\end{coro}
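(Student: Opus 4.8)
The plan is to deduce the statement from Theorem~\ref{adjoint nef limit} applied with $L=0$, once one knows that bigness of $K_{X_0}$ propagates to every fiber. Recall that ``$X_0$ is minimal of general type'' means exactly that $K_{X_0}$ is nef and $(K_{X_0})^n>0$, where $n=\dim X_0$; since $K_{X_0}$ is big, $X_0$ is Moishezon, and being a fiber of a smooth K\"ahler family it is K\"ahler, hence projective. On the projective manifold $X_0$ the Kawamata--Viehweg vanishing theorem gives $H^i(X_0,mK_{X_0})=0$ for all $i>0$ and $m\ge 2$, because $mK_{X_0}-K_{X_0}=(m-1)K_{X_0}$ is nef and big. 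By Grauert's upper semicontinuity theorem, for each fixed $m\ge 2$ the locus $\{t\in\Delta:h^i(X_t,mK_{X_t})\ge 1\}$ is a closed analytic subset of $\Delta$ not containing $0$; as $\dim\Delta=1$ it is therefore discrete, and the union $D_m$ over $1\le i\le n$ of these loci is a discrete subset of $\Delta$ off which $H^i(X_t,mK_{X_t})=0$ for all $i>0$.

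Next I would invoke constancy of the Euler characteristic: $\mathcal{O}_{\mathcal X}(mK_{\mathcal X})$ is locally free and $K_{\mathcal X}|_{X_t}=K_{X_t}$ (since $\pi$ is smooth and $K_\Delta$ is trivial), so $t\mapsto\chi(X_t,mK_{X_t})$ is locally constant, hence constant on the connected disk $\Delta$ and equal to $\chi(X_0,mK_{X_0})=P_m(X_0)$ for $m\ge 2$. Thus $P_m(X_t)=P_m(X_0)$ for $t\notin D_m$, while for $t\in D_m$ upper semicontinuity of $h^0$ gives $P_m(X_t)\ge\limsup_{s\to t,\ s\notin D_m}P_m(X_s)=P_m(X_0)$. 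Hence $P_m(X_t)\ge P_m(X_0)$ for every $t\in\Delta$ and every $m\ge 2$; since $P_m(X_0)=\tfrac{(K_{X_0})^n}{n!}m^n+O(m^{n-1})$, this yields $\operatorname{vol}(X_t,K_{X_t})\ge(K_{X_0})^n>0$, so every fiber $X_t$ is of general type --- in particular Moishezon, and, being a fiber of a K\"ahler family, K\"ahler, hence projective.

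Finally, to obtain nefness of $K_{X_t}$ for all $t$, consider $S:=\{t\in\Delta:K_{X_t}\ \text{is nef}\}\ni 0$. Given any $t_0\in\Delta$, choose a small coordinate disk $\Delta'\ni t_0$; since every fiber of $\mathcal X|_{\Delta'}\to\Delta'$ is projective, this is a smooth projective family over a disk, so Theorem~\ref{adjoint nef limit} with $L=0$ forces its nef locus $\mathcal N(K_{\mathcal X|_{\Delta'}})=\{t\in\Delta':K_{X_t}\ \text{nef}\}$ to be empty or all of $\Delta'$. Taking $t_0=0$ shows $S$ is a neighbourhood of $0$, taking $t_0\in S$ shows $S$ is open, and taking $t_0\in\overline S$ --- so that $\Delta'\cap S\neq\emptyset$, hence $\mathcal N(K_{\mathcal X|_{\Delta'}})\neq\emptyset$ --- shows $S$ is closed. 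As $\Delta$ is connected, $S=\Delta$, and combined with the bigness proved above, every $X_t$ is a minimal manifold of general type. (If desired, Corollary~\ref{KL0} with $L=0$, noting $K_{X_0}$ is semiample by the basepoint-free theorem, further upgrades ``nef'' to ``semiample'' on each fiber.)

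The step I expect to be the main obstacle is the passage between the K\"ahler and projective worlds: Theorem~\ref{adjoint nef limit} and the analytic minimal model program it rests on are phrased for projective morphisms, so one must argue that a smooth K\"ahler family all of whose fibers are projective is, at least over a small disk, a smooth projective family --- for instance via finite generation of the relative canonical ring and the resulting relative canonical model. This is precisely why forcing all fibers to be of general type first (hence projective) is the crucial move; the remaining ingredients (Kawamata--Viehweg vanishing, Grauert's theorem, constancy of $\chi$, and Moishezon $+$ K\"ahler $=$ projective) are standard.
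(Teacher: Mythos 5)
Your overall strategy is close to the paper's, and the first half is fine: the paper likewise passes through Kawamata--Viehweg vanishing on $X_0$, Grauert semicontinuity, and constancy of $\chi(X_t,mK_{X_t})$ to propagate $P_m$. Your treatment of the exceptional locus $D_m$ is actually a nice, more elementary variant: you recover bigness of $K_{X_t}$ on \emph{every} fiber from $P_m(X_t)\ge P_m(X_0)$ (upper semicontinuity of $h^0$ at the discrete bad points) plus the asymptotic $P_m(X_0)\sim \frac{(K_{X_0})^n}{n!}m^n$, whereas the paper only gets general type on the uncountable set $\bigcap_m U_m$ directly and then invokes the bigness-extension result \cite[Corollary 4.3]{RT21} to cover the remaining fibers. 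Both work.

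The genuine gap is the step you yourself flag: to apply Theorem \ref{adjoint nef limit} over a small disk $\Delta'$ you need $\pi|_{\pi^{-1}(\Delta')}$ to be a \emph{projective morphism}, and knowing that every fiber $X_t$ is projective does not give this --- fiberwise projectivity of a morphism is strictly weaker than projectivity of the morphism (the paper's own remark after Lemma \ref{projectivity} makes this point). Your suggested repair via finite generation of the relative canonical ring does not close the gap either: the relative canonical model of $\mathcal{X}|_{\Delta'}$ would be projective over $\Delta'$, but that says nothing about projectivity of $\mathcal{X}|_{\Delta'}\rightarrow\Delta'$ itself, and in any case you cannot yet identify $\mathcal{X}|_{\Delta'}$ with its canonical model since minimality of the fibers $X_t$, $t\neq 0$, is exactly what is being proved. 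The paper's route is: uncountably many fibers of general type force the \emph{family} to be Moishezon via the bimeromorphic embedding (Proposition \ref{be}), and then the Claudon--H\"oring relative projectivity criterion (Lemma \ref{projective criteria}) upgrades the K\"ahler Moishezon morphism to a projective morphism over neighborhoods $U_{r,\epsilon_r}$ of the Stein compacta $W_r$; Theorem \ref{pair nef deformation} with $L=\mathcal{O}$ then gives nefness of $K_{X_t}$ on each $U_{r,\epsilon_r}$ (these all contain $0$, so no open--closed argument is needed), and letting $r\rightarrow 1$ finishes. With Lemma \ref{projective criteria} substituted for your canonical-ring suggestion, your argument goes through.
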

Recall that if there is a smooth K\"ahler form $\omega$ on $\mathcal{X}$, i.e., $\omega$ is a smooth $d$-closed positive $(1,1)$-form, then the smooth family $\pi:\mathcal{X}\rightarrow \Delta$ is called a \emph{smooth K\"ahler family} here. 

We also improve the first author--X. Liu's recent deformation rigidity \cite[Theorem 1.2]{LL24} of projective manifolds with semiample canonical line bundles. 

Set the \emph{$S$-locus} of the family $\pi:\mathcal{X}\rightarrow B$ over the base $B$ as
$$\mathcal{S}:=\{t\in B: X_t\cong S\}.$$
\begin{theo}[=Theorem \ref{4rigidity}]\label{4rigidity0}
Let $\pi:\mathcal{X}\rightarrow\Delta$ be a smooth K\"ahler family, and $S$ a projective manifold with the  semiample canonical line bundle. Then $\mathcal{S}$ 
is either at most a discrete subset of $\Delta$ or the whole $\Delta$. 
\end{theo}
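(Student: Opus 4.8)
The plan is to run a connectedness argument on the accumulation points of $\mathcal{S}$. If $\mathcal{S}$ has no accumulation point in $\Delta$, it is closed and discrete, which is the first alternative; so assume $\mathcal{S}$ has an accumulation point, and let $\mathcal{S}'\subseteq\Delta$ be its derived set, which is nonempty and (being a derived set in a metric space) closed. I claim that each $t_0\in\mathcal{S}'$ admits a neighbourhood contained in $\mathcal{S}$; granting this \emph{local statement}, $\mathcal{S}'$ is also open, hence $\mathcal{S}'=\Delta$ by connectedness, and then $\mathcal{S}=\Delta$. It therefore suffices to prove the local statement, so after recentering we take $t_0=0$, fix a sequence $t_n\to 0$ in $\mathcal{S}\setminus\{0\}$, and must show $X_t\cong S$ for all $t$ near $0$.

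The first and hardest step is to identify the central fibre: $X_0\cong S$. Since $X_{t_n}\cong S$ for all $n$, every $H^i(X_{t_n},mK_{X_{t_n}})\cong H^i(S,mK_S)$, so all generalized $m$-genera agree with those of $S$ along $\{t_n\}$; moreover $\chi(X_t,mK_{X_t})$ is locally constant on $\Delta$ by flatness, whence $\chi(X_0,mK_{X_0})=\chi(S,mK_S)$ for every $m$, and the Hodge numbers satisfy $h^{p,q}(X_0)=h^{p,q}(S)$ by the deformation invariance of Hodge numbers in a K\"ahler family. Feeding these identities into Grauert's upper semicontinuity of $t\mapsto\dim H^i(X_t,mK_{X_t})$, together with the structure of the canonical fibration of $S$ (where Kawamata--Viehweg controls the higher cohomology of $mK_S$, as in Corollary \ref{1.3}), one is forced to conclude that $K_{X_0}$ is semiample; in particular $X_0$ is Moishezon and, being K\"ahler, projective. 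At this point the first author--X. Liu's deformation rigidity \cite[Theorem 1.2]{LL24} of projective manifolds with semiample canonical bundle applies and yields $X_0\cong S$.

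Now $X_0\cong S$ is projective with semiample canonical bundle, so I can run the tools of the first part of the paper in a neighbourhood of $0$: Corollary \ref{KL0} (with $L=0$) gives that $K_{X_t}$ is semiample for all $t$ near $0$ and $K_{\mathcal{X}}$ is $\pi$-semiample there, and Theorem \ref{0gmgenus} makes every generalized $m$-genus $P^i_m(X_t)$ constant near $0$, so $P^i_m(X_t)=P^i_m(S)$ and likewise $h^{p,q}(X_t)=h^{p,q}(S)$ near $0$. Shrinking $\Delta$, the relative canonical morphism over $\Delta$ is then a smooth family of fibrations specializing to the canonical morphism $S\to S_{\mathrm{can}}$, with base a flat family whose central fibre is $S_{\mathrm{can}}$ and with fibres of numerically trivial canonical bundle. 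Finally I would view the family near $0$ as a deformation of $X_0\cong S$, classified by a holomorphic germ $f\colon(\Delta,0)\to(\mathrm{Kur}(S),0)$ into the Kuranishi space of $S$: the set of $t$ with $X_t\cong S$ is $f^{-1}$ of the $\mathrm{Aut}(S)$-orbit of the base point, an analytic subset near $0$; since this set accumulates at $0$ and $\dim_{\mathbb{C}}\Delta=1$, it must contain a whole neighbourhood of $0$, which is the local statement.

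The crux is the second paragraph, and within it the non-projectivity obstruction: a priori the K\"ahler limit $X_0$ of projective fibres isomorphic to $S$ need not itself be projective, so none of the projective-family theorems (Theorem \ref{adjoint nef limit}, Corollary \ref{KL0}) nor \cite[Theorem 1.2]{LL24} can be invoked at $0$ until one has independently shown that $K_{X_0}$ is semiample. Pinning this down from the cohomological constancy ($\chi(X_t,mK_{X_t})$ and the Hodge numbers), Grauert semicontinuity, and the semiample structure of $K_S$ — and then matching versal data to upgrade ``equal invariants'' to ``isomorphic'' while handling a possibly positive-dimensional or infinite $\mathrm{Aut}(S)$ in the Kuranishi step — is where the real work lies; by contrast the topological reduction and the propagation of semiampleness via Corollary \ref{KL0} and Theorem \ref{0gmgenus} are comparatively routine.
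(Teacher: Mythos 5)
Your overall topological scheme (reduce to a local statement at an accumulation point of $\mathcal{S}$, then conclude by connectedness) matches the paper's, but the core of your second paragraph has genuine gaps. First, the step ``feeding $\chi(X_t,mK_{X_t})$, the Hodge numbers and Grauert semicontinuity into the structure of the canonical fibration of $S$, one is forced to conclude that $K_{X_0}$ is semiample'' is not an argument: semiampleness cannot be read off from dimensions of cohomology groups, and in a merely K\"ahler family you do not even know that $h^0(X_0,mK_{X_0})=h^0(S,mK_S)$ (Siu's invariance of plurigenera is a projective-family statement). Second, the inference ``$K_{X_0}$ semiample, in particular $X_0$ Moishezon'' is false: $K_S$ is only assumed semiample, so it may have Kodaira dimension $0$ (e.g.\ trivial), and a K\"ahler manifold with trivial canonical bundle need not be Moishezon. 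Third, \cite[Theorem 1.2]{LL24} requires \emph{all} fibers $X_t$ with $t\neq0$ to be isomorphic to $S$, whereas you only have a sequence $t_n\to0$, so it cannot be invoked to identify $X_0$. What your proposal is missing is the paper's key construction (Lemma \ref{projectivity}): using the exponential sequence, base change and the local freeness of $R^2\pi_*\mathcal{O}_{\mathcal{X}}$, one transports the ample class of $S$ along the fibers over $E$ to produce a \emph{global} line bundle $H$ on $\mathcal{X}$ that is ample on every $X_t$, $t\in E$. Zariski openness of ampleness and the bimeromorphic embedding then make $\pi$ Moishezon, Claudon--H\"oring's criterion (Lemma \ref{projective criteria}) makes it projective over relatively compact subdisks, and only then do Remark \ref{semiample open} and the projective-family machinery become available.

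Your closing Kuranishi step is also not rigorous: the $\mathrm{Aut}(S)$-orbit of the base point in the Kuranishi space need not be an analytic subset (orbits of a non-compact automorphism group can fail to be closed), so ``$f^{-1}$ of the orbit is analytic near $0$'' is unjustified --- and the analyticity of $\{t\ |\ X_t\cong S\}$ is essentially the point at issue. The paper avoids this by mapping the (now polarized, projective) family to Viehweg's coarse \emph{separated} moduli space $P_h$ of polarized manifolds with semiample canonical bundles; the fiber $\mu^{-1}(o)$ over the point $o$ representing $(S,H|_S)$ is then a closed analytic subset of the disk with an accumulation point, hence the whole disk. The polarization needed for this moduli argument is again the line bundle $H$ from Lemma \ref{projectivity}, so that construction is indispensable at both stages of the proof.
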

\cite[Example 1.2]{lrw} presents a smooth family of genus $g\ge2$ curves which contains countably (but not finitely) many isomorphic fibers. So there indeed exists a smooth projective family with discretely (but not finitely) many isomorphic fibers.

As a direct application of Theorem \ref{4rigidity0}, one has: 
\begin{coro}\label{1.5}
Let $\pi: \mathcal{X}\rightarrow Y$ be a smooth K\"ahler family over a smooth algebraic curve $Y$, and $S$ a projective manifold with the semiample canonical line bundle. Then  $\mathcal{S}$ 
is either at most finite or the whole $Y$.
\end{coro}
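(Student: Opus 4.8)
The plan is to localize Theorem~\ref{4rigidity0} over small analytic disks in $Y$ and then globalize by a connectedness argument. Note first that $Y$, being a smooth algebraic curve, is connected. For every point $y\in Y$ fix a holomorphic coordinate embedding $\iota_y\colon\Delta\hookrightarrow Y$ centered at $y$; then $\pi^{-1}(\iota_y(\Delta))\to\Delta$ is a smooth family over the unit disk, and it is again Kähler since the restriction of a Kähler form on $\mathcal{X}$ to an open subset remains Kähler. By Theorem~\ref{4rigidity0}, the set $\mathcal{S}\cap\iota_y(\Delta)$ is either at most a discrete subset of $\Delta$ or all of $\Delta$.

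Next I would consider the locus
\[
W:=\{\,y\in Y:\ \mathcal{S}\ \text{contains an open neighborhood of}\ y\ \text{in}\ Y\,\},
\]
which is open by definition. It is also closed: if $y\in\overline{W}$, choose a coordinate disk $\Delta$ around $y$ as above; then $\mathcal{S}\cap\Delta$ meets $W$, hence contains a nonempty open subset of $\Delta$, so it is not discrete and must equal $\Delta$ by the dichotomy above, whence $y\in W$. Thus $W$ is clopen, so by connectedness of $Y$ either $W=Y$, in which case $\mathcal{S}=Y$ and we are done, or $W=\emptyset$.

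So assume $W=\emptyset$. Then for every coordinate disk $\Delta$ the set $\mathcal{S}\cap\Delta$ is discrete (it cannot be all of $\Delta$, as that would force the corresponding center into $W$), and $\mathcal{S}$ has no accumulation point in $Y$ (such a point would make the corresponding $\mathcal{S}\cap\Delta$ non-discrete). Hence $\mathcal{S}$ is a closed, discrete subset of $Y$. To upgrade this to \emph{finiteness}, I would invoke that $\mathcal{S}$ is a constructible subset of $Y$ --- it is the image in $Y$ of the relative isomorphism space of $\mathcal{X}$ and the constant family $S\times Y$ (or an analytic substitute thereof) --- so that $\mathcal{S}$ is either finite or cofinite in the irreducible curve $Y$; a cofinite subset contains coordinate disks and so is not discrete, leaving $\mathcal{S}$ finite.

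The local dichotomy and the clopen argument are formal consequences of Theorem~\ref{4rigidity0}; the step carrying genuine content is the final passage from ``closed and discrete'' to ``finite'', i.e.\ the algebraicity of the $S$-locus, which rules out an infinite discrete $S$-locus accumulating only at the punctures of a projective completion $\overline{Y}$ of $Y$. (When $Y$ is itself projective this step is automatic, a discrete subset of a compact Riemann surface being finite.)
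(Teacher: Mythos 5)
Your localization of Theorem~\ref{4rigidity0} to coordinate disks and the clopen argument for the set $W$ are exactly the intended ``direct application'': they show that $\mathcal{S}$ is either the whole of $Y$ or a closed discrete subset of $Y$, and this part of your proof is correct (the paper offers nothing beyond the assertion that the corollary follows directly from Theorem~\ref{4rigidity0}). The step that does not hold up as written is the passage from ``closed and discrete'' to ``finite'' via constructibility. For a merely K\"ahler, not necessarily algebraic, family, the relative isomorphism space $\mathrm{Isom}_Y(\mathcal{X}, S\times Y)$ is only known (via the relative Douady/Barlet space of graphs and the Fujiki--Lieberman properness theory for K\"ahler morphisms) to be a \emph{countable} union of components, each proper over $Y$; its image is therefore a countable union of analytic subsets of $Y$, each of which is finite or all of $Y$, and a countable union of finite sets in a non-compact curve can perfectly well be an infinite discrete set. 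So constructibility in the sense you need --- a \emph{finite} union of locally closed subsets --- is not established, and your argument does not rule out an infinite discrete $\mathcal{S}$ accumulating only at the punctures of a compactification $\overline{Y}$ when $Y$ is affine. Note that the paper's own Example \cite[Example 1.2]{lrw} exhibits precisely such an infinite discrete $S$-locus over the disk, so some genuinely global input is required here. When $Y$ is projective --- the reading consistent with the paper's subsequent use of the corollary in Corollaries~\ref{1.6} and~\ref{1.7}, and the case you yourself flag as automatic --- finiteness is immediate, since a closed discrete subset of a compact Riemann surface is finite; in that case your proof is complete and coincides with the intended one.
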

In particular, using Corollary \ref{1.3}, E. Viehweg--K. Zuo's minimal number \cite[Theorem 0.1]{vz} of singular fibers in a family and the first author--X. Liu's isotriviality \cite[Theorem 7.1]{LL24}, one obtains: 
\begin{coro}[=Corollary \ref{1.6'}]\label{1.6}
Let $\pi: \mathcal{X}\rightarrow Y$ be a smooth K\"ahler family over $Y$, where $Y$ is isomorphic to $\mathbb{P}^1$ or an elliptic curve. Let $S$ be a projective manifold with the big and nef canonical line bundle. Then $\mathcal{S}$ 
is either empty or the whole $Y$. 
\end{coro}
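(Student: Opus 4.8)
The plan is to show that a nonempty $\mathcal{S}$ must be all of $Y$, by upgrading the information carried by a single fibre $X_{t_0}\cong S$ (with $t_0\in\mathcal{S}$) in successive stages: from $X_{t_0}$ being a minimal manifold of general type, to every fibre being such, to the family having zero variation, to the family being a holomorphic fibre bundle. Since $K_S$ is big and nef, $S$ -- hence $X_{t_0}$ -- is a minimal manifold of general type. I would first show that $V:=\{t\in Y:\ X_t\ \text{is a minimal manifold of general type}\}$ is open and closed in $Y$: given $t\in V$ (resp.\ $t\in\overline{V}$), restrict $\pi$ to a disk $\Delta'\subset Y$ centred at $t$ (resp.\ at a point of $V\cap\Delta'$) to obtain a smooth K\"ahler family over the unit disk whose central fibre is a minimal manifold of general type, and apply Corollary \ref{1.3} to conclude that every fibre over $\Delta'$ lies in $V$. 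As $Y$ is connected and $t_0\in V$, this forces $V=Y$; so every $X_t$ is a minimal manifold of general type, in particular projective with $K_{X_t}$ nef and big, and therefore $K_{X_t}$ is semiample by the base-point-free theorem.

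Next I would invoke E.\ Viehweg--K.\ Zuo's bound on the number of singular fibres \cite[Theorem 0.1]{vz}: a non-isotrivial smooth projective family over a smooth projective curve $Y$ whose fibres carry a semiample canonical line bundle must, writing $D\subset Y$ for its discriminant, satisfy $2g(Y)-2+\#D>0$ -- equivalently, it has at least $3$ singular fibres when $Y\cong\mathbb{P}^1$ and at least one when $Y$ is an elliptic curve. Our $\pi$ is smooth, so $D=\varnothing$ and this inequality fails in both of our cases; hence $\pi$ is isotrivial, $\operatorname{Var}(\pi)=0$. Finally, since every $X_t$ has semiample canonical line bundle, the first author--X.\ Liu's isotriviality theorem \cite[Theorem 7.1]{LL24} upgrades $\operatorname{Var}(\pi)=0$ to the conclusion that $\pi$ is a holomorphic fibre bundle; thus $X_t\cong X_{t_0}\cong S$ for all $t\in Y$, i.e.\ $\mathcal{S}=Y$. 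Together with the trivial case $\mathcal{S}=\varnothing$ this gives the dichotomy. (One may equally route the argument through Corollary \ref{1.5}, which already reduces the claim to ruling out a finite nonempty $\mathcal{S}$; the computation above then supplies the contradiction.)

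The step needing the most care is matching our family to the literal hypotheses of \cite[Theorem 0.1]{vz} and \cite[Theorem 7.1]{LL24}. If \cite[Theorem 0.1]{vz} is stated only for canonically polarized fibres, I would instead apply it to the relative canonical model $\mathcal{X}\to\mathcal{X}^{\mathrm{can}}\to Y$, a flat projective family of canonically polarized varieties with canonical singularities; this exists globally over $Y$ because $K_{X_t}$ is semiample for every $t$ and, by Theorem \ref{0gmgenus}, the plurigenera are constant, so $\pi_{*}(mK_{\mathcal{X}/Y})$ is locally free and compatible with base change and the relative $\operatorname{Proj}$ glues. One then has to check that the smoothness of $\pi$ still forces the discriminant of $\mathcal{X}^{\mathrm{can}}\to Y$ in the sense of \cite{vz} to be empty, and that $\operatorname{Var}$ is unchanged under passage to canonical models. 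For \cite[Theorem 7.1]{LL24} one must confirm it is valid in the K\"ahler (not merely projective) category and that its conclusion is honest local triviality of $\pi$ rather than a weaker birational or generic statement. These are verification tasks rather than genuinely new difficulties, but they are where the proof must be written carefully.
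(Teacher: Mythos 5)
Your overall route coincides with the paper's: propagate ``minimal of general type'' to all fibers via Corollary \ref{1.3} and connectedness of $Y$, observe that smoothness of $\pi$ leaves no room for the singular fibers that Viehweg--Zuo's Theorem \ref{vz} would force on a non-(birationally-)isotrivial family, and then upgrade birational isotriviality to genuine isotriviality with \cite[Theorem 7.1]{LL24}. The paper's proof of Corollary \ref{1.6'} is exactly this chain.

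There is, however, one concrete step you elide. Theorem \ref{vz} requires $\mathcal{X}$ to be a projective manifold and $\pi$ a (projective) morphism, whereas the hypothesis only gives a smooth K\"ahler family; knowing that every fiber $X_t$ is projective is \emph{not} enough, since fiberwise projectivity does not imply projectivity of the morphism (the paper itself points to Hopf surfaces fibered over $\mathbb{P}^1$ as a counterexample). Your proposal silently reclassifies $\pi$ as ``a smooth projective family'' before invoking \cite[Theorem 0.1]{vz}, and your list of ``verification tasks'' addresses the polarization hypothesis on the fibers but not this global projectivity of the total space. The paper closes this gap by quoting Claudon--H\"oring's projectivity criterion for K\"ahler morphisms \cite[1.1 Theorem]{CH24}: once all fibers are minimal of general type, $\pi$ is a projective morphism over the compact base $Y$, hence $\mathcal{X}$ is projective, and only then is Theorem \ref{vz} applicable. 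Your proposed detour through the relative canonical model does not substitute for this, because constructing and comparing $\mathcal{X}^{\mathrm{can}}\to Y$ still presupposes enough global projectivity (or at least Moishezon-ness plus a projectivity criterion) to run the relative $\operatorname{Proj}$ and to speak of a discriminant in the sense of \cite{vz}. With the Claudon--H\"oring step inserted, your argument matches the paper's.
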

Upon closer examination, we are able to establish a more general rigidity theorem.
\begin{theo}[=Theorem \ref{4.12}]\label{1.7}
Let $\pi: \mathcal{X}\rightarrow Y$ be a smooth projective family over $Y$, where $Y$ is isomorphic to $\mathbb{P}^1$ or an elliptic curve. Let $S$ be a projective manifold with the semiample canonical line bundle. Then  $\mathcal{S}$ 
is either empty or the whole $Y$. 
\end{theo}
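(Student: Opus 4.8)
The plan is to show that the presence of a single fibre isomorphic to $S$ forces the whole family to be isotrivial. Assume $\mathcal{S}\neq\emptyset$ and fix $t_0\in Y$ with $X_{t_0}\cong S$; by the hypothesis on $S$, the canonical line bundle $K_{X_{t_0}}$ is semiample.

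The first step is to upgrade this to: \emph{every} fibre of $\pi$ has a semiample canonical line bundle. Set $U:=\{t\in Y:K_{X_t}\text{ is semiample}\}\ni t_0$. For openness, around any point of $U$ choose a coordinate disc $D\subset Y$ centred there; since $\pi$ is a projective morphism, $\mathcal{X}|_D\to D$ is a smooth projective family, and Corollary \ref{KL0} with $L=0$ gives $D\subset U$. For closedness, if $t_n\to t$ with $t_n\in U$, take a small coordinate disc about $t$ and recentre it at some $t_n$ lying inside it; Corollary \ref{KL0} applied there yields $t\in U$. As $Y$ is connected, $U=Y$. Along the way, Theorem \ref{0gmgenus} applied on each such disc shows that the pluricanonical dimensions $h^0(X_t,mK_{X_t})$, hence the relative Iitaka dimension, are independent of $t$.

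The second step is to invoke the isotriviality theorem of the first author and X. Liu \cite[Theorem 7.1]{LL24}: we now have a smooth projective family over $Y\cong\mathbb{P}^1$ or an elliptic curve all of whose fibres carry a semiample canonical line bundle, so that theorem applies and shows $\pi$ is isotrivial, $X_t\cong X_{t'}$ for all $t,t'\in Y$. Hence $X_t\cong X_{t_0}\cong S$ for every $t$, i.e.\ $\mathcal{S}=Y$; together with the vacuous possibility $\mathcal{S}=\emptyset$ this is the asserted dichotomy. (Alternatively one may first invoke Corollary \ref{1.5}, available since the total space $\mathcal{X}$ is projective, to reduce matters to ruling out $\mathcal{S}$ finite and nonempty.)

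The genuinely new ingredient is the propagation of semiampleness in the first step, i.e.\ Corollary \ref{KL0}, which itself rests on Theorem \ref{adjoint nef limit} (proved through the relative minimal model program), Siu's invariance of twisted plurigenera and Kawamata's semiampleness criterion; this is exactly what lets us weaken the hypothesis from \emph{big and nef} (as in Corollary \ref{1.6}) to \emph{semiample}. The main obstacle then lies inside \cite[Theorem 7.1]{LL24}, in the range where $K_{X_t}$ is semiample but not big: the fibres are no longer of general type, so one cannot apply minimal-number-of-singular-fibre or hyperbolicity-of-moduli arguments directly. The natural remedy is to descend to the relative canonical model $\mathcal{Z}\to Y$ — a family of canonically polarized varieties with canonical singularities, whose formation commutes with base change precisely because the pluricanonical sections are constant in $t$ — apply Viehweg--Zuo's input \cite[Theorem 0.1]{vz} (valid over $\mathbb{P}^1$ and over elliptic curves, where a family with no singular fibres must be isotrivial) to $\mathcal{Z}\to Y$, and finally lift isotriviality of the canonical models back to the $X_t$ along the relative Iitaka fibration $\mathcal{X}\to\mathcal{Z}$; it is in this last lifting step that the smallness of $Y$, equivalently of $\pi_1(Y)$, is used in an essential way.
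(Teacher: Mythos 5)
Your proposal follows essentially the same route as the paper: propagate semiampleness of $K_{X_t}$ to every fibre via Corollary \ref{KL0} (this is exactly the paper's Remark \ref{semiample open}), observe that projectivity of $\pi$ is now a hypothesis rather than something to be extracted from bigness of $K_S$, and conclude isotriviality from Viehweg--Zuo combined with \cite[Theorem 7.1]{LL24}. The one point to adjust is the order and locus of the last step: the paper applies Viehweg--Zuo's Theorem \ref{vz} \emph{directly} to the smooth family $\pi$ (condition b) there holds with $F'=F$, since each fibre is itself minimal with semiample canonical bundle), so that the absence of singular fibres over $\mathbb{P}^1$ or an elliptic curve forces birational isotriviality, and \cite[Theorem 7.1]{LL24} is then invoked only as the black box upgrading birational isotriviality to isotriviality for good minimal fibres --- in particular there is no need for your proposed descent to a relative canonical model, which would moreover drop fibre dimension (and leave a nontrivial lifting problem) whenever $K_{X_t}$ is semiample but not big.
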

This can also be deduced from Corollary \ref{1.5} and \cite[Theorem A]{d22} or \cite[Theorem B]{DLSZ24}.

\subsection{Deformation of nefness for K\"ahler families} 
H.-Y. Lin \cite[Theorem 1.1]{Lin24} proves that every $3$-dimensional compact K\"ahler manifold $X$ has an algebraic approximation, i.e., there is a smooth family $\mathcal{X}\rightarrow \Delta$ of threefolds with central fiber $X_0=X$ and there is a sequence of points $\{t_{\mu}\}\rightarrow0$ as $\mu\rightarrow\infty$ such that $X_{t_{\mu}}$ are projective for all large $\mu$. And \cite[Theorem 4.1]{Pe98} by Peternell implies that if $K_{X_0}$ is not nef, then $K_{X_{t{\mu}}}$ is not nef for $\mu$ large enough. This motivates the following question.
\begin{quest}\label{kahler nef conj}
Let $\pi:\mathcal{X}\rightarrow \Delta$ be a smooth K\"ahler family of $n$-folds. If $K_{X_{0}}$ is not nef, is it true that $K_{X_{t}}$ is not nef for any sufficiently small $t$?
\end{quest}
See also \cite[Problem 3.14]{CP99}, where $\pi$ is assumed to be a smooth family of compact K\"ahler manifolds.  
As the other main theorem of this paper, much inspired by Andreatta--Peternell's deformation theoretical approach  \cite{AP97}, we use the classification of divisorial contractions of $3$-dimensional compact K\"ahler manifolds \cite[Main Theorem]{Pe98} and the characterization of the nefness of the canonical line bundle to answer Question \ref{kahler nef conj} in the case of $n\leq 3$.

\begin{theo}[= Theorem \ref{Kahler nef open}]\label{kahler}
Let $\pi:\mathcal{X}\rightarrow \Delta$ be a smooth K\"ahler family of $n$-folds with $n\leq 3$. If $K_{X_{0}}$ is not nef, then no $K_{X_{t}}$  is nef for any sufficiently small $t$.
\end{theo}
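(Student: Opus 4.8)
The plan is to adapt the deformation-theoretic strategy of Andreatta--Peternell \cite{AP97}: if $K_{X_0}$ is not nef, one ``propagates'' a $K_{X_0}$-negative extremal contraction of $X_0$ to the neighbouring fibers and thereby produces, on each $X_t$ with $t$ near $0$, a curve of negative intersection with $K_{X_t}$. The case $n=1$ is trivial (the genus is constant in the family), and for $n=2$ the argument below goes through verbatim using a $(-1)$-curve in place of an exceptional divisor, so I concentrate on $n=3$. Since $\mathcal{X}$ carries a K\"ahler form, every fiber is a compact K\"ahler threefold; hence, by the cone theorem for such threefolds (equivalently, by the characterization of nefness of $K_{X_0}$ recalled above), the failure of $K_{X_0}$ to be nef gives a $K_{X_0}$-negative extremal ray $R_0\subset\overline{NE}(X_0)$, generated by a rational curve $\ell_0$ with $0<-K_{X_0}\cdot\ell_0\le 4$, together with its contraction $\varphi_0\colon X_0\to Y_0$ (no small contraction occurs on a smooth threefold). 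Using $N_{X_0/\mathcal{X}}\cong\mathcal{O}_{X_0}$, it suffices to produce, for each $t$ near $0$, a genus-$0$ stable map into $X_t$ of anticanonical degree $-K_{X_0}\cdot\ell_0>0$: then some component $C_t$ of its image satisfies $K_{X_t}\cdot C_t<0$.

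\emph{Fiber-type case.} If $\dim Y_0<3$, take $\ell_0$ through a general point, so that $\ell_0$ is a free rational curve lying in a general (smooth Fano) fiber $F_0$ of $\varphi_0$. As $F_0$ has trivial normal bundle in $X_0$, freeness in $F_0$ forces $H^1(\ell_0,\ell_0^*T_{X_0})=0$, and then the sequence $0\to\ell_0^*T_{X_0}\to\ell_0^*T_{\mathcal{X}}\to\mathcal{O}_{\mathbb{P}^1}\to 0$ gives $H^1(\ell_0,\ell_0^*T_{\mathcal{X}})=0$ as well. Since a holomorphic map $\mathbb{P}^1\to\Delta$ is constant, every stable genus-$0$ map to $\mathcal{X}$ lands in a single fiber, so $\overline{M}_{0,0}(\mathcal{X},[\ell_0])$ is smooth at $[\ell_0]$ of dimension $-K_{X_0}\cdot\ell_0+n-2$, while its closed sublocus of maps into $X_0$ (the fiber over $0$) is smooth of dimension $-K_{X_0}\cdot\ell_0+n-3$, exactly one less; hence the morphism $\overline{M}_{0,0}(\mathcal{X},[\ell_0])\to\Delta$ is submersive at $[\ell_0]$, in particular open there, so its image covers a neighbourhood of $0$ and we obtain the required stable maps into the nearby fibers.

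\emph{Divisorial case.} Here I invoke Peternell's classification \cite[Main Theorem]{Pe98} of the exceptional prime divisor $E_0$ of $\varphi_0$ (this is the ingredient that lets us dispense with the hypothesis on extremal contractions in \cite{AP97}): $(E_0,N_{E_0/X_0})$ is a $\mathbb{P}^1$-bundle (or conic bundle) over a smooth curve with fiberwise degree $-1$, or $\mathbb{P}^2$ with normal bundle $\mathcal{O}(-1)$ or $\mathcal{O}(-2)$, or a (possibly singular) quadric surface with normal bundle $\mathcal{O}(-1)$; in each case $\ell_0\subset E_0$ has $-K_{X_0}\cdot\ell_0\in\{1,2\}$. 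When $E_0$ fibers over a curve, $\ell_0$ is a ruling with $N_{\ell_0/X_0}\cong\mathcal{O}_{\mathbb{P}^1}\oplus\mathcal{O}_{\mathbb{P}^1}(-1)$, so $H^1(\ell_0,N_{\ell_0/\mathcal{X}})=0$ and $\ell_0$ acquires horizontal deformations exactly by the dimension count of the previous paragraph. Otherwise $E_0$ is a rational surface with $H^1(E_0,\mathcal{O}_{E_0})=0$, and a direct computation gives $H^0(E_0,N_{E_0/X_0})=H^1(E_0,N_{E_0/X_0})=0$; feeding this into $0\to N_{E_0/X_0}\to N_{E_0/\mathcal{X}}\to\mathcal{O}_{E_0}\to 0$ yields $H^0(E_0,N_{E_0/\mathcal{X}})\cong\mathbb{C}$ and $H^1(E_0,N_{E_0/\mathcal{X}})=0$, so $E_0$ is unobstructed in $\mathcal{X}$ and, after shrinking $\Delta$, sweeps out a divisor $\mathcal{E}\to\Delta$ with $\mathcal{E}\cap X_0=E_0$. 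Then $E_t\subset X_t$ is a small deformation of $E_0$, still a rational surface with negative normal bundle in $X_t$, hence contractible and carrying rational curves $\ell_t$ of anticanonical degree $-K_{X_0}\cdot\ell_0>0$ (the degree being locally constant along $\mathcal{E}/\Delta$), so $K_{X_t}$ is not nef.

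I expect the divisorial case to be the main obstacle. In \cite{AP97} projectivity is used to extend a polarizing line bundle defining the contraction, which is unavailable in the genuinely K\"ahler setting; the replacement is to know the precise structure of $E_0$ a priori and to carry out the normal-bundle cohomology case by case, which is exactly why \cite[Main Theorem]{Pe98} is essential. A subsidiary subtlety is that a singular quadric cone may smooth to $\mathbb{P}^1\times\mathbb{P}^1$ under deformation, so one must check in both cases that the deformed $E_t$ remains a contractible rational surface inside $X_t$ containing a $K_{X_t}$-negative rational curve; and one should check that none of the curves produced degenerates to a point, which is immediate since their anticanonical degree is a fixed positive integer throughout the family.
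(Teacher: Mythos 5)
Your proposal is correct and follows essentially the same strategy as the paper's proof: invoke the K\"ahler contraction theorem on $X_0$, note that a smooth (K\"ahler) threefold admits no small contraction, and in the divisorial case use Peternell's classification of $(E_0,N_{E_0/X_0})$ together with the vanishing of $H^1(E_0,N_{E_0/X_0})$ and $N_{X_0/\mathcal{X}}\cong\mathcal{O}_{X_0}$ to deform $E_0$, and with it a $K$-negative rational curve, into every nearby fiber. The only divergences are minor: in the fiber-type case the paper simply cites the deformation invariance of uniruledness (Fujiki, Levine) and Brunella's theorem in place of your stable-map dimension count, and your explicit handling of an exceptional divisor fibered over a curve (via the ruling with $N_{\ell_0/X_0}\cong\mathcal{O}\oplus\mathcal{O}(-1)$) addresses a case that the paper's displayed list of pairs $(E,N_{E|X_0})$ passes over.
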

Recall the famous abundance theorem of K\"ahler threefolds proved by F. Campana--A. H\"oring--Peternell \cite{CHP16,CHP23} (and also O. Das--W. Ou \cite{DO23,DO24}):
\begin{theo}\label{abundance}
Let $X$ be a normal $\mathbb{Q}$-factorial compact K\"ahler threefold 
with at most terminal singularities such that $K_X$ is nef. 
Then $K_X$ is semiample.
\end{theo}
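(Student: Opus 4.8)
The statement is the abundance theorem for minimal K\"ahler threefolds, and the natural plan is to run the argument through the numerical dimension $\nu := \nu(K_X) \in \{0,1,2,3\}$, reducing everything to the equality $\kappa(X) = \nu$ of the Kodaira and numerical dimensions, from which semiampleness will follow. The strategy mirrors the projective case of Miyaoka--Kawamata--Koll\'ar, but every positivity and vanishing ingredient must be re-established in the K\"ahler-analytic category; this is precisely the content of the work of Campana--H\"oring--Peternell and Das--Ou.

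First I would dispose of the two extreme cases. When $\nu = 3$, the line bundle $K_X$ is nef and big, and I would invoke the K\"ahler analogue of the base-point-free theorem to conclude directly that $K_X$ is semiample. When $\nu = 0$, i.e. $K_X \equiv 0$, the task is to show that $K_X$ is torsion; here I would appeal to the Beauville--Bogomolov-type decomposition for terminal K\"ahler threefolds with numerically trivial canonical class, which produces a finite quasi-\'etale cover whose canonical bundle is trivial, whence $K_X$ is torsion and $\kappa(X) = 0 = \nu$.

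The heart of the matter is the non-vanishing statement $\kappa(X) \geq 0$ together with the intermediate cases $\nu \in \{1,2\}$. For non-vanishing I would split according to the irregularity $q(X) = h^1(X,\mathcal{O}_X)$. When $q(X) > 0$ the Albanese map $\mathrm{alb}\colon X \to \mathrm{Alb}(X)$ is available, and one analyzes cases by the dimension of its image, using the positivity of the relevant direct image sheaves and abundance in lower dimension (for curves and surfaces, where it is classical). When $q(X) = 0$, I would combine Hirzebruch--Riemann--Roch estimates for $\chi(X, mK_X)$ with Kawamata--Viehweg and Nadel-type vanishing in the K\"ahler setting and, crucially, with Miyaoka's generic semipositivity of the cotangent sheaf $\Omega_X^1$ adapted to compact K\"ahler threefolds, so as to force a nonzero pluricanonical section.

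With $\kappa(X) \geq 0$ in hand, I would promote it to $\kappa(X) = \nu$ by studying the Iitaka fibration $f\colon X \dashrightarrow Z$ and the positivity of $f_*(mK_X)$; a strict inequality $\kappa(X) < \nu$ would contradict the generic semipositivity, via the canonical bundle formula applied fiberwise together with abundance on the fibers. Finally, once $\kappa(X) = \nu$, the Iitaka fibration combined with the base-point-free theorem yields the semiampleness of $K_X$. The main obstacle, exactly as in the K\"ahler framework, is the case $\nu = 1$ and the underlying non-vanishing: unlike in the projective case one cannot cut with ample divisors or invoke the full algebraic minimal model machinery, so the generic semipositivity and the positivity of direct images must be proved analytically, and the K\"ahler minimal model program for threefolds (existence of divisorial and flipping contractions and their termination) is needed merely to reach the minimal model setting in which these tools can be deployed.
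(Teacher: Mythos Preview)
The paper does not prove this theorem at all: it is merely recalled as a known result and attributed to Campana--H\"oring--Peternell \cite{CHP16,CHP23} and Das--Ou \cite{DO23,DO24}, with no argument given. So there is no proof in the paper against which your proposal can be compared.

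That said, your outline is a fair high-level summary of how the cited references actually proceed: the case split on the numerical dimension $\nu(K_X)$, the reduction to non-vanishing $\kappa(X)\ge 0$, the use of the Albanese map when $q(X)>0$, Riemann--Roch combined with K\"ahler versions of vanishing and Miyaoka-type semipositivity when $q(X)=0$, and then the promotion $\kappa(X)=\nu(K_X)$ via positivity of direct images and the canonical bundle formula. The honest caveat is that each of these steps in the K\"ahler (non-projective) setting is a substantial piece of work in its own right---the existence of contractions, the analytic vanishing theorems, generic semipositivity, and especially the delicate cases $\nu=1,2$---and your sketch flags these as inputs rather than supplying them. As a roadmap through \cite{CHP16,CHP23,DO23,DO24} it is reasonable; as a self-contained proof it would of course be incomplete, but that is to be expected for a theorem of this depth.
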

As a direct corollary of Theorems \ref{kahler}, \ref{abundance} and M. Levine's deformation invariance of plurigenera \cite[Corollary 1.10]{Le1}, one obtains the global stability of  semiampleness of canonical line bundles and deformation invariance of generalized plurigenera by Nakayama's argument \cite{Na87}.
\begin{coro}[=Corollary \ref{kahler3sm}]
Let $\pi:\mathcal{X}\rightarrow \Delta$ be a smooth K\"ahler family of threefolds. If $K_{X_0}$ is nef (or equivalently semiample), then for any $t\in\Delta$, $K_{X_t}$ is semiample and $K_\mathcal{X}$ is $\pi$-semiample over $\pi^{-1}(U_t)$, where $U_t$ is a Zariski neighborhood of $t$. Furthermore, for any $i \geq 0$ and $m \geq 1$, the generalized  $m$-genus $P^{i}_m(X_t)$ is independent of $t\in\Delta$.
\end{coro}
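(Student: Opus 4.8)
The plan is to show that the nef locus $\mathcal{N}:=\{t\in\Delta\ |\ K_{X_t}\ \text{is nef}\}$ is both closed and open in the connected disk $\Delta$, so that the hypothesis $0\in\mathcal{N}$ forces $\mathcal{N}=\Delta$; I would then upgrade nefness to semiampleness fibrewise and deduce the constancy of every $P^i_m$ by Nakayama's spreading-out argument \cite{Na87}.

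First I would note that each fibre $X_t$ is a smooth --- hence $\mathbb{Q}$-factorial and (trivially) terminal --- compact K\"ahler threefold, so the abundance Theorem \ref{abundance} makes ``$K_{X_t}$ nef'' and ``$K_{X_t}$ semiample'' equivalent; this gives the parenthetical in the statement and lets me pass freely between the two notions. Closedness of $\mathcal{N}$ is then immediate from Theorem \ref{kahler}: applied to $\pi$ re-centred at any $s\in\Delta\setminus\mathcal{N}$, it shows that a whole punctured neighbourhood of $s$ lies in $\Delta\setminus\mathcal{N}$, so $\Delta\setminus\mathcal{N}$ is open.

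For openness --- the substantive step --- I would fix $t_0\in\mathcal{N}$, choose $m\geq 1$ with $mK_{X_{t_0}}$ base-point free, and invoke M. Levine's deformation invariance of plurigenera \cite[Corollary 1.10]{Le1} to get $h^0(X_t,mK_{X_t})$ constant near $t_0$. By Grauert's theorems $\pi_*\mathcal{O}_{\mathcal{X}}(mK_{\mathcal{X}})$ is then locally free near $t_0$ and commutes with base change, so the cokernel $\mathcal{Q}$ of the relative evaluation map $\pi^*\pi_*\mathcal{O}_{\mathcal{X}}(mK_{\mathcal{X}})\to\mathcal{O}_{\mathcal{X}}(mK_{\mathcal{X}})$ vanishes along $X_{t_0}$; properness of $\pi$ makes $\pi(\mathrm{Supp}\,\mathcal{Q})$ a discrete subset of $\Delta$ missing $t_0$, hence $\mathcal{Q}$ vanishes over $\pi^{-1}(U_{t_0})$ for a Zariski neighbourhood $U_{t_0}$ of $t_0$. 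Thus $mK_{\mathcal{X}}$ is $\pi$-free over $\pi^{-1}(U_{t_0})$, so $K_{\mathcal{X}}$ is $\pi$-semiample there, and restricting to fibres shows $K_{X_t}$ is semiample --- hence nef --- for every $t\in U_{t_0}$; so $\mathcal{N}$ is open and the neighbourhoods $U_t$ of the statement are produced along the way. This is Nakayama's argument.

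Connectedness of $\Delta$ then yields $\mathcal{N}=\Delta$ and the semiampleness of every $K_{X_t}$. For the generalized plurigenera the case $i=0$ is \cite[Corollary 1.10]{Le1} again (global on $\Delta$ by connectedness), while for $i>0$ I would run Nakayama's argument a second time, exploiting the relative semiampleness of $K_{\mathcal{X}}$ near each fibre --- which expresses the relevant direct images as higher direct images along the relative Iitaka fibration --- and combining Koll\'ar-type local-freeness and base-change statements with Grauert's semicontinuity to obtain that $h^i(X_t,mK_{X_t})$ is locally constant, hence constant. The hard part is the openness step: one must verify that Levine's $h^0$-invariance together with Grauert base change genuinely propagates \emph{base-point freeness} of $mK_{X_t}$ --- not merely its effectivity --- to a full neighbourhood of $t_0$, which is exactly the content of Nakayama's spreading-out; and, for $i>0$, since $K_{X_t}$ need not be big, the constancy of $h^i(X_t,mK_{X_t})$ cannot be read off a vanishing theorem and has instead to be extracted from the relative semiample structure via Koll\'ar's results.
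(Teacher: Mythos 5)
Your proposal is correct and follows essentially the same route as the paper: Theorem \ref{kahler} for the closedness of the nef locus, Levine's invariance of plurigenera combined with Grauert base change and the Nakayama-lemma spreading-out for the openness/relative freeness of $mK_{\mathcal{X}}$, the abundance Theorem \ref{abundance} to pass between nef and semiample, and Nakayama's local-freeness of $R^i\pi_*\mathcal{O}_{\mathcal{X}}(K_{\mathcal{X}}^{\otimes m})$ together with cohomology and base change for the generalized plurigenera. The only cosmetic difference is that you package the argument as an explicit open--closed dichotomy on $\Delta$, whereas the paper first produces semiampleness on a dense Zariski open set and then invokes closedness of nefness; the content is identical.
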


Based on these, it is reasonable to propose:
\begin{conj}
Let $\pi:\mathcal{X}\rightarrow \Delta$ be a smooth family of compact K\"ahler manifolds or even compact complex manifolds in the Fujiki class $\mathcal{C}$  (i.e., bimeromorphic to compact K\"ahler manifolds). If $K_{X_0}$ is nef, then for any $t\in\Delta$, $K_{X_t}$ is semiample and $K_\mathcal{X}$ is $\pi$-semiample over $\pi^{-1}(U_t)$, where $U_t$ is a Zariski neighborhood of $t$. Furthermore, for any $i \geq 0$ and $m \geq 1$, the generalized  $m$-genus $P^{i}_m(X_t)$ is independent of $t\in\Delta$.
\end{conj}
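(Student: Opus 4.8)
The plan is to leverage Theorem \ref{kahler} for one half of a connectedness argument, Nakayama's section-extension technique (fed by Levine's invariance of the plurigenera) for the other half, and the abundance Theorem \ref{abundance} to pass freely between nefness and semiampleness of the fiberwise canonical bundles. \emph{Step 1 (every $K_{X_t}$ is nef, hence semiample).} I would show that the nef locus $\mathcal{N}:=\{t\in\Delta:K_{X_t}\ \text{is nef}\}$ is both closed and open in $\Delta$. Closedness is immediate from Theorem \ref{kahler}: if $K_{X_{t_0}}$ is not nef, restricting $\pi$ to a small disk centered at $t_0$ (still a smooth K\"ahler family of threefolds) and applying Theorem \ref{kahler} shows $K_{X_t}$ is not nef for all $t$ near $t_0$, so the complement of $\mathcal{N}$ is open. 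For openness, take $t_0\in\mathcal{N}$; since $X_{t_0}$ is a smooth, hence $\mathbb{Q}$-factorial terminal, compact K\"ahler threefold with $K_{X_{t_0}}$ nef, Theorem \ref{abundance} makes $K_{X_{t_0}}$ semiample, and the spreading argument of Step 2 then makes $K_{X_t}$ semiample, hence nef, for all $t$ in a neighborhood of $t_0$. As $\Delta$ is connected and $0\in\mathcal{N}$, this forces $\mathcal{N}=\Delta$; applying Theorem \ref{abundance} fiberwise once more, every $K_{X_t}$ is semiample.

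\emph{Step 2 (spreading of semiampleness and $\pi$-semiampleness).} Fix $t_0$ with $K_{X_{t_0}}$ semiample, so $mK_{X_{t_0}}$ is base-point-free for some $m\geq1$. By Levine's deformation invariance of plurigenera \cite[Corollary 1.10]{Le1}, $h^0(X_t,mK_{X_t})$ is independent of $t$, so $\pi_*(mK_{\mathcal{X}})$ is locally free near $t_0$ with formation commuting with base change; hence every section of $mK_{X_{t_0}}$ extends to a section of $mK_{\mathcal{X}}$ over $\pi^{-1}(U)$ for some disk $U\ni t_0$. The common zero locus $Z$ of a basis of such extensions is a closed analytic subset of $\pi^{-1}(U)$ disjoint from $X_{t_0}$; since $\pi$ is proper, $\pi(Z)$ is a closed analytic subset of $U$ avoiding $t_0$, hence discrete, and $U_{t_0}:=U\setminus\pi(Z)$ is a Zariski neighborhood of $t_0$. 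Over $\pi^{-1}(U_{t_0})$ the system $|mK_{\mathcal{X}}|$ is base-point-free, so $K_{\mathcal{X}}$ is $\pi$-semiample there and $K_{X_t}$ is semiample for every $t\in U_{t_0}$. This supplies both the openness invoked in Step 1 and the asserted $\pi$-semiampleness of $K_{\mathcal{X}}$ over each $\pi^{-1}(U_t)$.

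\emph{Step 3 (invariance of the generalized plurigenera $P^i_m$).} Using the relative base-point-freeness from Step 2, for $m$ sufficiently divisible $mK_{\mathcal{X}}$ induces a relative Iitaka fibration $f:\mathcal{X}\to\mathcal{Z}$ over $U_{t_0}$ with $mK_{\mathcal{X}}\cong f^{*}\mathcal{H}$ for a $\pi$-ample $\mathcal{H}$ on $\mathcal{Z}$. Factoring $\pi$ through $f$ and running the Leray spectral sequence, I would combine Koll\'ar-type torsion-freeness and vanishing for the higher direct images $R^{q}f_{*}\mathcal{O}_{\mathcal{X}}$ with relative Serre vanishing for the $\pi$-ample $\mathcal{H}$ to show each $R^{i}\pi_{*}(mK_{\mathcal{X}})$ is locally free with formation commuting with base change. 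Grauert's base-change theorem then gives that $h^{i}(X_t,mK_{X_t})=P^{i}_m(X_t)$ is independent of $t$, first for sufficiently divisible $m$ and then for all $m\geq1$ by the usual d\'evissage through the $\pi$-semiample structure; this is precisely Nakayama's argument \cite{Na87}.

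\emph{Main obstacle.} Granting Theorems \ref{kahler}, \ref{abundance} and Levine's invariance, Steps 1 and 2 are structurally clean. The crux is Step 3: whereas the invariance of $h^0$ is handed to us, upgrading it to all higher cohomologies $h^i$ $(i\geq1)$ requires controlling the higher direct images of powers of the relatively semiample $K_{\mathcal{X}}$, and the Koll\'ar torsion-freeness, vanishing, and relative Iitaka fibration employed here are classically projective constructions. The real work is to run them in the K\"ahler, non-projective category for a threefold family—either via the recent K\"ahler analogues of Koll\'ar's theorems or by reducing fiberwise, through the relative semiample morphism $f$, to a Moishezon or projective situation in which base change for the $R^{i}\pi_{*}$ can be verified.
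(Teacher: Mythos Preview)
The statement you are addressing is stated in the paper as a \emph{conjecture}, not a theorem; the paper offers no proof of it and explicitly proposes it as an open problem. Your argument does not prove the conjecture as stated: at every essential step you invoke results that are specific to dimension~$3$ (Theorem~\ref{kahler} for the closedness of the nef locus, Theorem~\ref{abundance} for abundance), and in Step~1 you even write ``smooth K\"ahler family of threefolds''. What you have actually sketched is a proof of Corollary~\ref{kahler3sm}, the threefold case, which the paper already establishes by essentially the same route (closedness via Theorem~\ref{Kahler nef open}, openness via Levine plus Nakayama's spreading argument, abundance to pass between nef and semiample, and then Proposition~\ref{Ri-lf} together with cohomology and base change for the $P^i_m$).

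The genuine content of the conjecture lies precisely where your tools run out: in arbitrary dimension, and for families that are merely fiberwise K\"ahler or even only Fujiki class~$\mathcal{C}$ (so possibly not even a ``smooth K\"ahler family'' in the paper's sense). In that setting there is no known analogue of Theorem~\ref{kahler} (this is Question~\ref{kahler nef conj}, still open for $n\geq 4$), no abundance theorem, and Levine's invariance of plurigenera is not known in the Fujiki class~$\mathcal{C}$ generality. Your ``main obstacle'' paragraph correctly flags Step~3 as delicate, but the far more serious gap is that Steps~1 and~2 simply do not go through beyond threefolds: without a higher-dimensional K\"ahler cone/contraction theory and abundance, neither the closedness nor the openness of the nef locus is available, and that is exactly why the paper records this as a conjecture rather than a corollary.
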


\textbf{Acknowledgements:}  The authors would like to express their gratitude to Professors M. Andreatta, O. Fujino, C. Hacon, A. H\"oring, Xiao-Lei Liu, S. Matsumura, T. Peternell, J. A. Wi\'sniewski, Kang Zuo for their interest in our paper. We are also sincerely grateful to Dr. Jian Chen and Yi Li for many useful discussions on deformation theory and birational geometry.

\section{Preliminaries: positivities, moduli and minimal model program}
In this section, we introduce the preliminaries on the positivities of line bundles, moduli space of polarized manifolds and minimal model program of projective morphism between complex analytic spaces, to be used in this paper.
\subsection{Positivities of line bundles} Let us review various positivities of holomorphic line bundles in the absolute and relative cases. 
\subsubsection{Absolute case}
Let $X$ be a connected compact complex manifold of dimension $n$ and $L$ a holomorphic line bundle on $X$. Recall that the \emph{Kodaira dimension} of $L$ is defined to be
$$\kappa(L):=\limsup_{m\rightarrow \infty}\frac{\log h^{0}(X,L^{\otimes m})}{\log m}.$$ Then $\kappa(L)\in \{-\infty, 0, \cdots, n\}$. Denote by $K_X$ the canonical line bundle of $X$ and define $\kappa(X)$ as $\kappa(K_X)$. If $\kappa(L)=n$, then $L$ is called \emph{big}. It is well known that $L$ is big if and only if the first Chern class  $c_{1}(L)$ of $L$ contains a K\"ahler current representative. If $X$ admits a big line bundle, then $X$ is a \emph{Moishezon} manifold. In particular, if $K_X$ is big, then $X$ is said to be of \emph{general type}. 

If $c_{1}(L)$ contains a closed positive (1,1)-current, then $L$ is called \emph{pseudo-effective}. Let $\omega_{X}$ be a smooth Hermitian metric on $X$. For any $\epsilon>0$, if there is a smooth representative $\alpha_{\epsilon}\in c_{1}(L)$ such that $\alpha_{\epsilon}\geq -\epsilon \omega_{X}$, then $L$ is called \emph{nef}. If $L$ is nef, the \emph{numerical dimension} of $L$ is defined as $$\nu(L):=\max\{k\in \mathbb{N}\ |\ c_{1}(L)^k\neq 0\}.$$ Moreover, if $\nu(L)=\kappa(L)$, then $L$ is called \emph{good}. In particular, if $K_X$ is nef, we define $\nu(X)$ by  $\nu(K_X)$. If $L$ is big and nef, then $L$ is good. And Kawamata \cite[Theorem 1.1]{Ka85} proves that $K_X$ is semiample if and only if $K_X$ is nef and good. Let $\{s_1, \cdots, s_m\}$ be a basis of $H^{0}(X,L)$. Then the \emph{Kodaira map} associated to $L$ is defined to be:
$$\phi_{L}: X\dashrightarrow \mathbb{P}(H^{0}(X,L)^*), x\mapsto [s_1(x):\cdots:s_m(x)].$$
If $H^0(X,L^{\otimes m})=0$ for any integer $m>0$, then $\kappa(L)=-\infty$. Otherwise, it is well known that $$\kappa(L)=\max_m\{\dim_{\mathbb{C}}\phi_{L^{\otimes m}}(X)\}.$$

Then the \emph{base locus} $\text{Bs}(L)$ of $L$ is defined to be 
$$\text{Bs}(L)=\{x\in X\ |\ s(x)=0\ \text{for any}\ s\in H^{0}(X,L)\}.$$
If $\text{Bs}(L)=\emptyset$, then $L$ is called \emph{globally generated}. If $L^{\otimes m}$ is globally generated for some $m>0$, then $L$ is called \emph{semiample}. If $L$ is globally generated and $\phi_{L}$ is an embedding, then $L$ is called \emph{very ample}. If $L^{\otimes m}$ is very ample for some $m>0$, then $L$ is called \emph{ample}. So if $X$ admits an ample line bundle, then $X$ is a \emph{projective} manifold. By Kodaira's embedding theorem, $L$ is ample if and only if $c_1(L)$ is a K\"ahler class. 

When $X$ is Moishezon or in particular projective, $L$ is nef if and only if $L\cdot C\geq 0$ for any curve $C\subset X$ by \cite[Corollary 1]{Pa98}. For K\"ahler manifolds, this is not right in general but for the canonical line bundle.
\subsubsection{Relative case}
Let $\pi:X\rightarrow S$ be a proper holomorphic map between complex manifolds  with fiber $X_s:=\pi^{-1}(s)$ for $s\in S$ and $L$ a holomorphic line bundle on $X$. 

If the canonical morphism $$\phi:\pi^*\pi_*L\rightarrow L$$ is surjective, then $L$ is called \emph{$\pi$-globally generated}. 
If $L^{\otimes m}$ is $\pi$-globally generated for some $m>0$, then $L$ is called \emph{$\pi$-semiample}.
If $L$ is $\pi$-globally generated and induces the embedding
$$X\rightarrow\mathbb{P}(\pi_*L)$$
 over $S$, then $L$ is called \emph{$\pi$-very ample}. If $L^{\otimes m}$ is $\pi$-very ample for some $m>0$, then $L$ is called \emph{$\pi$-ample}. It is well known that $L$ is $\pi$-ample if and only if $L|_{X_s}$ is ample for any $s\in S$. If $X$ admits a $\pi$-ample line bundle, then $\pi$ is called \emph{projective}. 
 
 If $L|_{X_s}$ is big (resp. nef) for any $s\in S$, then $L$ is called  \emph{$\pi$-big} (resp. \emph{$\pi$-nef}). If $X$ admits a $\pi$-big line bundle, then $\pi$ is called Moishezon. Obviously, $\pi$ is Moishezon if and only if $\pi$ is bimeromorphic to a projective morphism.
By definition, 
$\pi$ is called \emph{Moishezon} if $\pi$ is bimeromorphic to a projective morphism over the same base $S$. Furthermore, if $\pi$ is smooth over $\Delta$, this definition amounts to the existence of a $\pi$-big line bundle by the bimeromorphic embedding \cite[Theorem 1.4]{RT21}, \cite[Theorem 1.8]{RT22}.
 
If $\pi$ is a smooth projective family over the unit disk in $\mathbb{C}$, Siu \cite{Si98,Si02} proves that  for any positive integer $m$, the $m$-genus of the fibers $X_s$ (i.e., the dimension $h^{0}(X_s,K_{X_s}^{\otimes m})$ of the global section of $m$-canonical line bundle over the fibers $X_s$) is locally constant and thus $\kappa(X_s)$ is locally constant. The fiberwise Moishezon case is obtained in \cite[Theorem 1.2.(i)]{RT22}. 

 \subsection{Moduli of polarized manifolds}
For a polynomial $h\in\mathbb{Q}[T]$, let $\mathcal{P}_h$ be the fibered category over the category of schemes, such that for a scheme $B$, the groupoid $\mathcal{P}_h(B)$ is as follows

\begin{equation*}
\begin{aligned}
\mathcal{P}_h(B)=\{(f:X\rightarrow B)\ |\ &f\ \text{is a smooth morphism}; \mathcal{H}\  \text{is a line bundle on}\ X;\\
 &(X_b, \mathcal{H}_b)\ \text{is a polarized manifold},\ K_{X_b}\ \text{is semiample, for any $b\in B$};\\
 &h(m)=\chi(\mathcal{H}_b^{\otimes m}),\ m\in \mathbb{Z}\}.
\end{aligned}
\end{equation*}

The arrow of $\mathcal{P}_{h}(B)$ from $(f:X\rightarrow B, \mathcal{H})$ to $(f':X'\rightarrow B, \mathcal{H}')$ is an isomorphism $\tau: X\rightarrow X'$ such that $\mathcal{H}_b$ and $\tau^*{\mathcal{H}'}|_{X_b}$ are numerically equivalent for all $b\in B$. By \cite[Theorem 4.6]{Vi91}, there exists a coarse separated moduli algebraic space ${P}_{h}$ for $\mathcal{P}_h$. Note that $P_h$ is the quotient of the moduli space of polarized manifolds by compact equivalence relations by \cite{Vi91}.

If $(\phi: X\rightarrow B, \mathcal{H})$ is a smooth family of polarized manifolds with semiample canonical line bundles and Hilbert polynomial $h(m)=\chi(\mathcal{H}_b^{\otimes m})$ for all $m\in \mathbb{Z}$ and $b\in B$, then we have an induced morphism
$$\mu_{\phi}: B\rightarrow P_h$$
satisfying that $\mu_{\phi}(b)=(X_b, \mathcal{H}_b)\in P_h$ for all $b\in B$.
 \subsection{Minimal model program of projective morphisms between complex analytic spaces}
 We refer to \cite{Fu22,Na87,n04} for the minimal model program of projective morphisms between complex analytic spaces. 
 
Let $X$ be a normal complex analytic variety. The \emph{canonical sheaf} $\omega_X$ is the unique reflexive sheaf whose restriction to $X_{sm}$ is isomorphic to the sheaf $\Omega^{\dim_{\mathbb{C}}X}_{X_{sm}}$, where $X_{sm}$ is the smooth locus of $X$. Note that unlike the case of algebraic varieties, $\omega_X$ here does not necessarily correspond to a Weil divisor $K_X$ such that $\omega_X\cong \mathcal{O}_X(K_X)$. However, by abuse of notation we will say that $K_X$ is a canonical divisor when we actually mean the canonical sheaf $\omega_{X}$. This doesn't create any problem in general as running the minimal model program involves intersecting subvarieties with $\omega_X$.  
 
Let $\pi: X\rightarrow Y$ be a proper surjective morphism between complex analytic spaces and $W$ a Stein compact subset of $Y$. Recall that a compact subset on a complex analytic space is said to be \emph{Stein compact} if it admits a fundamental system of Stein open neighborhoods. 

And $\pi: X\rightarrow Y$ and $W$ satisfy the 
\emph{conditions $(P)$} if 
\begin{enumerate}[(P1)]
\item $X$ is a normal complex variety;
\item $Y$ is a Stein space;
\item $W$ is a Stein compact subset of $Y$;
\item $W\cap Z$ has only finitely many connected components for any analytic subset $Z$ which is defined over an open neighborhood of $W$.
\end{enumerate}

\begin{rema}
Assume that $Y$ is Stein. And we take a $\pi$-ample line bundle $\mathcal{A}$ on X. Since Cartan's theorem $A$ implies that there exists a sufficiently large positive integer $m$ such that $$H^{0}(X,\omega_{X}\otimes\mathcal{A}^{\otimes m})\cong H^{0}(Y,\pi_{*}(\omega_{X}\otimes\mathcal{A}^{\otimes m}))\ne 0$$
and $$H^{0}(X,\mathcal{A}^{\otimes m})\cong H^{0}(Y,\pi_{*}(\mathcal{A}^{\otimes m}))\ne 0,$$
we can always take a Weil divisor $K_{X}$ on $X$ satisfying $\omega_{X}\cong \mathcal{O}_{X}(K_X)$. As usual, we call it the \emph{canonical divisor}  of $X$.
More generally, let $\mathcal{L}$ be a line bundle (resp. reflexive sheaf of rank one) on $X$. By the same argument as above, we can take a Cartier (resp. Weil) divisor $D$ on $X$ such that $\mathcal{L}\cong \mathcal{O}_{X}(D)$.
\end{rema}
\begin{theo}[Cartan's theorem $A$]\label{cartanA}
Let $Y$ be a Stein space and $\mathcal{F}$ a coherent analytic sheaf over $Y$. Then $\mathcal{F}$ is globally generated by its sections, i.e., for every point $y\in Y$, the stalk $\mathcal{F}_y$ is generated by the
germs at $z$ of global sections of $\mathcal{F}$ over $Y$.
\end{theo}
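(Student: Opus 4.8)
The final statement is the classical Cartan Theorem $A$, so the plan is to recall the standard two–step route: first establish Cartan's Theorem $B$ (vanishing of higher coherent cohomology on Stein spaces), and then deduce Theorem $A$ from it by a local argument with the ideal sheaf of a point together with Nakayama's lemma.

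\textbf{Step 1 (Theorem $B$).} The first thing I would do is prove that $H^{q}(Y,\mathcal{G})=0$ for every coherent analytic sheaf $\mathcal{G}$ on a Stein space $Y$ and every $q\ge 1$. On a Stein \emph{manifold} this follows from Hörmander's $L^{2}$ estimates for the $\bar\partial$-operator on strictly pseudoconvex domains (or, equivalently, from the solvability of the additive Cousin problem together with Dolbeault's isomorphism), combined with Oka's coherence theorem, after which one exhausts $Y$ by an increasing sequence of Stein compacta and passes to the limit by a Mittag-Leffler/Runge-type approximation argument. For a general (possibly singular) Stein space I would reduce to the manifold case: locally $Y$ embeds as a closed analytic subspace of a polydisc in $\mathbb{C}^{N}$, the sheaf $\mathcal{G}$ extends by zero to a coherent sheaf on the polydisc, and the local solutions of the relevant cohomological problems are glued using Grauert's bumping/attaching technique (the general Oka principle), again over an exhaustion of $Y$ by Stein compact subsets.

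\textbf{Step 2 (Theorem $B\Rightarrow$ Theorem $A$).} Fix $y\in Y$ and let $\mathfrak{m}_{y}\subset\mathcal{O}_{Y}$ denote the ideal sheaf of germs vanishing at $y$. Since $\mathcal{F}$ is coherent, $\mathfrak{m}_{y}\mathcal{F}$ is a coherent subsheaf, and one has the exact sequence
$$0\longrightarrow \mathfrak{m}_{y}\mathcal{F}\longrightarrow \mathcal{F}\longrightarrow \mathcal{F}/\mathfrak{m}_{y}\mathcal{F}\longrightarrow 0,$$
where $\mathcal{F}/\mathfrak{m}_{y}\mathcal{F}$ is a skyscraper sheaf supported at $y$ with stalk the finite-dimensional vector space $\mathcal{F}_{y}/\mathfrak{m}_{y}\mathcal{F}_{y}$. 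By Step 1, $H^{1}(Y,\mathfrak{m}_{y}\mathcal{F})=0$, so the long exact cohomology sequence shows that
$$H^{0}(Y,\mathcal{F})\longrightarrow H^{0}(Y,\mathcal{F}/\mathfrak{m}_{y}\mathcal{F})=\mathcal{F}_{y}/\mathfrak{m}_{y}\mathcal{F}_{y}$$
is surjective. Picking global sections $s_{1},\dots,s_{k}\in H^{0}(Y,\mathcal{F})$ whose images form a basis of $\mathcal{F}_{y}/\mathfrak{m}_{y}\mathcal{F}_{y}$ and applying Nakayama's lemma to the finitely generated $\mathcal{O}_{Y,y}$-module $\mathcal{F}_{y}$, the germs $(s_{i})_{y}$ generate $\mathcal{F}_{y}$. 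Since $y\in Y$ was arbitrary, $\mathcal{F}$ is globally generated.

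\textbf{Main obstacle.} Essentially all of the work sits in Step 1: the $L^{2}$ solvability of $\bar\partial$ with growth control and, for \emph{singular} Stein spaces, the passage from local embeddings to a global statement via Grauert's bumping lemmas and the coherence of ideal sheaves and direct images. Step 2 is purely formal once Theorem $B$ is in hand. For the purposes of this paper one simply invokes the classical references (Cartan, Serre, Grauert–Remmert), so no new argument is needed here.
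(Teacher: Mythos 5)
The paper states Cartan's Theorem A purely as a classical background result and gives no proof of its own, so there is nothing internal to compare against; it is simply quoted for later use (e.g.\ to globally generate $\pi$-semiample bundles over the Stein base $\Delta$). Your two-step argument --- Theorem B for coherent sheaves on Stein spaces, followed by the exact sequence $0\to\mathfrak{m}_{y}\mathcal{F}\to\mathcal{F}\to\mathcal{F}/\mathfrak{m}_{y}\mathcal{F}\to 0$ and Nakayama's lemma applied to the finitely generated $\mathcal{O}_{Y,y}$-module $\mathcal{F}_{y}$ --- is the standard and correct derivation, with all the genuine analytic content concentrated in Theorem B exactly as you say.
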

Let $$D:=\Sigma a_iD_i$$ be an effective $\mathbb{Q}$-divisor on the normal complex analytic space $X$, where $D_i$ are prime divisors, such that $K_X+D$ is $\mathbb{Q}$-Cartier. Let $\mu: \widetilde{X}\rightarrow X$ be a proper bimeromorphic morphism between normal complex varieties,  and 
$$K_{\widetilde{X}}+\mu_*^{-1}{D}\thicksim_{\mathbb{Q}} \mu^*({K_X+D})+\sum_i  b_iE_i,$$ 
where $E_i$ are prime exceptional divisors and $\mu_*^{-1}{D}$ is the strict transform of $D$. The pair $(X,D)$ is called \emph{pure log terminal} (\emph{plt}) if for any $\mu: \widetilde{X}\rightarrow X$ and every $\mu$-exceptional divisor $E_i$,  the \emph{log discrepancies} $a(E_i,X,D):=b_i>-1$. Furthermore, if the round-down $\lfloor D\rfloor$ of $D$ is $0$, then the pair $(X,D)$ is called \emph{Kawamata log terminal} (\emph{klt}).  Moreover, if there exists \emph{log resolution} of the pair $(X,D)$ (i.e., a proper bimeromorphic morphism $\mu: \widetilde{X}\rightarrow X$ from a non-singular complex variety $\widetilde{X}$ such that the exceptional locus $\text{Exc}(\mu)$ and  
$\mu^{-1}_*(D)\cup\text{Exc}(\mu)$ are simple normal crossing divisors on $\widetilde{X}$), and the \emph{log discrepancies} $a(E_i,X,D):=b_i>-1$ for every $\mu$-exceptional divisor $E_i$, then $(X,D)$ is called \emph{divisorial log terminal} (\emph{dlt}).

Let $V$ be an $\mathbb{R}$-vector space. A subset $N\subset V$ is called a \emph{cone} if $0\in N$ and $N$ is closed under multiplication
by positive scalars. A subcone $M\subset N$ is called \emph{extremal} if $u,v\in N, u+v\in M$ implies that $u,v\in M.$ A one-dimensional extremal subcone is called an \emph{extremal ray}.
 
The free abelian group $Z_1(X/Y;W)$ is generated by the projective integral curves $C$ on $X$ such that $\pi(C)$ is a point of $W$. Take $C_1$, $C_2\in Z_1(X/Y;W)\otimes_{\mathbb{Z}} \mathbb{R}$. If $C_1\cdot L=C_2\cdot L$ holds for every $L\in \text{Pic}(\pi^{-1}(U))$ and every open neighborhood $U$ of $W$, then we write $C_1\equiv_{W}C_2$. Set $$N_1(X/Y;W):=Z_1(X/Y;W)\otimes_{\mathbb{Z}} \mathbb{R}/\equiv_W.$$ Then $(P4)$ implies that $N_1(X/Y;W)$ is a finite $\mathbb{R}$-vector space. The \emph{Kleiman--Mori cone} $$\overline{NE}(X/Y;W)$$ is defined to be the closure of the convex cone in $N_1(X/Y;W)$ generated by projective integral curves $C$ on $X$ such that $\pi(C)$ is a point in $W$.

Nakayama \cite{Na87}, Fujino \cite{Fu22} and Das--Hacon--P$\breve{\textrm{a}}$un \cite{DHP24} develop the theory of projective morphisms between complex analytic spaces. We just list a few of them to be used later, although they hold more generally for boundary $\mathbb{R}$-divisors and mostly dlt pairs.

Let $\pi: X\rightarrow Y$ be a projective surjective morphism between complex analytic spaces, $W$ a Stein compact subset of $Y$ and  $(X,\Gamma)$ a klt pair.
\begin{theo}[Rationality theorem, {\cite[Theorem 4.11]{Na87}, \cite[Theorem 7.1]{Fu22}}]\label{rt}
Assume that $\pi: X\rightarrow Y$ and $W$ satisfy conditions $(P)$. Let $H$ be a $\pi$-ample Cartier divisor on $X$. Assume that $K_X+\Gamma$ is not $\pi$-nef over $W$. Then 
$$r:=\max\{t\in\mathbb{R}\ |\ \text{$H+t(K_X+\Gamma)$ is $\pi$-nef over $W$}\}$$
is a positive rational number.
\end{theo}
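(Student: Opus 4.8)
This is a theorem of Nakayama \cite{Na87} and Fujino \cite{Fu22}; the plan is to transplant the classical rationality theorem (in the form due to Kawamata--Matsuda--Matsuki and Koll\'ar--Mori) to the analytic category, the foundations being supplied by \cite{Na87,Fu22,DHP24}. \emph{Step 1 (finiteness and positivity of $r$).} By $(P4)$ the space $N_1(X/Y;W)$ is finite-dimensional, so the relative Kleiman criterion holds over $W$: the $\pi$-ample-over-$W$ classes form the interior of the closed cone of $\pi$-nef-over-$W$ classes. As $H$ is $\pi$-ample, openness of that interior shows $H+t(K_X+\Gamma)$ is still $\pi$-ample over $W$ for all small $t>0$, so $r>0$; closedness of the $\pi$-nef cone shows the supremum is attained and that $H+r(K_X+\Gamma)$ is $\pi$-nef but not $\pi$-ample over $W$. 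Since $K_X+\Gamma$ is not $\pi$-nef over $W$ there is a projective integral curve $C$ with $\pi(C)\in W$ and $(K_X+\Gamma)\cdot C<0$, whence $\bigl(H+t(K_X+\Gamma)\bigr)\cdot C<0$ for $t\gg 0$, so $r<\infty$. Finally, for $0<t<r$ the identity $H+t(K_X+\Gamma)=(1-t/r)H+(t/r)\bigl(H+r(K_X+\Gamma)\bigr)$ exhibits this class as a positive combination of a $\pi$-ample-over-$W$ class and a $\pi$-nef-over-$W$ class, so it is itself $\pi$-ample over $W$.

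\emph{Step 2 (a Hilbert-polynomial frame).} Suppose for contradiction $r\notin\mathbb{Q}$; for notational simplicity assume $K_X+\Gamma$ Cartier (in general one restricts $q$ below to multiples of the index of $K_X+\Gamma$, which changes nothing essential). For $p,q\in\mathbb{Z}$ with $p>0$ set $D(p,q):=pH+q(K_X+\Gamma)$. By Step 1, $D(p,q)$ is $\pi$-ample over $W$ when $q<rp$ and not $\pi$-nef over $W$ when $q>rp$, and $q/p=r$ cannot occur. For $q\le\lfloor rp\rfloor+1$ one has $q-1<rp$, so $D(p,q)-(K_X+\Gamma)$ is $\pi$-ample over $W$; the analytic relative Kawamata--Viehweg vanishing (\cite{Na87,Fu22,DHP24}) then gives $R^i\pi_*\mathcal{O}_X(D(p,q))=0$ near $W$ for $i>0$, and together with Cartan's Theorems $A$ and $B$ (Theorem \ref{cartanA}) and Grauert's coherence and base-change theorems, $\pi_*\mathcal{O}_X(D(p,q))$ is locally free near $W$ of rank
$$P(p,q)=\chi\bigl(X_w,\mathcal{O}_{X_w}(D(p,q)|_{X_w})\bigr)=h^0\bigl(X_w,\mathcal{O}_{X_w}(D(p,q)|_{X_w})\bigr)\ge 0,$$
$w$ being a general point of a component of $W$. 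As a function of $(p,q)$ this is the restriction of a polynomial $\Pi\in\mathbb{Q}[p,q]$ of total degree at most $n:=\dim X-\dim Y$, and since $H|_{X_w}$ is ample its $p^n$-coefficient is $\tfrac1{n!}(H|_{X_w})^n>0$; in particular $\Pi\not\equiv 0$.

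\emph{Step 3 (the Diophantine endgame).} Put $q_p:=\lfloor rp\rfloor+1$, so $q_p>rp$ (hence $D(p,q_p)$ is not $\pi$-nef over $W$) while $q_p-1=\lfloor rp\rfloor<rp$ keeps $D(p,q_p)-(K_X+\Gamma)$ $\pi$-ample over $W$, so $P(p,q_p)=\Pi(p,q_p)$. Writing $q_p=rp+\theta_p$ with $\theta_p=1-\{rp\}\in(0,1)$ and expanding $\Pi$ along the line $q=rp$ as $\Pi(p,rp+\theta)=\sum_{k=0}^n\theta^k\Pi_k(p)$ with $\Pi_k\in\mathbb{R}[p]$, assume $\Pi(p,q_p)=0$ for all large $p$. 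Then $\sum_k\theta_p^k\Pi_k(p)=0$; since $r$ is irrational the $\theta_p$ are dense in $(0,1)$, and dividing by the highest power of $p$ present and passing to the limit along subsequences with $\theta_p$ tending to an arbitrary value in $(0,1)$ forces the leading coefficients of all $\Pi_k$ of maximal degree to vanish, so all $\Pi_k\equiv 0$, and hence (by descending induction on the $q$-degree) $\Pi\equiv 0$ --- contradicting Step 2. Therefore $P(p,q_p)>0$ for infinitely many $p$: for each such $p$ the Cartier divisor $D(p,q_p)$ has nonzero relative sections near $W$ but is not $\pi$-nef over $W$, while $D(p,q_p)-(K_X+\Gamma)$ is $\pi$-ample over $W$. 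Feeding this into the analytic relative base-point-free theorem of Fujino \cite{Fu22} (see also \cite{Na87,DHP24}), combined with a Diophantine pigeonhole that picks $p$ with $q_p/p$ sufficiently close to $r$ --- exactly as in the classical proof --- produces a projective curve over $W$ on which $H+r(K_X+\Gamma)$ is numerically trivial and $(K_X+\Gamma)$ is negative, which is incompatible with the maximality of $r$ from Step 1. This contradiction shows $r\in\mathbb{Q}$.

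The main obstacle is this last endgame: extracting an honest contradiction from the near-wall effectivity without circularly invoking the Cone Theorem (whose proof rests on this very statement), together with transporting every ingredient --- the relative Kleiman criterion, analytic Kawamata--Viehweg vanishing, the analytic relative base-point-free theorem, coherence of $\pi_*$ near a Stein compact, and finiteness of $N_1(X/Y;W)$ --- faithfully into the analytic setting under conditions $(P)$. All of these are available from \cite{Na87,Fu22,DHP24}, so the classical argument carries over.
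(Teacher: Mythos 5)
First, a point of comparison: the paper does not prove Theorem \ref{rt} at all. It is imported verbatim as a black box from \cite[Theorem 4.11]{Na87} and \cite[Theorem 7.1]{Fu22}, so there is no in-paper argument to measure your sketch against. That said, your overall strategy --- transplant the classical Kawamata--Matsuda--Matsuki/Koll\'ar--Mori rationality proof into the analytic category, with finiteness of $N_1(X/Y;W)$ from $(P4)$, the relative Kleiman criterion, the two-variable Hilbert polynomial $\Pi(p,q)$ via analytic Kawamata--Viehweg vanishing, and a Diophantine argument along the line $q=rp$ --- is exactly the route those references take, and your Steps 1 and 2 (and the density/degree-descent argument at the start of Step 3) are sound.

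The genuine gap is in the endgame of Step 3, in two respects. First, you propose to feed $D(p,q_p)$ into the relative basepoint-free theorem (Theorem \ref{Base Point Free Theorem}), but that theorem requires its input divisor to already be nef over $W$, whereas $D(p,q_p)$ is by construction \emph{not} $\pi$-nef over $W$ (that is the whole point of choosing $q_p>rp$); so it cannot be invoked as a black box. The classical proof instead re-runs the concentration-of-multiplicity/log-resolution machinery from the \emph{proof} of the basepoint-free theorem on the non-nef divisor $D(p,q_p)$, shows its stable base locus over $W$ is empty for suitable $(p,q_p)$, and concludes that $D(p,q_p)$ is $\pi$-nef over $W$ --- which contradicts $q_p/p>r$ and the maximality of $r$. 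Second, the contradiction you actually state --- the existence of a projective curve $C$ over $W$ with $(H+r(K_X+\Gamma))\cdot C=0$ and $(K_X+\Gamma)\cdot C<0$ --- is not a contradiction: such an extremal curve satisfies $(H+t(K_X+\Gamma))\cdot C<0$ precisely for $t>r$, so it is entirely consistent with (indeed witnesses) the maximality of $r$, and its existence is what the cone theorem later asserts. As written, Step 3 therefore does not close the argument; it needs to be replaced by the genuine "base locus is eventually empty, hence nef beyond the threshold" contradiction of \cite[Theorem 3.5]{KM98}, \cite[Theorem 4-1-1]{KMM87}, carried out with the analytic vanishing and coherence inputs of \cite{Na87,Fu22,DHP24}.
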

 \begin{theo}[Basepoint-free theorem, {\cite[Theorem 4.10]{Na87}, \cite[Theorem 6.2]{Fu22}}]\label{Base Point Free Theorem}
 Let $D$ be a Cartier divisor on $X$ and nef over $W$. Assume that $aD-(K_{X}+\Gamma)$ is $\pi$-ample for some positive real number $a$. Then there exists an open neighborhood $U$ of $W$ and a positive integer $m_0$ such that $$\pi^*\pi_*\mathcal{O}_{X}(mD)\rightarrow \mathcal{O}_{X}(mD)$$ is surjective over $U$ for any $m\ge m_0$.
\end{theo}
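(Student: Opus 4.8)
The plan is to run Kawamata and Shokurov's ``$X$-method'' (as in \cite{Ka85}) in the relative complex-analytic setting, replacing the coherence and vanishing inputs of the algebraic proof by their analytic counterparts over a Stein neighborhood of $W$: Cartan's Theorems $A$ and $B$ (Theorem \ref{cartanA}), Grauert's coherence and direct-image theorems, and the relative Kawamata--Viehweg vanishing over Stein bases of \cite{Na87,Fu22}. After shrinking $Y$ to a Stein open neighborhood of $W$ --- so that $Y$ is Stein and every coherent sheaf on it is globally generated --- the conclusion that $\pi^{*}\pi_{*}\mathcal{O}_{X}(mD)\to\mathcal{O}_{X}(mD)$ is surjective over some neighborhood of $W$ is equivalent to the relative base locus $\mathrm{Bs}_{\pi}|mD|$ being disjoint from $\pi^{-1}(W)$. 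By $(P4)$ the descending chain of analytic subsets $\mathrm{Bs}_{\pi}|m!\,D|$ has only finitely many components meeting $W$ and so stabilizes along a neighborhood of $W$; hence it suffices to show that for every $x\in\pi^{-1}(W)$ there is an $m$ with $x\notin\mathrm{Bs}_{\pi}|mD|$, after which a standard semigroup argument promotes ``all sufficiently divisible $m$'' to ``all $m\ge m_{0}$'' with a single $m_{0}$ and a single $U$. The whole argument will be an induction on $\dim X$, the theorem itself serving as the inductive statement.

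\emph{Non-vanishing.} Since $D$ is $\pi$-nef and $aD-(K_{X}+\Gamma)$ is $\pi$-ample, $pD-(K_{X}+\Gamma)$ is $\pi$-ample for every real $p\ge a$. I would then invoke the relative analytic non-vanishing theorem --- the analogue of Shokurov's non-vanishing theorem, itself a consequence of relative Kawamata--Viehweg vanishing and Cartan $A$ --- to obtain an integer $m_{1}>0$ with $\pi_{*}\mathcal{O}_{X}(m_{1}D)\neq 0$ over a neighborhood of $W$, and in fact $\pi_{*}\mathcal{O}_{X}(mD)\neq 0$ for all sufficiently divisible $m$.

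\emph{Base-locus reduction.} Fix $x\in\pi^{-1}(W)\cap\mathrm{Bs}_{\pi}|m_{1}D|$; if no such $x$ exists we are done. For $N\gg 0$ choose a log resolution $\mu\colon\widetilde{X}\to X$ of $(X,\Gamma)$ together with the relative base ideal of $|Nm_{1}D|$, and --- using a general relative member of $|Nm_{1}D|$, a small multiple $\epsilon A$ of a $\pi$-ample divisor $A$, and Kawamata's tie-breaking trick --- build a boundary $\Gamma'\ge\Gamma$ with $\Gamma'-\Gamma$ relatively $\mathbb{Q}$-linearly equivalent to $cD+\epsilon A$ for some $c>0$, $0<\epsilon\ll 1$, such that $(X,\Gamma')$ is log canonical but not klt with a unique non-klt center $Z\ni x$ satisfying $Z\subset\mathrm{Bs}_{\pi}|m_{1}D|$ and $\dim Z<\dim X$. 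For $m\gg 0$ the divisor $mD-(K_{X}+\Gamma')$ is relatively $\mathbb{Q}$-linearly equivalent to $(m-c-a)D+\bigl(aD-(K_{X}+\Gamma)\bigr)-\epsilon A$, which is $\pi$-ample; hence relative Kawamata--Viehweg (Nadel) vanishing on $\widetilde{X}$, pushed down to $X$, gives $R^{1}\pi_{*}\bigl(\mathcal{I}_{Z}\otimes\mathcal{O}_{X}(mD)\bigr)=0$ over a neighborhood of $W$, where $\mathcal{I}_{Z}$ is the ideal sheaf cut out by this non-klt configuration. From
$$0\longrightarrow \mathcal{I}_{Z}\otimes\mathcal{O}_{X}(mD)\longrightarrow \mathcal{O}_{X}(mD)\longrightarrow \mathcal{O}_{Z}(mD)\longrightarrow 0$$
the map $\pi_{*}\mathcal{O}_{X}(mD)\to\pi_{*}\mathcal{O}_{Z}(mD)$ is surjective near $W$; by the inductive hypothesis --- the theorem applied to the lower-dimensional $Z$, to which a klt structure is transferred by Kawamata's subadjunction and on which $mD|_{Z}$ is again $\pi$-nef with $mD|_{Z}-(K_{Z}+\Gamma_{Z})$ $\pi$-ample --- $\pi_{*}\mathcal{O}_{Z}(mD)$ is $\pi$-globally generated near $W$ (the base case $\dim Z=0$ being immediate from Cartan $A$, since $(P4)$ makes $Z$ a finite set over $W$). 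Lifting a section nonzero at $x$ gives $x\notin\mathrm{Bs}_{\pi}|mD|$; assembling over the finitely many components of the successive base loci via $(P4)$ yields the uniform $U$ and $m_{0}$.

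The main obstacle I anticipate is the analytic bookkeeping inside the non-klt-center construction: every divisor, ideal sheaf and resolution must be kept defined over one fixed Stein open neighborhood of $W$; ``general member'' of the possibly infinite-dimensional relative linear systems must be read off the coherent relative base ideal rather than from bases of global sections; and relative Kawamata--Viehweg vanishing must be applied with exactly the nef-and-big (here $\pi$-ample) data on $\widetilde{X}$ over $W$. Above all, turning the pointwise conclusions into a \emph{uniform} neighborhood $U$ and exponent $m_{0}$ --- rather than $x$- or $m$-dependent ones --- is the delicate point, and this is precisely where condition $(P4)$ (finiteness of connected components of analytic subsets meeting $W$) is indispensable: it makes the Noetherian-style induction on the base locus terminate along a common neighborhood of $W$.
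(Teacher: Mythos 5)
The paper does not prove this statement at all: Theorem \ref{Base Point Free Theorem} is imported verbatim from the analytic minimal model program literature (\cite[Theorem 4.10]{Na87}, \cite[Theorem 6.2]{Fu22}) and is used as a black box in Section 3, so there is no in-paper argument to compare yours against. What you have written is essentially a reconstruction of the proof in the cited sources: the Kawamata--Shokurov $X$-method run relatively over a Stein neighborhood of $W$, with Cartan's Theorem $A$, Grauert coherence, relative Kawamata--Viehweg vanishing, and condition $(P4)$ supplying the Noetherian-type termination. That is the right skeleton, and your identification of the two genuinely delicate analytic points --- reading ``general members'' off the coherent relative base ideal rather than off global sections, and extracting a single uniform $U$ and $m_0$ via $(P4)$ --- matches where the real work is done in \cite{Fu22}.

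Two caveats on the details. First, your inductive step passes through a non-klt center $Z$ of possibly high codimension and invokes Kawamata's subadjunction to equip $Z$ with a klt structure; the proofs in \cite{Na87,Fu22} (following the classical algebraic argument) instead arrange, by perturbing coefficients on a fixed log resolution $\mu\colon\widetilde X\to X$, that exactly one $\mu$-exceptional or strict-transform \emph{divisor} $F$ has log discrepancy $0$, and then apply vanishing and the inductive hypothesis to $F$ with the structure given by ordinary adjunction. Subadjunction for higher-codimensional centers carries a moduli part and is a substantially deeper input whose availability in the relative analytic category you would need to justify; it is avoidable and should be avoided here. Second, the relative analytic non-vanishing theorem is itself a nontrivial theorem (it occupies its own section in \cite{Fu22}) and cannot be dismissed as ``a consequence of relative Kawamata--Viehweg vanishing and Cartan $A$''; if you were writing this proof out in full, that step would need either its own induction or a reduction to the algebraic statement. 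With those two repairs your outline agrees with the proof in the references.
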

\begin{theo}[Cone and contraction theorem, {\cite[Theorem 4.12]{Na87}, \cite[Theorem 7.2]{Fu22}}]\label{Cone and Contraction Theorem}
Assume that $\pi: X\rightarrow Y$ and $W$ satisfy conditions $(P)$. Then we have the decomposition of Kleiman--Mori cone 
$$\overline{NE}(X/Y;W)=\overline{NE}(X/Y;W)_{K_X+\Gamma\geq0}+\Sigma_{j}R_j$$ with the following properties.
Let $R$ be a $(K_X+\Gamma)$-negative extremal ray. Then, after one shrinks the base $Y$ around $W$ suitably, there exists a contraction morphism $\phi_R: X\rightarrow Z$ over $Y$ such that
\begin{enumerate}[$(1)$]
\item Let $C$ be an integral projective curve on $X$ such that $\pi(C)$ is a point in $W$. Then $\phi_{R}(C)$ is a point if and only if $[C]\in R$;
\item $\phi_{{R}_{*}}{(\mathcal{O}_X)}\cong \mathcal{O}_Z$;
\item Let $L$ be a line bundle on $X$ such that $L\cdot C=0$ for every curve $C$ with $[C]\in R$. Then there is a line bundle $M$ on $Z$ such that $\phi_{R}^*M\cong L$.
\end{enumerate}
\end{theo}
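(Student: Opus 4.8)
The plan is to follow the classical template for the Cone and Contraction Theorem, now in the analytic relative category, treating the Rationality Theorem (Theorem \ref{rt}) and the Basepoint-free Theorem (Theorem \ref{Base Point Free Theorem}) as the two analytic black boxes, and using that $(P4)$ makes $N_1(X/Y;W)$ finite-dimensional together with the relative Kleiman ampleness criterion over $W$ (after shrinking $Y$ around $W$, a Cartier divisor is $\pi$-ample over $W$ exactly when its class is strictly positive on $\overline{NE}(X/Y;W)\setminus\{0\}$), both of which belong to the same Nakayama--Fujino development.

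\emph{Cone decomposition.} Fix a $\pi$-ample Cartier divisor $H$ on $X$. I would first establish, by the standard accumulation argument of the Cone Theorem, that only finitely many $(K_X+\Gamma)$-negative extremal rays lie outside any fixed neighborhood of the hyperplane $(K_X+\Gamma)^{\perp}$: were there infinitely many distinct such rays, their generators would accumulate in the compact slice $\{H\cdot z=1\}$ at a point still in the negative part, and applying the refined Rationality Theorem to the perturbed polarizations $H+t(K_X+\Gamma)$ would force a sequence of nef thresholds with uniformly bounded denominators, a contradiction. Hence the $(K_X+\Gamma)$-negative extremal rays form a countable family $\{R_j\}$, locally discrete away from $(K_X+\Gamma)^{\perp}$; one then checks that $\overline{NE}(X/Y;W)_{K_X+\Gamma\geq0}+\sum_j R_j$ is closed and equals the whole cone, and reads off the rationality of each $R_j$ and of the supporting hyperplanes of the $(K_X+\Gamma)$-negative extremal faces from the Rationality Theorem.

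\emph{Supporting function and contraction.} Given a $(K_X+\Gamma)$-negative extremal ray $R$, I would construct a Cartier divisor $D$, $\pi$-nef over $W$, with $D^{\perp}\cap\overline{NE}(X/Y;W)=R$ and with $aD-(K_X+\Gamma)$ $\pi$-ample over $W$ for some positive rational $a$: take a rational supporting hyperplane of the face $R$, clear denominators to get a $\pi$-nef Cartier divisor $D_0$ vanishing exactly along $R$, and then, since $D_0$ is strictly positive on $\overline{NE}\setminus R$ while $K_X+\Gamma$ is negative on $R$, use the finite-dimensionality of $N_1$ and the Kleiman criterion to pick $\ell\gg0$ and $a$ so that $D:=\ell D_0$ works. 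Applying the Basepoint-free Theorem to $D$ then gives, after shrinking $Y$ around $W$, some $m_0$ with $\pi^{*}\pi_{*}\mathcal{O}_X(mD)\to\mathcal{O}_X(mD)$ surjective for all $m\geq m_0$; this defines a morphism over $Y$ to $Z:=\operatorname{Proj}_Y\bigoplus_{m\geq0}\pi_{*}\mathcal{O}_X(mm_0D)$, and replacing $Z$ by the Stein factorization of $X\to Z$ one may assume $\phi_{R\,*}\mathcal{O}_X\cong\mathcal{O}_Z$, which is $(2)$. A projective integral curve $C$ with $\pi(C)\in W$ is contracted by $\phi_R$ iff $D\cdot C=0$, i.e. iff $[C]\in D^{\perp}\cap\overline{NE}(X/Y;W)=R$, which is $(1)$. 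For $(3)$, if $L\cdot C=0$ for every $[C]\in R$, then for a large multiple $m$ of $m_0$ the line bundle $mD+L$ is $\pi$-nef over $W$ with $a(mD+L)-(K_X+\Gamma)$ $\pi$-ample over $W$ for a suitable $a$, hence $\phi_R$-free by the Basepoint-free Theorem; being also $\phi_R$-trivial it is pulled back from $Z$ by cohomology and base change together with $\phi_{R\,*}\mathcal{O}_X\cong\mathcal{O}_Z$, and since $m_0D$ is itself pulled back from $Z$ by construction, subtracting exhibits $L$ as $\phi_R^{*}M$ for a line bundle $M$ on $Z$.

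The step I expect to be the real obstacle is the finiteness/local polyhedrality in the cone decomposition. Over a field this rests on bend-and-break length estimates for extremal rays; in the analytic relative setting one must instead extract everything from the Rationality Theorem and the finiteness built into $(P4)$, exactly as in Nakayama's and Fujino's treatments, and keep the various ``shrink $Y$ around $W$'' operations mutually compatible. Once the cone structure and the two basepoint-free/vanishing inputs are available, the supporting-function construction, the Stein factorization, and the line-bundle descent in $(3)$ are essentially formal.
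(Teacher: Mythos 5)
This theorem is quoted by the paper from Nakayama and Fujino with no in-paper proof, so there is nothing internal to compare against; your sketch reproduces the standard Kawamata--Shokurov-style derivation of the cone and contraction theorem from the rationality and basepoint-free theorems, which is essentially how \cite[Theorem 7.2]{Fu22} and \cite[Theorem 4.12]{Na87} are actually proved. Two points worth flagging: the accumulation argument in your cone decomposition needs the quantitative form of the rationality theorem (the uniform bound on the denominator of the nef threshold), which the cited sources provide but the version stated in this paper (Theorem \ref{rt}) omits, so you are right to call it the ``refined'' statement; and in the analytic category one should replace $\operatorname{Proj}_Y$ of the section algebra by the image of the morphism to $\mathbb{P}(\pi_*\mathcal{O}_X(mD))$ over a Stein neighborhood of $W$ followed by Stein factorization, which is exactly how the contraction is produced in the references.
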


\section{Global stability of nefness: Proofs of Main Theorems \ref{adjoint nef limit} and  \ref{kahler}}
Much inspired by the nef-value function method of Wi\'sniewski \cite{Wi91b,Wi09}, we use the contraction theorem of projective morphism between analytic spaces to obtain the invariance of the nef-value function $\eta(t)$ and thus the density of the nef locus of the adjoint canonical line bundle, to conclude the proof of Theorem \ref{adjoint nef limit}. And much inspired by Andreatta--Peternell's deformation theoretical approach \cite{AP97}, we use the contraction theorem in K\"ahler threefolds and deform the $K_{X}$-negative rational curve to the nearby fibers to prove Theorem \ref{kahler}. We will divide this section into two subsections according to these two cases.

\subsection{Stability of nefness for smooth projective families} 
Let $\pi:\mathcal{X}\rightarrow \Delta$ be a smooth projective morphism with fiber $X_t:=\pi^{-1}(t)$ for $t\in \Delta$ and $D$ a $\mathbb{Q}$-Cartier divisor which contains no fibers of $\pi$ on $\mathcal{X}$ such that $(\mathcal{X}, D)$ is a klt pair. And let $A$ be a smooth $\pi$-very ample divisor on $\mathcal{X}$. Denote by $K_{X_t}+D_t$ the restriction $(K_{\mathcal{X}}+D)|_{X_t}$ and similarly for others. The \emph{nef-value function} of $K_{\mathcal{X}}+D$ on $\Delta$ respect to $A$ is defined by $$\eta(t)=\inf\{s\in \mathbb{Q}_{\geq0}\ |\ K_{X_t}+D_t+sA_t \ \text{is ample}\}.$$ Then the rationality Theorem \ref{rt} tells us that $\eta(t)$ is a rational number. And it is easy to check:
\begin{lemm}\label{notnef} For any $t\in\Delta$, 
$K_{X_t}+D_t$ is not nef if and only if $\eta(t)>0$. 
\end{lemm}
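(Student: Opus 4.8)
The plan is to read the equivalence straight off the definition of $\eta(t)$, using only three elementary facts about the smooth projective fiber $X_t$: first, $A_t$ is ample on $X_t$ (since $A$ is $\pi$-very ample); second, $K_{X_t}+D_t$ is $\mathbb{Q}$-Cartier, because $(\mathcal{X},D)$ is klt and hence $K_{\mathcal{X}}+D$ is $\mathbb{Q}$-Cartier; and third, on the projective manifold $X_t$ a $\mathbb{Q}$-Cartier divisor is nef precisely when it meets every projective curve non-negatively, by \cite[Corollary 1]{Pa98}. I would start with two preliminary remarks. (a) The parameter set $S_t:=\{s\in\mathbb{Q}_{\ge0}\mid K_{X_t}+D_t+sA_t\ \text{is ample}\}$ is non-empty and is an up-set in $\mathbb{Q}_{\ge 0}$: non-empty because $K_{X_t}+D_t+sA_t$ is ample for all $s\gg0$ ($A_t$ being ample), and an up-set because for $s'\ge s$ in $S_t$ one has $K_{X_t}+D_t+s'A_t=(K_{X_t}+D_t+sA_t)+(s'-s)A_t$, a sum of two ample $\mathbb{Q}$-divisors, hence ample. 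Consequently $\eta(t)=\inf S_t\in\mathbb{Q}_{\ge0}$ is finite, and $K_{X_t}+D_t+sA_t$ is ample for every rational $s>\eta(t)$. (b) The sum of a nef and an ample $\mathbb{Q}$-divisor on a projective manifold is again ample (Nakai--Moishezon).

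Next I would prove the two implications. If $K_{X_t}+D_t$ is nef, then by remark (b) the divisor $K_{X_t}+D_t+sA_t$ is ample for every rational $s>0$, so $\mathbb{Q}_{>0}\subseteq S_t$ and hence $\eta(t)=0$. Conversely, suppose $\eta(t)=\inf S_t=0$; choose $s_j\in S_t$ with $s_j\to 0$, so that each $K_{X_t}+D_t+s_jA_t$ is ample. Then for any fixed projective curve $C\subset X_t$ one has $(K_{X_t}+D_t)\cdot C=(K_{X_t}+D_t+s_jA_t)\cdot C-s_j\,(A_t\cdot C)>-s_j\,(A_t\cdot C)$, and letting $j\to\infty$ gives $(K_{X_t}+D_t)\cdot C\ge 0$; by \cite[Corollary 1]{Pa98} this shows $K_{X_t}+D_t$ is nef. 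Taking contrapositives, these two implications together say precisely that $K_{X_t}+D_t$ fails to be nef if and only if $\eta(t)>0$.

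The argument is purely formal, so I do not anticipate a genuine obstacle; the only points needing (minor) care are that $X_t$ is projective — so that nefness is detected by curves and the nef cone is the closure of the ample cone — that $K_{X_t}+D_t$ is $\mathbb{Q}$-Cartier, and that $S_t$ is an up-set, so that $\eta(t)$ genuinely marks the boundary between the ample and the non-ample range of the pencil $K_{X_t}+D_t+sA_t$.
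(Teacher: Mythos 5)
Your proof is correct, and the paper itself gives no argument here — it simply states the lemma is ``easy to check'' — so your write-up supplies exactly the standard verification one would expect: nef plus ample is ample for the forward direction, and detecting nefness by intersecting with curves on the projective fiber $X_t$ for the converse. No gaps; the only hypotheses you use ($A_t$ ample, $K_{X_t}+D_t$ $\mathbb{Q}$-Cartier, $X_t$ projective) are all available in the paper's setup.
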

\begin{lemm}\label{inequality}
Let $D$ be $\pi$-nef over a compact subset $W$ of $\Delta$ and $R$ a $(K_{\mathcal{X}}+D)$-negative extremal ray of Kleiman--Mori cone $\overline{NE}(\mathcal{X}/\Delta;W)$. Let $F$ be an irreducible component of a non-trivial fiber of the contraction of $R$. Then 
$$\dim F+\dim(\mathcal{L}(R))\geq \dim \mathcal{X}=\dim X_0+1,$$
where $\mathcal{L}(R)$ is the locus of curves from $R$.
\end{lemm}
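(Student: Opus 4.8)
The lemma is the relative, complex‑analytic incarnation of Wi\'sniewski's fibre‑locus inequality for extremal contractions \cite{Wi91b,Wi09}, so the plan is to prove the sharper bound $\dim F+\dim\mathcal{L}(R)\ge\dim\mathcal{X}+l(R)-1$, where $l(R):=\min\{-K_{\mathcal{X}}\cdot C\mid [C]\in R,\ C\ \text{rational},\ \pi(C)\in W\}$, and then to observe that $l(R)\ge 1$. Indeed every curve $C$ with $[C]\in R$ has $\pi(C)\in W$, hence $D\cdot C\ge 0$ because $D$ is $\pi$-nef over $W$; since $R$ is $(K_{\mathcal{X}}+D)$-negative, $-K_{\mathcal{X}}\cdot C=-(K_{\mathcal{X}}+D)\cdot C+D\cdot C$ is a positive integer, hence $\ge 1$. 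I would also record at the outset that, $\phi_R$ being a morphism over $\Delta$, the contracted fibre $\phi_R^{-1}(z)\supseteq F$ lies in a single fibre $X_t$ of $\pi$ with $t\in W$, that $-K_{\mathcal{X}}$ is $\phi_R$-ample, and that $F$ is positive-dimensional.

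\emph{Vertical estimate.} Fix a general point $x\in F$. Since $F$ is a non-trivial fibre of $\phi_R$ and $-K_{\mathcal{X}}$ is $\phi_R$-ample, $F$ is covered by rational curves of $R$, by the analytic bend-and-break available in the minimal model program of \cite{Na87,Fu22,DHP24}; choose $C\ni x$ with $-K_{\mathcal{X}}\cdot C=:d$ minimal among rational curves of $R$ through $x$, so $d\ge l(R)\ge 1$. Standard deformation theory of the normalisation $f\colon\mathbb{P}^1\to C$ gives $\dim_{[f]}\mathrm{Hom}(\mathbb{P}^1,\mathcal{X};0\mapsto x)\ge\chi(\mathbb{P}^1,f^*T_{\mathcal{X}})-\dim\mathcal{X}=-K_{\mathcal{X}}\cdot C=d$, so after dividing by the $2$-dimensional group $\mathrm{Aut}(\mathbb{P}^1,0)$ the component $V_x$ through $[C]$ of the space of rational curves of $R$ through $x$ has $\dim V_x\ge d-2$. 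Minimality of $d$ makes $V_x$ unsplit: a reducible degeneration would be a connected chain of rational curves, each with class in $R$ by extremality of $R$, hence each of $(-K_{\mathcal{X}})$-degree $\ge l(R)\ge 1$, so the chain component through $x$ would have degree $<d$; the same estimate excludes multiple covers. Hence the evaluation from the universal family over a top-dimensional component of $V_x$ is generically finite onto its image --- otherwise $x$ and a general point of that image would lie on a positive-dimensional family of curves of $V_x$, which bend-and-break would degenerate, again contradicting minimality --- and that image is an irreducible closed subset of $\phi_R^{-1}(z)$ through the general point $x$ of $F$, hence contained in $F$. Thus $\dim F\ge\dim V_x+1\ge d-1$.

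\emph{Horizontal estimate.} Let $V$ be the irreducible component through $[C]$ of the analytic space of rational curves on $\mathcal{X}$ (a relative Barlet/Douady space over $\Delta$). Numerical classes are locally constant in flat families, so every member of $V$ has class $[C]\in R$ and $\mathrm{Locus}(V)\subseteq\mathcal{L}(R)$; the unpointed deformation estimate gives $\dim V\ge\chi(\mathbb{P}^1,f^*T_{\mathcal{X}})-3=-K_{\mathcal{X}}\cdot C+\dim\mathcal{X}-3=d+\dim\mathcal{X}-3$. Applying upper semicontinuity of fibre dimension to the evaluation $e\colon\mathcal{U}(V)\to\mathrm{Locus}(V)$, with $\dim\mathcal{U}(V)=\dim V+1$ and the map $e^{-1}(x)\to V_x$ having finite fibres (each curve being non-constant), gives
\[
\dim V_x=\dim e^{-1}(x)\ \ge\ \dim V+1-\dim\mathrm{Locus}(V)\ \ge\ d+\dim\mathcal{X}-2-\dim\mathcal{L}(R).
\]
Combining with the vertical estimate $\dim F\ge\dim V_x+1$ I obtain $\dim F+\dim\mathcal{L}(R)\ge\dim\mathcal{X}+d-1\ge\dim\mathcal{X}$, as claimed.

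\emph{Expected main difficulty.} The numerology above is the classical Mori-theoretic one; the real work is to carry it out over the disk. I will need the existence and the expected dimension theory of the $\mathrm{Hom}$-, rational-curve- and Barlet-type moduli attached to the projective morphism $\mathcal{X}\to\Delta$ (after shrinking $\Delta$ around $W$), together with the analytic forms of bend-and-break and of the statement that non-trivial fibres of $(K_{\mathcal{X}}+D)$-negative extremal contractions are covered by rational curves of the ray; all of these should be extractable from the analytic minimal model program of Nakayama, Fujino and Das--Hacon--P\u{a}un cited above. The one bookkeeping point to watch is that every curve produced in these arguments is mapped by $\pi$ into $W$, so that it genuinely lies in $\overline{NE}(\mathcal{X}/\Delta;W)$ and in $R$ --- which holds automatically here, since all such curves lie in the contracted fibre $F\subset\phi_R^{-1}(z)$.
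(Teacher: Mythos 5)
Your proposal is correct and follows essentially the same route as the paper's proof: fix a general point $x\in F$ and a rational curve of minimal anticanonical degree through $x$, apply the Mori--Koll\'ar deformation-dimension estimate to the pointed and unpointed families of its deformations, use bend-and-break to force the locus swept by the pointed family to have dimension at most $\dim F-1$, and conclude by the fiber-dimension count for the evaluation map onto $\mathcal{L}(R)$. The only differences are cosmetic: you minimise $-K_{\mathcal{X}}\cdot C$ rather than $-(K_{\mathcal{X}}+D)\cdot C$ and keep the length $l(R)$ in the final inequality, which the paper records separately in the remark following the lemma.
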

\begin{proof}
The proof is almost the same as those of \cite[(0.4) Theorem]{in} and \cite[(1.1) Theorem]{Wi91a} (based on Mori's seminal work \cite{Mo79}), but for reader's convenience, we include a proof here. 

Let $C$ be a rational curve in $\mathcal{X}$. Then $\pi|_{C}:C\rightarrow \Delta$ is a constant holomorphic map by maximal principle, i.e., any rational curve lies in one fiber of $\pi$.
Let $x\in F$ be a general point of $F$ and $C_0\subset F$ a rational curve through $x$ such that the intersection number $-(K_\mathcal{X}+D)\cdot C_0$ is minimal among all rational curves contained in $F$ and containing $x$. Denote by $T$ the irreducible variety parameterizing deformations of the curve $C_0$. 
Set 
$$V=\{(x,t)\in \mathcal{X}\times T:x\ \text{lies on the rational curve parameterized by}\ t\}$$ and then the diagram 
$$
\begin{tikzcd}
V \arrow[r, "q"] \arrow[d, "p"'] & T \\
\mathcal{X} &
\end{tikzcd}
$$ 
follows. 
Since $D$ is $\pi$-nef, $D\cdot C_0\geq 0$. So \cite[1.14 Theorem, 1.17 Remark of Chapter II]{Ko96} give 
$$\dim T\geq \dim \mathcal{X}-K_{\mathcal{X}}\cdot C_0-3= \dim \mathcal{X}-(K_{\mathcal{X}}+D)\cdot C_0+D\cdot C_0-3$$ which means
$$\dim T\geq\dim \mathcal{X}-2.$$
And as the fibers of $q$ are of dimension $1$, we get
$$\dim V\geq \dim \mathcal{X}-1.$$

Let $E$ be an irreducible component of the fiber $p^{-1}(x)$. Then the curves parameterized by the set $q(E)$ pass through $x$ and are contained in $F$. If among curves parameterized by $q(E)$ we find a continuous family of curves passing through a point $x'$ different from $x$, then all these rational curves and $x'$ are in the smooth projective fiber $\pi^{-1}(x)$. So the famous bend and break theorem by Mori implies that the curve $C_0$ could be broken into a sum of curves lying in $F$ and $-(K_{\mathcal{X}}+D)\cdot C_0$ would not be minimal. Therefore, the map $p$ restricted to $q^{-1}(q(E))$ is generically finite into $F$ and 
$$\dim E=\dim q^{-1}(q(E))-1\leq \dim F-1.$$

Finally, since the map $p$ from the irreducible variety $V$ has a fiber of dimension $\leq \dim F-1$ it follows that $$\dim \mathcal{L}(R)\geq \dim V-\dim F+1\geq \dim\mathcal{X}-\dim F.$$
\end{proof}
\begin{rema} With the same setting as Lemma \ref{inequality}, 
it is easy to see the inequality from the above proof 
$$\dim F+\dim(\mathcal{L}(R))\geq \dim \mathcal{X}+\ulcorner{l(R)}\urcorner-1,$$
where $\ulcorner{l(R)}\urcorner$ is the round up of the length $$l(R):=\min\{-(K_{\mathcal{X}}+D)\cdot C: \text{$C$ rational curve and it numerical class $[C]\in R$}\}$$ of the extremal ray $R$.     
\end{rema}

\begin{lemm}\label{locus lemma}
Let $D_0$ be nef on $X_0$ and $R$ a $(K_{\mathcal{X}}+D)$-negative extremal ray of Kleiman--Mori cone $\overline{NE}(\mathcal{X}/\Delta;W)$ with $W:=\{t\in\mathbb{C}\ |\ |t|\leq 1/2\}$. Assumes that the locus of curves from $R$ has non-empty intersection with the fiber $X_0$.
Then after one shrinks $\Delta$ around $W$, the locus of curves from $R$ dominates some small open neighborhood of the origin $0$. 
\end{lemm}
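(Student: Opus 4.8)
The goal is to show that the locus $\mathcal{L}(R)$ of the $R$-curves, which by hypothesis meets $X_0$, actually surjects onto a neighborhood of $0\in\Delta$ under $\pi$. Let me think about the strategy.

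First, I'd let $\phi_R\colon\mathcal{X}\to\mathcal{Z}$ be the contraction of $R$ over (a shrunk) $\Delta$, furnished by Theorem \ref{Cone and Contraction Theorem}. By the earlier observation (used in the proof of Lemma \ref{inequality}), every rational curve lies in a single fiber of $\pi$; in particular every curve in class $R$ lies in some $X_t$, so $\mathcal{L}(R)=\bigcup_{t\in\pi(\mathcal{L}(R))}\mathcal{L}(R)\cap X_t$, and $\mathcal{L}(R)$ is $\phi_R$-covered by a component of a non-trivial fiber of $\phi_R$. The hypothesis says $\mathcal{L}(R)\cap X_0\neq\emptyset$, so there is a non-trivial fiber $F$ of $\phi_R$ contained in $X_0$, i.e. $\phi_R$ is not an isomorphism over the corresponding point of $\mathcal{Z}_0:=\phi_R(X_0)$.

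The key step is a dimension count combining Lemma \ref{inequality} with the relative structure over $\Delta$. Let $F\subset X_0$ be an irreducible component of a non-trivial fiber of $\phi_R$ meeting $X_0$, of dimension $\dim F$. Lemma \ref{inequality} gives $\dim F+\dim\mathcal{L}(R)\geq\dim\mathcal{X}=n+1$, where $n=\dim X_0$. On the other hand, the exceptional locus $\mathrm{Exc}(\phi_R)$ (equivalently $\mathcal{L}(R)$) maps to $\Delta$; if $\pi(\mathcal{L}(R))$ were just the point $\{0\}$, then $\mathcal{L}(R)\subset X_0$, so $\dim\mathcal{L}(R)\leq n-1$ unless $\mathcal{L}(R)=X_0$, and $\dim F\leq\dim\mathcal{L}(R)\leq n-1$ as well, whence $\dim F+\dim\mathcal{L}(R)\leq 2n-2$. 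This contradicts $\dim F+\dim\mathcal{L}(R)\geq n+1$ once $2n-2<n+1$, i.e. $n<3$, which is too weak; so a pure dimension count over $\{0\}$ alone does not suffice, and I need to rule out the case $\mathcal{L}(R)=X_0$ (a fiber-type contraction with exceptional locus all of $X_0$) separately, as well as the intermediate cases. Here is where I'd use that $D_0=D|_{X_0}$ is nef and $(K_{\mathcal{X}}+D)\cdot C<0$ for $[C]\in R$: restricting to $X_0$, $(K_{X_0}+D_0)\cdot C<0$, so the curves $C$ sweeping out $\mathcal{L}(R)\cap X_0$ are $(K_{X_0}+D_0)$-negative; since $D_0$ is nef, they are $K_{X_0}$-negative rational curves in the fixed projective manifold $X_0$. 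If $\mathcal{L}(R)\cap X_0$ did not fill out $X_0$, i.e. $\mathcal{L}(R)\cap X_0\subsetneq X_0$, then $\mathcal{L}(R)\cap X_0$ is the image in $X_0$ of the exceptional locus of a birational or divisorial-type contraction localised on $X_0$; one then plays the bend-and-break / deformation argument of Lemma \ref{inequality} with $x$ a general point, but now allowing the parameterising variety $T$ of deformations of $C_0$ inside $\mathcal{X}$ (not just inside $F$) to move the curve off $X_0$.

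So the heart of the matter is the deformation argument. Fix a general point $x\in\mathcal{L}(R)\cap X_0$ and a rational curve $C_0$ through $x$ in class proportional to $R$ with $-(K_{\mathcal{X}}+D)\cdot C_0$ minimal. Let $T$ parameterise deformations of $C_0$ in $\mathcal{X}$ through $x$ (not constrained to $F$). By \cite[II.1.14 Theorem, II.1.17 Remark]{Ko96} and $D\cdot C_0\geq 0$ (from $D$ being $\pi$-nef over $W$), $\dim_{[C_0]}T\geq \dim\mathcal{X}-(K_{\mathcal{X}}\cdot C_0)-3\geq -(K_{\mathcal{X}}+D)\cdot C_0-2\cdot\{\text{correction}\}$, concretely $\dim T\geq n+1-2=n-1$ after fixing the point $x$ (the standard count gives $\dim_x T\ge -K_{\mathcal{X}}\cdot C_0-2\geq \ulcorner l(R)\urcorner-2+\dots$; I'd just invoke the same estimate as in Lemma \ref{inequality} minus one for fixing $x$). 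Now consider the universal family $V\to T$ with evaluation $p\colon V\to\mathcal{X}$; its image is contained in $\mathcal{L}(R)$ and contains $x$. Composing with $\pi$, we get $\pi\circ p\colon V\to\Delta$. Since deformations of $C_0$ near $x$ in the \emph{smooth} total space $\mathcal{X}$ are genuinely $(n{-}1)$-dimensional modulo the curve and all these curves pass through $x$, if $\pi\circ p$ were constant $=0$ then the whole $(n{-}1+1)=n$-dimensional family $V$ would live in $X_0$, giving an $n$-dimensional family of rational curves through the fixed general point $x$ in the $n$-fold $X_0$ — forcing, by bend-and-break, $C_0$ to degenerate, contradicting minimality of $-(K_{\mathcal{X}}+D)\cdot C_0$ (exactly as in the last paragraph of the proof of Lemma \ref{inequality}), unless again $\mathcal{L}(R)\cap X_0 = X_0$, which I handle as above. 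Hence $\pi\circ p$ is non-constant, so its image is an open neighborhood of $0$ in $\Delta$ (a non-constant holomorphic map from an irreducible variety to a disk is open onto a neighborhood of any point in its image, after shrinking), and therefore $\mathcal{L}(R)\supset p(V)$ dominates a neighborhood of $0$.

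\textbf{The main obstacle} I anticipate is book-keeping the dimension estimate for $T$ when the point $x$ is held fixed and, in the boundary case $\mathcal{L}(R)\cap X_0 = X_0$ (a Mori fibre space-type contraction of $X_0$, e.g. $\phi_R$ contracts $X_0$ to a point of $\mathcal{Z}$), showing $\mathcal{L}(R)$ is not contained in $X_0$. In that last case one should argue that $\mathrm{Exc}(\phi_R)=\mathcal{L}(R)$ is a closed analytic subset of $\mathcal{X}$ with $\mathcal{L}(R)\cap X_0=X_0$; since $\pi$ is a submersion and $X_0$ is a divisor in $\mathcal{X}$, an analytic subset containing the divisor $X_0$ either equals $X_0$ or has dimension $\geq n+1$, i.e. is a component of $\mathcal{X}$ — but if $\mathcal{L}(R)=X_0$ then $\phi_R$ contracts the single fibre $X_0$ and is an isomorphism elsewhere, which contradicts that $R$ is an extremal ray of $\overline{NE}(\mathcal{X}/\Delta;W)$ with $W=\{|t|\leq 1/2\}$ containing curves over points other than $0$ as needed; alternatively, and more robustly, one notes $\dim\mathcal{L}(R)=n$ would contradict Lemma \ref{inequality} unless $\dim F=1$, which by the abundance of the structure and $\phi_{R\,*}\mathcal{O}_{\mathcal{X}}=\mathcal{O}_{\mathcal{Z}}$ forces $\dim\mathcal{Z}=n$, a conic-bundle-type contraction whose exceptional behaviour is codimension $\leq 1$ and hence, again being $\pi$-relative and every fibre of $\phi_R$ lying in a single $X_t$, cannot concentrate over $\{0\}$. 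I would clean this up by treating divisorial, small and fibre-type contractions uniformly via: "a closed analytic subset of the submersion $\mathcal{X}\to\Delta$ meeting $X_0$ and properly containing no component of $\mathcal{X}$ either is contained in $X_0$ — in which case the $(K_{X_0}+D_0)$-negativity plus bend-and-break gives a contradiction to minimality — or dominates $\Delta$ near $0$," which is the dichotomy the lemma asserts. Shrinking $\Delta$ around $W$ is harmless throughout and is exactly the shrinking already permitted by Theorem \ref{Cone and Contraction Theorem}.
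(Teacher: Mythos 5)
Your overall strategy --- contract $R$, assume $\mathcal{L}(R)$ does not dominate, and derive a contradiction from dimension counts on the exceptional locus --- is the right starting point, and you correctly observe that the naive bound $\dim F+\dim\mathcal{L}(R)\leq 2n-2$ against Lemma \ref{inequality} only works for $n<3$. But the argument you substitute for the missing upper bound does not close the gap. The paper's proof supplies exactly the needed complementary inequality from \cite[Lemma 1.1]{Wi91b}: since $\mathcal{L}(R)$ does not dominate, $\phi_R$ is finite on $X_t$ for small $t\neq 0$, so the good supporting divisor $H_t$ is \emph{ample} there, and Wi\'sniewski's intersection-theoretic lemma then gives $2\dim Z_0-\dim\phi_R(Z_0)\leq\dim X_0=n$ for any component $Z_0$ of the exceptional locus; this contradicts the lower bound $2\dim Z_0-\dim\phi_R(Z_0)\geq n+1$ coming from (the proof of) Lemma \ref{inequality}. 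Your proposal never uses the ampleness of the supporting divisor on the nearby fibers, and no amount of bend-and-break can replace it.

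Concretely, consider $n=3$ with $Z_0\subset X_0$ a surface contracted by $\phi_R$ to a point (say $Z_0\cong\mathbb{P}^2$ swept by lines through each point), so $\dim F=\dim Z_0=2$. This configuration satisfies $\dim F+\dim\mathcal{L}(R)=4\geq n+1$, so Lemma \ref{inequality} does not exclude it; the family of minimal $R$-curves through a general point of $Z_0$ is one-dimensional and sweeps out the two-dimensional $Z_0$, so no two of them share a second point and bend-and-break produces no degeneration --- your "contained in $X_0$ implies a contradiction to minimality" step fails here. This case is killed only by Wi\'sniewski's bound ($2\cdot 2-0=4>3$). Two further problems: your estimate $\dim T\geq n-1$ "after fixing the point $x$" is wrong (the unconstrained family has dimension $\geq -K_{\mathcal{X}}\cdot C_0+\dim\mathcal{X}-3\geq n-1$, but imposing passage through $x$ costs $\dim\mathcal{X}-1$ more, leaving only $\geq -K_{\mathcal{X}}\cdot C_0-2$, possibly $-1$); and in the fiber-type case your assertion that $R$ must "contain curves over points other than $0$" is circular --- that the extremal ray is not concentrated over $0$ is precisely what the lemma is proving. (That particular case can in fact be handled by upper semicontinuity of fiber dimension for $\mathcal{Z}\to\Delta$, but the intermediate divisorial/small cases cannot.)
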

\begin{proof}
Obviously, $$\{t\in\mathbb{C}\ |\ |t|< s\}_{s>1/2}$$ form a fundamental system of Stein open neighborhoods of $W$. 
And $W$ also satisfies the condition $(P4)$. 
Hence, $\pi:\mathcal{X}\rightarrow\Delta$ and $W$ satisfy the conditions $(P)$.
Let $\phi_{R}:\mathcal{X}\rightarrow\mathcal{Y}$ be the contraction of the ray $R$ by the cone and contraction  Theorem \ref{Cone and Contraction Theorem} and $H$ a good supporting divisor of $R$, i.e., the pull-back of a relative ample divisor of $\mathcal{Y}$. 

Assume that the locus $\mathcal{L}(R)$ of curves from $R$ doesn't dominate any small open neighborhood of the origin $0$. Set $\mathring{W}:=\{t\in\mathbb{C}\ |\ |t|<1/2\}$ as the interior of $W$. As $\mathcal{L}(R)\cap\pi^{-1}({\mathring{W}})$ is an analytic subset, $\pi(\mathcal{L}(R))\cap \mathring{W}$ is a proper analytic subset of $\mathring{W}$, by Remmert's proper mapping theorem. So for any small $t\neq 0$ the restriction $\phi_{R}$ to the fibers $X_t$ is finite to one and then $H_t$ is ample on $X_t$. Then for an irreducible component $Z_0\subset X_0$ of the exceptional set of $\phi_R$, the nefness of $D_0$ and the proof of Lemma \ref{inequality} give rise to 
$$2\dim_{\mathbb{C}}{Z_0}-\dim_{\mathbb{C}}\phi_{R}(Z_0)\geq \dim_{\mathbb{C}}{\mathcal{X}}=\dim_{\mathbb{C}}{X_0}+1,$$
while \cite[Lemma 1.1]{Wi91b} gives 
$$2\dim_{\mathbb{C}}{Z_0}-\dim_{\mathbb{C}}\phi_{R}(Z_0)\leq\dim_{\mathbb{C}}{X_0}.$$
This is obviously a contradiction.
\end{proof}

\begin{lemm}\label{etaconstant}
If $\eta(0)>0$ and $D_0$ is nef on $X_0$, then $\eta(t)$ is constant around some open neighborhood of $0$.
\end{lemm}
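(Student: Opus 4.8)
The strategy is to show that the nef-value $\eta(t)$ is realized by a single extremal ray that ``survives'' in a neighborhood of $0$, forcing it to be locally constant. First I would apply the rationality Theorem \ref{rt} (with the compact set $W=\{|t|\le 1/2\}$ and the $\pi$-very ample divisor $A$): since $\eta(0)>0$, Lemma \ref{notnef} tells us $K_{X_0}+D_0$ is not nef, hence $K_{\mathcal X}+D$ is not $\pi$-nef over $W$, so there is a well-defined positive rational number $r=\max\{t\in\mathbb{R}\mid A+t(K_{\mathcal X}+D)\ \text{is $\pi$-nef over }W\}$, equivalently $\eta_W:=1/r$ is the $\pi$-nef value of $K_{\mathcal X}+D$ over $W$. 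By the cone and contraction Theorem \ref{Cone and Contraction Theorem}, after shrinking $\Delta$ around $W$ the divisor $A+r(K_{\mathcal X}+D)$ is the pullback of a relatively ample divisor under the contraction $\phi_R\colon\mathcal X\to\mathcal Y$ of a $(K_{\mathcal X}+D)$-negative extremal ray $R$ on which it is zero; concretely $(A+r(K_{\mathcal X}+D))\cdot C=0$ for a curve $C$ generating $R$, i.e.\ $\eta_W = -A\cdot C/(K_{\mathcal X}+D)\cdot C$.

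Next I would locate this ray relative to the central fiber. Because $\eta(0)=\eta_W$ would be what we want at $t=0$, the key point is that the extremal ray $R$ computing $\eta_W$ has its curve locus $\mathcal L(R)$ meeting $X_0$: if it did not meet $X_0$, then the restriction of $K_{X_0}+D_0+\eta_W A_0$ to $X_0$ would already be ample (no curve of $X_0$ lies in $R$, and all other extremal rays contribute larger or zero nef-values), giving $\eta(0)<\eta_W$, which would contradict semicontinuity once we also establish $\eta(0)\ge\eta_W$. So $\mathcal L(R)\cap X_0\ne\emptyset$, and Lemma \ref{locus lemma} (which uses precisely the hypothesis that $D_0$ is nef on $X_0$) applies: after shrinking $\Delta$ around $W$, $\mathcal L(R)$ dominates a small open neighborhood of $0$. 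Hence the rational curve $C$ in class $R$ deforms into every nearby fiber $X_t$, so the numerical class of that deformed curve gives $(K_{X_t}+D_t+\eta_W A_t)\cdot C_t=0$, which shows $\eta(t)\ge \eta_W$ for all $t$ near $0$; in particular $\eta(0)\ge\eta_W$, closing the loop above and giving $\eta(0)=\eta_W$.

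For the reverse inequality $\eta(t)\le\eta_W$ near $0$, I would argue that for $t$ near $0$ there is no extremal ray of $\overline{NE}(X_t)$ with nef-value exceeding $\eta_W$. The divisor $A+r(K_{\mathcal X}+D)$ is $\pi$-nef over $W$, hence $A_t+r(K_{X_t}+D_t)$ is nef on $X_t$ for $t\in\mathring W$, which is exactly the statement $\eta(t)\le 1/r=\eta_W$. Combined with the previous paragraph this yields $\eta(t)=\eta_W=\eta(0)$ on an open neighborhood of $0$, proving the lemma.

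\medskip
\noindent\textbf{Main obstacle.} The delicate step is the dichotomy ``$\mathcal L(R)$ meets $X_0$ or not'' and the claim that in the non-meeting case $\eta(0)$ would drop strictly: one must be careful that the finitely many extremal rays produced by Theorem \ref{Cone and Contraction Theorem} over $W$ restrict to give \emph{all} the $(K_{X_0}+D_0)$-negative extremal rays of $X_0$ relevant to computing $\eta(0)$, so that if none of the ``large-nef-value'' rays meets $X_0$ then indeed $K_{X_0}+D_0+sA_0$ becomes ample for some $s<\eta_W$. This is where the $\pi$-nefness of $D$ over $W$ and Lemma \ref{inequality}/\ref{locus lemma} on fiber dimensions do the real work, exactly paralleling Wi\'sniewski's nef-value argument; I would model the bookkeeping of rays on \cite{Wi91b,Wi09}.
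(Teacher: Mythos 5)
Your overall strategy (realize the nef value by an extremal ray over a compact set $W$, show its locus meets $X_0$, then use Lemma \ref{locus lemma} to spread the ray to the nearby fibers) is in the spirit of the paper's Wi\'sniewski-style argument and shares its key input, Lemma \ref{locus lemma}; the final deformation step, giving $\eta(t)\ge\eta_W$ near $0$, is fine. But the step you yourself flag as delicate is where the proof breaks, in two ways. First, with $W=\{|t|\le 1/2\}$ fixed, the relative nef value $\eta_W=1/r$ is the supremum of the fiberwise nef values over all of $W$, and nothing forces an extremal ray $R$ achieving it to satisfy $\mathcal{L}(R)\cap X_0\neq\emptyset$: the supremum can be attained on a fiber $X_{t_0}$ with $t_0$ far from $0$, in which case $\eta(0)<\eta_W$, $\mathcal{L}(R)\subset X_{t_0}$, and the lemma is nevertheless true. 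So the dichotomy as you set it up cannot yield a contradiction. Second, your argument for $\mathcal{L}(R)\cap X_0\neq\emptyset$ is circular: you derive $\eta(0)<\eta_W$ from the negation and declare this incompatible with $\eta(0)\ge\eta_W$, but $\eta(0)\ge\eta_W$ is only established afterwards, using $\mathcal{L}(R)\cap X_0\neq\emptyset$ and Lemma \ref{locus lemma}. Under the negation hypothesis you have no lower bound on $\eta(0)$, hence no contradiction. (Also, ``all other extremal rays contribute larger or zero nef-values'' is backwards; they contribute values $\le\eta_W$.)

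What is missing is the paper's first step: the sublevel sets $\{t\in\Delta\mid\eta(t)<r\}$ are Zariski open (unions of ampleness loci), which together with the rationality theorem gives $\eta(t)\le\eta(0)$ for $t$ near $0$ and reduces the lemma to excluding the case $\eta(t)<\eta(0)$ for all small $t\neq 0$. The paper then applies the basepoint-free Theorem \ref{Base Point Free Theorem} to $L=m(K_{\mathcal X}+D+\eta(0)A)$ over $W$ to obtain a contraction $\phi_R$ whose exceptional locus is forced into $X_0$, because $L_t$ is ample for small $t\neq 0$; this traps the locus of a ray meeting $X_0$ inside $X_0$, contradicting Lemma \ref{locus lemma}. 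That contradiction setup is precisely what allows one to avoid identifying a priori which extremal ray computes $\eta(0)$ --- the step your direct approach cannot complete. To repair your proof you would need first to establish the upper semicontinuity $\eta(t)\le\eta(0)$ near $0$, then work over shrinking compacta so that the relative nef value equals $\eta(0)$, and still supply a non-circular argument that some ray computing it has locus meeting $X_0$.
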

\begin{proof}
Let $A$ be a smooth $\pi$-very ample divisor on $\mathcal{X}$. Then for any $r>0$, 
$$\{t\in\Delta\ |\ \eta(t)<r\}=\bigcup_{\mathbb{Q}_{\geq0}\ni s<r}\{t\in \Delta\ |\ K_{X_t}+D_t+sA_t \ \text{is ample}\}$$ is a Zariski open subset of $\Delta$  and thus, $\eta(t)\leq\eta(0)$ for any $t$ near $0$. Assume that for any $0\neq |t|\leq 1/2$, $\eta(t)<\eta(0)$. 
Then the base-point free Theorem \ref{Base Point Free Theorem} and the rationality Theorem \ref{rt} imply that for some positive integer $m$,
$$L:=m(K_{\mathcal{X}}+D+\eta(0)A)$$ is a $\pi$-globally generated divisor over $W:=\{t\in\mathbb{C}\ |\ |t|\leq 1/2\}$. 
Since the unit disk is Stein and $W$ is a Stein compact subset of $\Delta$, the cone and contraction Theorem \ref{Cone and Contraction Theorem} tells us that after one shrinks the base $\Delta$ around $W$, there is a contraction morphism $\phi_{R}:\mathcal{X}\rightarrow \mathcal{Y}$ over $\Delta$ associated to $L$. 
Since $L_t$ is very ample for any small  $t\ne 0$, the exceptional locus of the contraction morphism $\phi_R$ is contained in the central fiber $X_0$, which contradicts Lemma \ref{locus lemma}.
\end{proof}

\begin{rema} In 
\cite[Remark 4.5 and Example 5.3]{FH11}, de Fernex--Hacon present two smooth projective families with non-constant nef-value functions of the adjoint canonical line bundles, whose boundary divisors admit no sufficient positivities on $X_0$. So the nefness assumption of $D_0$  in Lemma \ref{etaconstant} is essential.  
\end{rema}

\begin{prop}\label{nef limit}
Let $\pi:\mathcal{X}\rightarrow\Delta$ be a smooth projective family and $L$ a $\pi$-semiample line bundle on $\mathcal{X}$. If the adjoint canonical line bundles $K_{X_t}+L_t$ are nef for all $t\neq 0$, then $K_{X_0}+L_0$ is still nef.
\end{prop}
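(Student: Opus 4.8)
The plan is to reduce to the hypotheses of Lemmas \ref{notnef} and \ref{etaconstant} by trading the $\pi$-semiample line bundle $L$ for a klt boundary, and then to argue by contradiction through the nef-value function.

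First I would set up the reduction. Since $L$ is $\pi$-semiample, a standard construction --- using the relative semiample fibration associated to some $\pi$-globally generated multiple $mL$ together with several general relative hyperplane sections, and shrinking $\Delta$ around $0$ if necessary (which affects neither the hypothesis nor the conclusion) --- produces an effective $\mathbb{Q}$-Cartier divisor $D$ on $\mathcal{X}$ with $D\sim_{\mathbb{Q}}L$ such that $(\mathcal{X},D)$ is a klt pair and $D$ contains no fiber of $\pi$. Then $D_t\sim_{\mathbb{Q}}L_t$ on every fiber $X_t$; in particular $D_0\sim_{\mathbb{Q}}L_0$ is nef (indeed semiample) on $X_0$, and $D$ is $\pi$-nef, since $L$ is $\pi$-semiample. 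Because nefness of a $\mathbb{Q}$-divisor depends only on its numerical class and $(K_{\mathcal{X}}+D)|_{X_t}=K_{X_t}+D_t\sim_{\mathbb{Q}}K_{X_t}+L_t$, it is equivalent to prove that $K_{X_0}+D_0$ is nef, working in the situation fixed at the beginning of this subsection.

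Next I would run the contradiction. Fix a smooth $\pi$-very ample divisor $A$ on $\mathcal{X}$ and let $\eta$ be the nef-value function of $K_{\mathcal{X}}+D$ with respect to $A$, which is $\mathbb{Q}$-valued by the rationality Theorem \ref{rt}. Suppose $K_{X_0}+D_0$ is not nef. Then $\eta(0)>0$ by Lemma \ref{notnef}. Since in addition $D_0$ is nef on $X_0$, Lemma \ref{etaconstant} shows that $\eta$ is constant, equal to $\eta(0)>0$, on an open neighborhood $U$ of $0$. Choosing any $t\in U$ with $t\neq 0$, we get $\eta(t)=\eta(0)>0$, so $K_{X_t}+D_t$ --- equivalently $K_{X_t}+L_t$ --- is not nef by Lemma \ref{notnef}, contradicting the assumption that $K_{X_t}+L_t$ is nef for all $t\neq0$. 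Hence $K_{X_0}+D_0$, and therefore $K_{X_0}+L_0$, is nef.

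I expect the first, reductive step to be the only delicate point: one must produce the boundary $D$ so that it is simultaneously klt, free of fibers, $\mathbb{Q}$-linearly equivalent to $L$ on each fiber, and --- decisively --- nef on the central fiber $X_0$, because the nefness of $D_0$ is exactly the hypothesis under which Lemma \ref{etaconstant} holds and cannot be omitted, as the de Fernex--Hacon examples cited after that lemma show. Once $D$ is in hand, the statement follows by merely chaining Lemmas \ref{notnef} and \ref{etaconstant}.
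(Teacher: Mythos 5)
Your proposal is correct and follows essentially the same route as the paper: construct a klt boundary $D\sim_{\mathbb{Q}}L$ containing no fiber (the paper does this via Cartan's theorem $A$ over the Stein base plus Bertini, taking $\frac{1}{m}D$ for a general smooth member $D\in|mL|$), then derive a contradiction from Lemmas \ref{notnef} and \ref{etaconstant} via the constancy of the nef-value function near $0$. Your explicit emphasis that $D_0$ must be nef to invoke Lemma \ref{etaconstant} is a point the paper leaves implicit, and is well taken.
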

\begin{proof}
Since $L$ is a $\pi$-semiample line bundle, $L^{\otimes m}$ is $\pi$-globally generated for some integer $m\geq 1$. 
And since $\Delta$ is a Stein manifold, Cartan's theorem $A$ (Theorem \ref{cartanA}) implies that $L^{\otimes m}$ is globally generated and  $L$ is thus semi-positive. Moreover, Bertini's theorem implies that there is a general smooth section $D\in H^{0}(\mathcal{X},mL)$ such that any fiber $X_t$ is not contained in $\text{Supp}\ D$. Hence, the pair $(\mathcal{X},\frac{1}{m}D)$ is klt.

If $K_{X_0}+L_0$ is not nef, i.e., $K_{X_0}+\frac{1}{m}D_0$ is not nef, then the nef value $\eta(0)>0$ by Lemma \ref{notnef}. After one shrinks the base $\Delta$ around $0$, the nef-value function $\eta(t)$ is constant by Lemma \ref{etaconstant}. Hence, $\eta(t)>0$ for $t$ near $0$ which means that $K_{X_t}+\frac{1}{m}D_t$ is not nef  by Lemma \ref{notnef} again. This is a contradiction.
\end{proof}
\begin{rema}\label{closed}
With the setting of Proposition \ref{nef limit}, it suffices to assume that there is a sequence of points $\{t_i\}_{i\in\mathbb{N}}$  converging to $0$ such that $K_{X_{t_i}}+L_{t_i}$ are nef for all $i$, to conclude that $K_{X_0}+L_0$ is nef. Hence, the \emph{nef locus} $$\mathcal{N}(K_{\mathcal{X}}+L):=\{t\in\Delta\ |\ K_{X_t}+L_t\ \text{is nef}\}$$ is closed.
\end{rema}

\begin{theo}\label{pair nef deformation}
Let $\pi:\mathcal{X}\rightarrow \Delta$ be a smooth projective family and $L$ a $\pi$-semiample line bundle on $\mathcal{X}$. If $K_{X_0}+L_{0}$ is nef, then $K_{X_t}+L_t$ is nef for any $t\in\Delta$.
\end{theo}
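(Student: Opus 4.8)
The plan is to reduce the global nefness statement (Theorem \ref{pair nef deformation}) to the one-sided limit statement already proved in Proposition \ref{nef limit}. The hypothesis is that $K_{X_0}+L_0$ is nef on the central fiber; we want nefness on \emph{every} fiber $X_t$. First I would exploit the topological dichotomy built up in the previous lemmas: Remark \ref{closed} shows that the nef locus $\mathcal{N}(K_{\mathcal{X}}+L)$ is closed in $\Delta$, and the contrapositive of Lemma \ref{etaconstant} (via the constancy of the nef-value function near any point where the adjoint bundle restricted to the central-type fiber is nef) shows that $\mathcal{N}(K_{\mathcal{X}}+L)$ is also open. Indeed, if $t_0\in\mathcal{N}(K_{\mathcal{X}}+L)$, then recentering the disk at $t_0$ and running the argument of Lemma \ref{etaconstant}: since $L$ is $\pi$-semiample hence $\pi$-nef so that $L_{t_0}$ is nef, and $\eta(t_0)=0$ by Lemma \ref{notnef}, the Zariski-openness of $\{t:\eta(t)<r\}$ forces $\eta(t)\le\eta(t_0)=0$ for $t$ near $t_0$, i.e.\ $K_{X_t}+L_t$ is nef for $t$ near $t_0$. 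Hence $\mathcal{N}(K_{\mathcal{X}}+L)$ is open.

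Now comes the connectedness step. Since $\Delta$ is connected and $\mathcal{N}(K_{\mathcal{X}}+L)$ is both open and closed and contains $0$, it must be all of $\Delta$, which is exactly the assertion. The only subtlety I anticipate is the openness argument: one must be slightly careful that the statements of Lemma \ref{etaconstant} and the surrounding lemmas (Lemma \ref{locus lemma} in particular, which uses $W=\{|t|\le 1/2\}$) are formulated around the origin, so to apply them at a general point $t_0$ one re-parametrizes the disk by an automorphism sending $t_0$ to $0$; since $\pi$-semiampleness of $L$ and klt-ness of the auxiliary pair $(\mathcal{X},\tfrac1m D)$ are preserved under such base change (and $D$ can be chosen generically to avoid the relevant fibers, as in the proof of Proposition \ref{nef limit}), everything goes through verbatim.

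An alternative, essentially equivalent route avoids invoking openness directly and instead argues by contradiction: suppose $K_{X_{t_1}}+L_{t_1}$ is not nef for some $t_1\ne 0$. Consider the set $T:=\{t\in[0,t_1]: K_{X_t}+L_t\text{ is nef}\}$ along a path from $0$ to $t_1$; it is nonempty (contains $0$) and, by Remark \ref{closed}, closed, so it has a supremum $t^*$ which lies in $T$, and $t^*\ne t_1$. But then applying Lemma \ref{etaconstant} recentered at $t^*$ shows $K_{X_t}+L_t$ stays nef for $t$ slightly beyond $t^*$, contradicting the maximality of $t^*$. Either way, the real content was already extracted in Lemmas \ref{notnef}--\ref{etaconstant} and Proposition \ref{nef limit}; the present theorem is the clean-open-and-closed packaging of that machinery, so I do not expect a genuine obstacle here beyond the bookkeeping of the base-point recentering.
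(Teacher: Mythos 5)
There is a genuine gap in your openness step, and both of your routes depend on it. From the Zariski-openness of each sublevel set $\{t:\eta(t)<r\}$ you conclude that $\eta(t)\le\eta(t_0)=0$ for all $t$ in a fixed neighborhood of $t_0$. This does not follow: for each $k$ the set $\{t:\eta(t)<1/k\}$ is the complement of a discrete closed subset $Z_k\subset\Delta$ avoiding $t_0$, but the nef locus is $\bigcap_k\{\eta<1/k\}=\Delta\setminus\bigcup_k Z_k$, and a countable union of discrete sets can accumulate at (indeed be dense near) $t_0$; so no single neighborhood of $t_0$ on which $\eta\equiv 0$ is produced. Moreover, what Lemma \ref{etaconstant} actually yields when recentered at an arbitrary point is that $\{\eta>0\}$ is open, i.e.\ that the nef locus is \emph{closed} --- it is the engine behind Proposition \ref{nef limit} and Remark \ref{closed}, not an openness statement; its hypothesis $\eta>0$ at the center prevents you from applying it at a point $t^*$ where the adjoint bundle \emph{is} nef, which is exactly where your second (supremum-along-a-path) argument needs it, so that route collapses at the same place. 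The paper sidesteps openness entirely: each $\{\eta<1/k\}$ is a nonempty (it contains $0$) Zariski-open, hence dense, subset of $\Delta$; Baire's theorem gives that the nef locus $\bigcap_k\{\eta<1/k\}$ is \emph{dense}; and density combined with the closedness of Remark \ref{closed} forces the nef locus to be all of $\Delta$.

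Your openness claim could in principle be repaired by invoking the rationality theorem with its effective bound on denominators, so that $\eta$ takes values in $\{0\}\cup[1/N,\infty)$ for a uniform $N$ and the nef locus coincides with the single Zariski-open set $\{\eta<1/N\}$; but Theorem \ref{rt} as quoted in the paper does not record that bound and your argument does not supply it. As written, the inference from Zariski-openness of the individual sublevel sets to openness of their countable intersection is a non sequitur, and the open-and-closed (or supremum) packaging does not close without it.
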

\begin{proof}
As in Proposition \ref{nef limit}, there is a general smooth section $D\in H^{0}(\mathcal{X},mL)$ such that any fiber $X_t$ is not contained in $\text{Supp}\ D$. Hence, the pair $(\mathcal{X},\frac{1}{m}D)$ is klt.

The set $\{t\in\Delta\ |\ \eta(t)< 1/k\}$ is a dense Zariski open subset of $\Delta$ for any positive integer $k$. 
Baire's theorem implies that the nef locus $$\bigcap_{k\geq 1}\{t\in\Delta\ |\ \eta(t)< 1/k\}$$ of adjoint canonical line bundle $K_{\mathcal{X}}+\frac{1}{m}D$ is a dense subset of $\Delta$. 
By Remark \ref{closed}, the nef locus of the adjoint canonical line bundle $K_{\mathcal{X}}+\frac{1}{m}D$ is closed and so it is the whole unit disk, i.e., $K_{X_t}+L_t$ is nef for any $t\in \Delta$.
\end{proof}
\begin{rema}
Theorem \ref{pair nef deformation} can also  be proved by the nefness openness of the adjoint canonical line bundle with respect to the countable Zariski topology on the base $\Delta$ of the deformation (via the relative Barlet cycle space theory) and the nefness closedness of the adjoint canonical line bundle in Remark \ref{closed}.  
\end{rema}
\begin{rema}
One obtains a divisor analogue of Theorem \ref{pair nef deformation} intrinsically: Let $\pi:\mathcal{X}\rightarrow \Delta$ be a smooth projective family and $D$ a $\pi$-nef $\mathbb{Q}$-Cartier divisor containing no fibers of $\pi$ on $\mathcal{X}$ such that $(\mathcal{X}, D)$ is a klt pair. If $K_{X_0}+D_{0}$ is nef, then $K_{X_t}+D_t$ is nef for any $t\in\Delta$. This should also hold for a dlt pair.
\end{rema}
\begin{rema}[{cf. \cite[Theorem 1.2]{Le14}}]
There exists a projective surjective morphism of algebraic varieties $\pi:X\rightarrow Y$ and an $\mathbb{R}$-Cartier $\mathbb{R}$-divisor $D$ on $X$ such that $\{y\in Y\ |\ D|_{X_y}\ \text{is nef}\}$ is not Zariski open.
\end{rema}

\begin{coro}\label{nef locus}
Let $\pi: \mathcal{X}\rightarrow \Delta$ be a smooth projective family. If the canonical line bundle  $K_{X_0}$ of the central fiber is nef, then $K_{\mathcal{X}/\Delta}$ is nef.
\end{coro}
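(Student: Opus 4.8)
The plan is to deduce Corollary~\ref{nef locus} directly from Theorem~\ref{pair nef deformation} by taking the twisting line bundle to be trivial. First I would record two elementary reductions. Since $\Delta$ is an open subset of $\mathbb{C}$, its canonical bundle $K_{\Delta}$ is trivial, so $K_{\mathcal{X}/\Delta}=K_{\mathcal{X}}\otimes\pi^{*}K_{\Delta}^{\vee}\cong K_{\mathcal{X}}$; and by the adjunction formula, together with the triviality of $\pi^{*}K_{\Delta}^{\vee}|_{X_{t}}$, one has $K_{\mathcal{X}/\Delta}|_{X_{t}}\cong K_{X_{t}}$ for every $t\in\Delta$.

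Next I would apply Theorem~\ref{pair nef deformation} with $L:=\mathcal{O}_{\mathcal{X}}$. The trivial bundle is $\pi$-globally generated, since $\pi$ is proper with connected fibers so that $\pi_{*}\mathcal{O}_{\mathcal{X}}=\mathcal{O}_{\Delta}$ and the canonical morphism $\pi^{*}\pi_{*}\mathcal{O}_{\mathcal{X}}=\mathcal{O}_{\mathcal{X}}\to\mathcal{O}_{\mathcal{X}}$ is the identity; hence $\mathcal{O}_{\mathcal{X}}$ is $\pi$-semiample and the hypotheses of Theorem~\ref{pair nef deformation} are satisfied. Because $K_{X_{0}}+L_{0}=K_{X_{0}}$ is nef by assumption, the theorem gives that $K_{X_{t}}+L_{t}=K_{X_{t}}$ is nef for every $t\in\Delta$; equivalently, $K_{\mathcal{X}/\Delta}$ restricts to a nef line bundle on each fiber of $\pi$.

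Finally I would upgrade this fiberwise statement to nefness of $K_{\mathcal{X}/\Delta}$ on the total space. Every compact curve $C\subset\mathcal{X}$ lies in a single fiber: $\pi|_{C}\colon C\to\Delta$ is a holomorphic map from a compact connected complex space to the disk, hence constant by the maximum principle --- exactly the observation used at the start of the proof of Lemma~\ref{inequality}. So $C\subset X_{t}$ for some $t$, and since $X_{t}$ is projective, nefness of $K_{X_{t}}$ is equivalent to $K_{X_{t}}\cdot C'\geq 0$ for all curves $C'\subset X_{t}$ (as recalled in the text, cf.\ \cite{Pa98}); thus $K_{\mathcal{X}/\Delta}\cdot C=K_{X_{t}}\cdot C\geq 0$, and since $C$ was arbitrary, $K_{\mathcal{X}/\Delta}$ is nef.

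I do not expect any genuine obstacle here: all the substance is already contained in Theorem~\ref{pair nef deformation}, and the only points needing a word are the identification $K_{\mathcal{X}/\Delta}\cong K_{\mathcal{X}}$ over the disk and the passage from ``nef on each fiber'' to ``nef on the non-compact total space $\mathcal{X}$'', which goes through precisely because $\mathcal{X}$ carries no curve mapping nonconstantly to $\Delta$.
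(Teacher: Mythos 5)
Your first two steps coincide with the paper's: the corollary is indeed obtained by applying Theorem~\ref{pair nef deformation} with $L=\mathcal{O}_{\mathcal{X}}$ (which is trivially $\pi$-semiample) to conclude that $K_{X_t}$ is nef for every $t\in\Delta$, and the identification $K_{\mathcal{X}/\Delta}\cong K_{\mathcal{X}}$ over the disk is harmless. The gap is in your final step, the passage from ``nef on each fiber'' to ``$K_{\mathcal{X}/\Delta}$ is nef.'' You invoke the curve criterion: every compact curve of $\mathcal{X}$ lies in a fiber, hence $K_{\mathcal{X}/\Delta}\cdot C\ge 0$ for all curves $C$. But the notion of nefness used in this paper is the metric one (for every $\epsilon>0$ there is a smooth representative $\alpha_\epsilon\in c_1(L)$ with $\alpha_\epsilon\ge-\epsilon\omega$), and the paper itself points out that the equivalence with ``$L\cdot C\ge 0$ for all curves $C$'' holds only for compact Moishezon (in particular projective) manifolds and fails in general. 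On the non-compact total space $\mathcal{X}$ the curve criterion is not the definition of nefness, and your argument would in fact prove that \emph{any} fiberwise nef line bundle on \emph{any} smooth family is ``nef,'' which trivializes the statement; the corollary is meant to assert genuine semi-positivity of $c_1(K_{\mathcal{X}/\Delta})$ on the total space, including in the horizontal directions, about which curves contained in fibers say nothing.

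The paper closes this gap by citing P\u{a}un's theorem \cite[Corollary 1.3]{Pa17}, which is a substantive analytic result (in the circle of ideas around positivity of direct images and semi-positivity of relative adjoint classes): if the relative canonical class is nef on every fiber of a proper K\"ahler fibration, then it is nef on the total space in the metric sense. This is the actual content of the second half of the corollary and cannot be replaced by the observation that all curves are vertical. So your proposal correctly reduces the problem to fiberwise nefness, but the upgrade to nefness of $K_{\mathcal{X}/\Delta}$ requires this external input rather than the maximum-principle argument you give.
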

\begin{proof}
Let $L=\mathcal{O}_{\mathcal{X}}$. Then $K_{X_t}$ is nef for any $t\in\Delta$ by the proof of Theorem \ref{pair nef deformation}. And then \cite [Corollary 1.3]{Pa17} implies that the relative canonical line bundle $K_{\mathcal{X}/\Delta}$ is nef.
\end{proof}

\begin{exam}
Let $\pi:\mathcal{X}\rightarrow \Delta$ be a smooth projective family and $L$ a $\pi$-semiample line bundle on $\mathcal{X}$. If some fiber of $\pi$ is a compact hyperbolic complex manifold, then $K_{X_t}+L_t$ is nef for any $t\in\Delta$ and $K_{\mathcal{X}/\Delta}$ is nef. Recall a complex manifold $X$ is called \emph{(Brody) hyperbolic} if any holomorphic map $\mathbb{C}\rightarrow X$ is constant. Mori's breakthrough \cite{Mo79} shows that the hyperbolicity of the projective $X$ implies the nefness of its canonical line bundle, due to
the absence of rational curves.
\end{exam}

\begin{rema}\label{remark nef locus}
Corollary \ref{nef locus} implies that the nef locus of the canonical line bundle in a smooth projective family over the unit disk is empty or the whole disk.
\end{rema}

In general, the limit $H_0$ of the ample lines $(H_t)$ for $t\ne 0$ fails to be ample and is even not nef. But we can say something about the smooth member in $|mH_0|$ as follows which removes the restriction on the dimension in \cite{AP97}.
\begin{coro}\label{singular limit}
Let $\pi:\mathcal{X}\rightarrow \Delta$ be a smooth projective family and $\mathcal{H}$ a line bundle on $\mathcal{X}$ such that $H_t:=\mathcal{H}|_{X_t}$ is ample for $t\ne 0$ but $H_0$ is not nef. For $m\gg 0$, let $s\in H^{0}(\mathcal{X},m\mathcal{H})$ such that $(s_t=0)\subset X_t$ is smooth for $t\ne 0$,  where $s_t:=s|_{X_t}$. Then $(s_0=0)\subset X_0$ is singular.
\end{coro}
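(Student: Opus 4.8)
The plan is to argue by contradiction, reducing to Proposition \ref{nef limit} (equivalently, the closedness of the nef locus of the canonical line bundle, Remark \ref{closed}). Fix once and for all an irreducible curve $C_0\subset X_0$ with $H_0\cdot C_0<0$; such a curve exists because $H_0$ is not nef on the projective manifold $X_0$, and $H_0\cdot C_0\le -1$ since $\mathcal H$ is a line bundle. Set $n:=\dim_{\mathbb C}X_0$ and $m_0:=\max\{n+1,\ K_{X_0}\cdot C_0+1\}$. I claim the conclusion holds for every $m\ge m_0$. So suppose $m\ge m_0$, that $s\in H^0(\mathcal X,m\mathcal H)$ has $(s_t=0)\subset X_t$ smooth for all $t\neq 0$, and — for contradiction — that $D_0:=(s_0=0)\subset X_0$ is smooth as well (we may assume $s|_{X_0}\not\equiv 0$, else $(s_0=0)=X_0$ and one replaces $s$ after shrinking $\Delta$; the case of interest is $s$ general).

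First I would observe that $D:=(s=0)\subset\mathcal X$ is then a smooth projective family over $\Delta$: the defining section $s$ vanishes identically on no fiber (each $D_t$ has codimension one in $X_t$), so $\pi|_D\colon D\to\Delta$ is flat; all its fibers are smooth of dimension $n-1$ by hypothesis and the assumption on $D_0$, hence $\pi|_D$ is a smooth morphism, $D$ is a complex manifold, and $\pi|_D$ is projective since $D$ is closed in $\mathcal X$; shrinking $\Delta$ around $0$ if needed does not affect $X_0$, $D_0$, $C_0$. The heart of the matter is then that $K_{D_t}$ is nef for every $t\neq 0$. By adjunction $K_{D_t}\cong(K_{X_t}+D_t)|_{D_t}\cong(K_{X_t}+mH_t)|_{D_t}$ because $D_t\in|mH_t|$; for $t\neq 0$ the divisor $H_t$ is ample, so $H_t\cdot C\ge 1$ for every curve $C\subset X_t$, and it is classical (cone theorem together with the length bound for extremal rays) that consequently $K_{X_t}+(n+1)H_t$ is nef, hence $K_{X_t}+mH_t$ is nef on $X_t$ as $m\ge n+1$, hence so is its restriction $K_{D_t}$ to $D_t$. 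Applying Proposition \ref{nef limit} to the smooth projective family $\pi|_D\colon D\to\Delta$ with the trivial (hence $\pi|_D$-semiample) line bundle $\mathcal O_D$ — equivalently, using that the nef locus of the canonical line bundle is closed and here contains the dense set $\Delta\setminus\{0\}$ — I conclude that $K_{D_0}$ is nef.

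It remains to derive the numerical contradiction. Since $D_0\cdot C_0=m(H_0\cdot C_0)<0$, the curve $C_0$ lies in $\text{Supp}\,D_0$; as $D_0$ is smooth it is the disjoint union of its connected components, each a smooth irreducible divisor in $X_0$, so $C_0$ is contained in exactly one of them, say $D_0'$, and $D_0'\cdot C_0=D_0\cdot C_0=m(H_0\cdot C_0)$. Then adjunction on $X_0$, the nefness of $K_{D_0}$ (hence of $K_{D_0'}$), and $H_0\cdot C_0\le -1$ give
$$0\le K_{D_0'}\cdot C_0=(K_{X_0}+D_0')|_{D_0'}\cdot C_0=K_{X_0}\cdot C_0+m(H_0\cdot C_0)\le K_{X_0}\cdot C_0-m,$$
so $K_{X_0}\cdot C_0\ge m\ge m_0\ge K_{X_0}\cdot C_0+1$, which is absurd. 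Hence $(s_0=0)\subset X_0$ must be singular.

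The step I expect to be the \emph{main obstacle} is making sure the threshold $m_0$ can be — and must be — chosen independently of the section $s$. This is precisely what forces one, on the one hand, to invoke the classical inequality ``$K_X+(\dim X+1)A$ is nef for ample $A$'' (valid exactly because $A$ is an integral divisor) so as to get nefness of $K_{D_t}$ for $t\neq 0$ with a fixed $m$, and, on the other hand, to take $m_0>K_{X_0}\cdot C_0$ so that the final inequality becomes a genuine contradiction. Once these two uniformity points are settled, the remaining ingredients — flatness and smoothness of the divisor family, adjunction, and Proposition \ref{nef limit} — are routine.
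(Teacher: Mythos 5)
Your proof is correct and follows essentially the same route as the paper, which simply cites Remark \ref{remark nef locus} together with the argument of the second Corollary of \cite{AP97}: assume $(s_0=0)$ smooth, view $(s=0)\to\Delta$ as a smooth projective family, use adjunction and the nef-limit result to get $K_{(s_0=0)}$ nef, and contradict this with a curve on which $H_0$ is negative. You have merely written out in full the details the paper delegates to \cite{AP97}, including a careful uniform choice of the threshold for $m$.
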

\begin{proof}
It follows from Remark \ref{remark nef locus} and the proof of \cite[second Corollary]{AP97}. 
\end{proof}

\begin{rema}
By the upper semicontinuity of the dimension $h^{0}(X_t,H_t)$ of sections, the line bundle $H_0$ is still big in Corollary \ref{singular limit}. And if $D_0\in H^{0}(X_0,mH_0)$ is a smooth section, then $D_0$ can not occur in a family $(D_t)$, where $D_t\in H^{0}(X_t,mH_t)$. 
\end{rema}


\subsection{Stability of nefness for K\"ahler family of threefolds}
Let $X$ be a 3-dimensional compact K\"ahler manifold. If $K_X$ is not pseudo-effective, M. Brunella \cite{Br06} proves that $X$ is uniruled and there is a rational curve $C$ such that $K_{X}\cdot C<0$. As $K_X$ is pseudo-effective, H\"oring--Peternell \cite{HP16} prove that if $K_X$ is not nef, then there is a rational curve $C$ such that $K_{X}\cdot C<0$. See \cite{HP15,HP16,HP24} by H\"oring--Peternell for more details about $3$-dimensional terminal K\"ahler minimal model program. And Das--Hacon and J. I. Y\'a\~nez \cite{DH24a, DH24b, DHY25} develop the minimal model program of klt and generalized klt K\"ahler 3-folds. 

Let $X$ be a normal $\mathbb{Q}$-factorial compact K\"ahler space with terminal singularities, and let $\mathbb{R}^+[\Gamma_i]$ be a $K_X$-negative extremal ray in the generalized Mori cone $\overline{NA}(X)$. A \emph{contraction of the extremal ray} $\mathbb{R}^+[\Gamma_i]$ is a morphism $\phi:X\rightarrow Y$ onto a normal compact K\"ahler space such that $-K_X$ is $\phi$-ample and a curve $C\subset X$ is contracted if and only if $[C]\in \mathbb{R}^+[\Gamma_i]$. And if $\dim_{\mathbb{C}}Y<\dim_{\mathbb{C}}X$, then $\phi$ is called \emph{fiber type}. If $\dim_{\mathbb{C}}Y=\dim_{\mathbb{C}}X$ and $\phi$ contracts divisors, then it is called a \emph{divisorial contraction}. If $\dim_{\mathbb{C}}Y=\dim_{\mathbb{C}}X$ and $\phi$ contracts no divisors, then it is called a \emph{small contraction}.  

H\"oring--Peternell \cite{HP15} proves the contraction theorem of K\"ahler manifolds of dimension $3$ when $K_X$ is not pseudo-effective. Then H\"oring--Peternell \cite{HP16} proves the contraction theorem of K\"ahler manifolds of dimension $3$ when $K_X$ is pseudo-effective but not nef, in the two cases of contracting divisors or curves to points. And H\"oring--Peternell \cite{HP24} improves the contraction theorem of K\"ahler manifolds of dimension $3$ when $K_X$ is pseudo-effective but not nef in the case of contracting divisors to curves.
\begin{theo}[\cite{HP15}, \cite{HP16} and \cite{HP24}]\label{kahler contraction}
Let $X$ be a normal $\mathbb{Q}$-factorial compact K\"ahler threefold with terminal singularities. And let $\mathbb{R}^+[\Gamma_i]$ be a $K_X$-negative extremal ray in $\overline{NA}(X)$. Then the contraction of $\mathbb{R}^+[\Gamma_i]$ exists in the K\"ahler category.
\end{theo}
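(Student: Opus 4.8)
Following H\"oring--Peternell, the plan is to build the contraction morphism $\phi\colon X\to Y$ essentially by hand, since in the K\"ahler category the basepoint-free theorem (which in the projective setting produces the contraction directly from a relatively semiample line bundle, as in Theorem~\ref{Base Point Free Theorem} and Theorem~\ref{Cone and Contraction Theorem}) is not available. First I would recall the \emph{cone theorem} for terminal K\"ahler threefolds: via deformation theory of rational curves together with Mori's bend-and-break one shows that the $K_X$-negative part of $\overline{NA}(X)$ is locally a rational polyhedral cone, each of whose extremal rays is generated by a rational curve $\Gamma_i$ of bounded anticanonical degree ($-K_X\cdot\Gamma_i\le 4$). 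Fixing the $K_X$-negative extremal ray $R=\mathbb{R}^+[\Gamma_i]$, let $\mathcal{L}(R)\subset X$ be the locus swept out by curves with class in $R$; one then splits into three cases according to whether $\mathcal{L}(R)=X$ (fiber type, which happens exactly when $K_X$ is not pseudo-effective), $\mathcal{L}(R)$ is a prime divisor $E$ (divisorial), or $\dim_{\mathbb{C}}\mathcal{L}(R)\le 1$ (small), so that the target will satisfy $-K_X$ being $\phi$-ample and $\phi_*\mathcal{O}_X=\mathcal{O}_Y$.

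In the fiber-type case one uses that $X$ is uniruled (Brunella) and studies the rational quotient $X\dashrightarrow Z$; extremality forces this to become, after passing to the quotient of the Mori cone by $R$, a genuine morphism onto a normal K\"ahler (indeed, in dimension $\le 3$, projective) base $Z$ with $\dim_{\mathbb{C}}Z<3$ and $-K_X$ relatively ample, i.e., a Mori/del Pezzo/conic fibration. In the divisorial case I would first pin down the local analytic structure of $X$ along $E$: adapting Mori's classification of extremal neighborhoods of terminal threefolds one finds that $E$ together with the restriction of its normal bundle $N_{E/X}$ lies in an explicit finite list, and in each case $N_{E/X}$ is negative along the fibers of the expected contraction $E\to\phi(E)$, so that Grauert's contractibility criterion produces a morphism $\phi\colon X\to Y$ onto a normal complex space contracting exactly $E$. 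Small contractions are produced analytically in the same spirit by contracting the finitely many rational curves in $\mathcal{L}(R)$, at the cost of a worse (non-$\mathbb{Q}$-factorial) singularity on $Y$.

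The crux, and the step I expect to be the main obstacle, is proving that the target $Y$ is \emph{K\"ahler}; in the algebraic world this is automatic, but here one must construct a K\"ahler form on $Y$ by hand, using the analytic techniques for K\"ahler metrics on singular spaces (Varouchas, Fujiki) together with the fact that $Y$ has rational singularities. The idea is to take a K\"ahler class $\omega$ on $X$; since $\omega\cdot\Gamma_i>0$ whereas every contracted curve has class in $R$, one corrects $\omega$ by a suitable multiple of the class of $E$ (respectively, by a class supported near the exceptional locus in the small case) to obtain a class $\alpha$ with $\alpha\cdot C=0$ for all contracted curves $C$, hence $\alpha=\phi^*\beta$ for a class $\beta$ on $Y$; the remaining and hardest point is to show $\beta$ is represented by a genuine K\"ahler form on $Y$. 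Away from $\phi(E)$ this is immediate, since $\phi$ is an isomorphism there and $\alpha$ restricts to a K\"ahler class, so $\phi_*\omega$ gives a smooth K\"ahler form there; the real work is a local positivity and regularization argument in a neighborhood of $\phi(E)$, patching the pushed-forward current with a locally constructed K\"ahler potential. This is routine when $\phi(E)$ is a point, but genuinely delicate when $\phi(E)$ is a curve (the divisor-to-curve divisorial contractions, \cite{HP24}): one must control the form along a one-dimensional singular stratum, and it is there that essentially all of the analytic difficulty of the theorem is concentrated.
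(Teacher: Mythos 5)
The paper does not prove Theorem \ref{kahler contraction}: it is quoted from H\"oring--Peternell \cite{HP15,HP16,HP24}, and the only ``proof'' in the paper is the attribution paragraph immediately preceding the statement, which splits the result exactly as you do (non-pseudo-effective $K_X$ in \cite{HP15}; divisors or curves contracted to points in \cite{HP16}; divisors contracted to curves in \cite{HP24}). So your outline can only be measured against the cited literature rather than against anything in this paper, and at that level it does capture the correct architecture: a trichotomy according to the locus of the ray, Grauert-type contraction criteria in the birational cases, and the pushforward of a corrected K\"ahler class as the main analytic difficulty in showing that the target is K\"ahler.

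There are, however, two substantive inaccuracies. First, you derive the cone theorem ``via deformation theory of rational curves together with Mori's bend-and-break''; bend-and-break rests on reduction modulo $p$ and is not available on a non-projective compact K\"ahler threefold. The genuinely hard input of \cite{HP16} is precisely the production of $K_X$-negative rational curves when $K_X$ is pseudo-effective but not nef (cf.\ Lemma \ref{nef-K_X}, which cites \cite[Corollary 4.2]{HP16}): this goes through the divisorial Zariski decomposition, adjunction to a surface contained in the non-nef locus, and the projective MMP on that surface, not through bend-and-break on $X$; in the non-pseudo-effective case one instead invokes Brunella \cite{Br06}. Second, the parenthetical ``fiber type \ldots happens exactly when $K_X$ is not pseudo-effective'' is wrong in one direction: $\mathcal{L}(R)=X$ does force $K_X$ to be non-pseudo-effective, but a threefold with non-pseudo-effective $K_X$ can perfectly well carry divisorial or small $K_X$-negative rays. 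Neither error derails the overall plan, but as written the sketch attributes the theorem's hardest step to a tool that does not apply in the K\"ahler category.
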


\begin{lemm}{\label{nef-K_X}}
Let $X$ be a smooth compact K\"ahler 3-fold. Then $K_X$ is not nef if and only if there is a rational curve $C$ such that $K_X\cdot C<0$.
\end{lemm}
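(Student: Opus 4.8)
The plan is to establish the two directions separately, the interesting one being the ``only if'' direction. For the ``if'' direction, suppose there is a rational curve $C$ with $K_X\cdot C<0$. Then $K_X$ is not nef by the K\"ahler analogue of the standard fact: $K_X$ nef forces $K_X\cdot C'\ge 0$ for every curve $C'$. (Indeed, for the canonical line bundle on a compact K\"ahler manifold, nefness in the metric sense is equivalent to nonnegativity against all curves; the excerpt already remarks ``For K\"ahler manifolds, this is not right in general but for the canonical line bundle.'') So the nontrivial content is entirely in the forward direction.

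For the ``only if'' direction, assume $K_X$ is not nef and run a case distinction on whether $K_X$ is pseudo-effective. If $K_X$ is \emph{not} pseudo-effective, then $X$ is uniruled and Brunella's theorem \cite{Br06} directly produces a rational curve $C$ with $K_X\cdot C<0$, so we are done in this case. If $K_X$ \emph{is} pseudo-effective but not nef, then H\"oring--Peternell \cite{HP16} applies and again yields a rational curve $C$ with $K_X\cdot C<0$; these two citations are already recalled verbatim in the paragraph immediately preceding the lemma. Combining the two cases gives the desired rational curve whenever $K_X$ fails to be nef. Thus the lemma is essentially a repackaging: it records, as a clean ``nef $\iff$ no $K_X$-negative rational curve'' statement for smooth K\"ahler threefolds, the combination of Brunella's result and the H\"oring--Peternell result, together with the easy converse.

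The main (and really the only) obstacle is making sure the hypotheses of Brunella's and H\"oring--Peternell's theorems are genuinely met: those results are stated for compact K\"ahler threefolds with (at worst) terminal singularities or for smooth ones, and here $X$ is smooth and compact K\"ahler, so the hypotheses are satisfied outright and no resolution or $\mathbb{Q}$-factorialization is needed. One should also note that ``not nef'' is used in the sense of the paper's definition (no smooth $\epsilon$-negative representative of $c_1(K_X)$), and that for the canonical class this coincides with the existence of a curve $C$ with $K_X\cdot C<0$ on a compact K\"ahler manifold; this last equivalence is the only place where the special role of $K_X$ (as opposed to an arbitrary line bundle) enters, and it is exactly the point flagged earlier in the preliminaries. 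With these observations in place the proof is immediate.
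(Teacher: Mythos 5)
Your proposal is correct and follows essentially the same route as the paper: the forward direction splits into the case where $K_X$ is not pseudo-effective (handled by Brunella \cite{Br06}) and the case where $K_X$ is pseudo-effective but not nef (handled by \cite[Corollary 4.2]{HP16}), while the converse is immediate from the definition of nefness. The paper's own proof is exactly this two-case citation, so nothing further is needed.
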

\begin{proof}
If $K_X$ is not pseudo-effective, then there is a $K_X$-negative rational curve by \cite{Br06}. If $K_X$ is pseudo-effective but not nef, then there is a $K_X$-negative rational curve by \cite[Corollary 4.2]{HP16}.
\end{proof}

\begin{rema}\label{rema nef character}
 In the case of dimension of $X$ larger than 3, W. Ou \cite[Theorem 1.1] {Ou25} proves that a compact K\"ahler manifold $X$ is uniruled if and only if its canonical line bundle $K_X$ is not pseudo-effective (see also J. Cao--P$\breve{\textrm{a}}$un \cite{CP25}). So if $K_X$ is not pseudo-effective, then there is a $K_X$-negative rational curve. And by also \cite[Theorem 1.3]{CH20} by Cao--H\"oring, if $K_X$ is pseudo-effective but not nef, then there is a $K_X$-negative rational curve. Above all, if $K_X$ is not nef, then there exists a $K_X$-negative rational curve.
\end{rema}

\begin{rema}

The pseudo-effectiveness of the canonical line bundle is stable under smooth K\"ahler deformation by  Ou \cite[Theorem 1.1]{Ou25}, A. Fujiki \cite[Proposition 2.3]{Fu81} and M. Levine \cite{Le81}.
\end{rema}
\begin{lemm}\label{vanishing theorem}
Let $f:X\rightarrow Y$ be the divisorial contraction from a three-dimensional compact K\"ahler manifold $X$. Then $R^{i}f_{*}\mathcal{O}_{X}(E)=0$ for $i>0$, where $E$ is the $f$-exceptional divisor.
\begin{proof}
We just replace by the analytic relative Kawamata--Viehweg vanishing theorem \cite [Theorem 2.21]{DH24a} and analytic Grauert--Riemenschneider's vanishing theorem \cite [Theorem I]{Ta85} in the proof of \cite [Proposition on p. 2]{AP97}. 
\end{proof}

\begin{rema}\label{no-small}
Mori \cite{Mo82} classifies the smooth projective threefolds by extremal contractions and obtains that the smooth projective threefolds has no small extremal contractions. F. Campana and H\"oring--Peternell \cite{HP15, HP16, HP24, CHP16, CHP23} and Das--Ou \cite{DO23,DO24} prove that every smooth K\"ahler 3-folds admits a Mori fiber space or a good minimal model. So based on these and Peternell \cite{Pe98}, one classifies the smooth non-algebraic K\"ahler threefolds by extremal contractions in K\"ahler category and concludes that every smooth non-algebraic K\"ahler threefold only has divisorial extremal contractions or fiber type contractions. 
\end{rema}
\end{lemm}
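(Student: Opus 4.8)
The plan is to transplant the argument of Andreatta--Peternell \cite[Proposition on p.~2]{AP97} into the analytic K\"ahler setting, using the analytic relative Grauert--Riemenschneider vanishing \cite[Theorem I]{Ta85} and the analytic relative Kawamata--Viehweg vanishing \cite[Theorem 2.21]{DH24a} in place of their algebraic counterparts. First I would reduce to a statement about the exceptional divisor $E$ itself. Tensoring the structure sequence of $E\subset X$ by $\mathcal{O}_X(E)$ gives
\[
0\longrightarrow \mathcal{O}_X\longrightarrow \mathcal{O}_X(E)\longrightarrow \mathcal{O}_E(E)\longrightarrow 0,
\]
and the induced long exact sequence of higher direct images, combined with $R^if_*\mathcal{O}_X=0$ for $i>0$, yields $R^if_*\mathcal{O}_X(E)\cong R^i(f|_E)_*N_{E/X}$ for every $i\ge 1$, where $N_{E/X}=\mathcal{O}_E(E)$ is the normal bundle. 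The vanishing $R^if_*\mathcal{O}_X=0$ holds because the target $Y$ has terminal, hence rational, singularities: indeed $R^if_*\omega_X=0$ for $i>0$ by \cite[Theorem I]{Ta85}, and the rationality (Cohen--Macaulayness) of the singularities of $Y$ promotes this to $R^if_*\mathcal{O}_X=0$; alternatively, when $f$ is the blow-up of a smooth curve in a smooth $Y$ this is classical.

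It then remains to prove $R^i(f|_E)_*N_{E/X}=0$ for $i>0$. Here I would invoke the classification of divisorial contractions of K\"ahler threefolds (Peternell \cite[Main Theorem]{Pe98}, cf.\ Remark \ref{no-small}), from which $E$ is either smooth or has a single Du Val ($A_1$) singularity; in particular $E$ is normal with canonical, hence klt, singularities and $K_E$ is $\mathbb{Q}$-Cartier. Adjunction on the smooth threefold $X$ gives $K_E=(K_X+E)|_E$, so
\[
N_{E/X}=\mathcal{O}_X(E)\big|_E=K_E+\bigl(-K_X\big|_E\bigr).
\]
Since $f$ contracts a $K_X$-negative extremal ray, $-K_X$ is $f$-ample, hence $-K_X|_E$ is $(f|_E)$-ample. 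Applying the analytic relative Kawamata--Viehweg vanishing \cite[Theorem 2.21]{DH24a} to the klt pair $(E,0)$ (or, if one prefers, pulling back to a resolution of $E$, where the Du Val discrepancies vanish, and applying it there) gives $R^i(f|_E)_*\bigl(K_E+(-K_X|_E)\bigr)=0$ for $i>0$, which is exactly $R^i(f|_E)_*N_{E/X}=0$. Combined with the previous paragraph, $R^if_*\mathcal{O}_X(E)=0$ for $i>0$.

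The main obstacle is not the cohomological bookkeeping, which is identical to \cite{AP97}, but rather verifying that every input genuinely survives in the analytic, a priori non-projective, K\"ahler category: that $Y$ and $E$ carry the claimed rational, resp.\ canonical, singularities (this rests on the recent analytic K\"ahler minimal model program and Peternell's classification of divisorial contractions of threefolds), that adjunction $K_E=(K_X+E)|_E$ holds with $K_E$ being $\mathbb{Q}$-Cartier, and that the relative Grauert--Riemenschneider and Kawamata--Viehweg vanishing theorems are available for proper, merely bimeromorphic, morphisms in this generality. Once these are granted, the proof goes through word for word.
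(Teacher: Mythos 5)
Your proposal is correct and is essentially the paper's own proof written out in full: the paper simply instructs the reader to rerun the Andreatta--Peternell argument with the analytic relative Kawamata--Viehweg vanishing of Das--Hacon and Takegoshi's analytic Grauert--Riemenschneider vanishing substituted for their algebraic counterparts, which is exactly the structure-sequence reduction to $R^i(f|_E)_*N_{E/X}$ plus adjunction that you carry out. The only cosmetic imprecision is your parenthetical appeal to Cohen--Macaulayness for $R^if_*\mathcal{O}_X=0$; the cleaner statement is that $Y$ has terminal, hence rational, singularities (which in the analytic category is again a consequence of the cited vanishing theorems), but this does not affect the argument.
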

\begin{theo}\label{Kahler nef open}
Let $\pi:\mathcal{X}\rightarrow \Delta$ be a smooth K\"ahler family of $n$-folds with $n\leq3$. If $K_{X_{0}}$ is not nef, then no $K_{X_{t}}$  is nef for any sufficiently small $t$.
\end{theo}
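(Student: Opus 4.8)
The plan is to reduce, via Lemma~\ref{nef-K_X}, to exhibiting on each nearby fiber a rational curve of negative canonical degree, and to obtain such a curve by deforming a suitable $K_{X_0}$-negative subvariety of $X_0$ sideways into $\mathcal{X}$. I will use repeatedly the following mechanism. Since $X_0$ is a fiber, its normal bundle $N_{X_0/\mathcal{X}}$ is trivial, so any smooth closed subvariety $Z_0\subset X_0$ sits in $0\to N_{Z_0/X_0}\to N_{Z_0/\mathcal{X}}\to\mathcal{O}_{Z_0}\to 0$. If $H^1(Z_0,N_{Z_0/X_0})=0$ and $H^1(Z_0,\mathcal{O}_{Z_0})=0$, then $H^1(Z_0,N_{Z_0/\mathcal{X}})=0$, so the Douady space of $\mathcal{X}$ is smooth at $[Z_0]$; moreover the connecting map $H^0(\mathcal{O}_{Z_0})\to H^1(N_{Z_0/X_0})$ vanishes, so $H^0(N_{Z_0/\mathcal{X}})\to H^0(\mathcal{O}_{Z_0})\cong\mathbb{C}$ is onto, hence the structure map to $\Delta$ from the component through $[Z_0]$ is a submersion at $[Z_0]$, and $Z_0$ deforms to a subvariety $Z_t\subset X_t$ for all small $t$, deformation equivalent to $Z_0$ in $\mathcal{X}$ (so $K_{\mathcal{X}}\cdot Z_t=K_{\mathcal{X}}\cdot Z_0$ in the curve case). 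This is the analytic deformation step underlying \cite{AP97}.

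For $n\le 2$ this already suffices. Assume $K_{X_0}$ not nef. The case $n=1$ is trivial (the genus is constant). For $n=2$, the cone theorem for surfaces supplies a $K_{X_0}$-negative extremal rational curve $C_0$ with $C_0^2\ge-1$, so $N_{C_0/X_0}\cong\mathcal{O}_{\mathbb{P}^1}(C_0^2)$ has $H^1=0$ and the mechanism applies with $Z_0=C_0$. For $n=3$: by Lemma~\ref{nef-K_X} and the structure of the cone of curves in the terminal K\"ahler minimal model program there is a $K_{X_0}$-negative extremal ray $R$ of $\overline{NA}(X_0)$, contracted by a morphism $\phi:X_0\to Y_0$ in the K\"ahler category (Theorem~\ref{kahler contraction}); by Remark~\ref{no-small}, $\phi$ is of fiber type or divisorial. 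If $\phi$ is of fiber type, then $X_0$ is uniruled, hence $K_{X_0}$ is not pseudo-effective; either invoke the stability of pseudo-effectivity (so $K_{X_t}$ is not pseudo-effective, hence not nef, for all $t$), or apply the mechanism to a free rational curve $C_0\subset X_0$, for which $N_{C_0/X_0}$ is globally generated, $H^1=0$ and $K_{X_0}\cdot C_0<0$.

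Suppose now $\phi$ is divisorial with exceptional divisor $E_0$, and run through the finitely many types in Peternell's classification \cite{Pe98}. In every type except $E_0\cong\mathbb{P}^2$ with $N_{E_0/X_0}\cong\mathcal{O}_{\mathbb{P}^2}(-2)$, the rational curves of $R$ sweeping $E_0$ have normal bundle $\mathcal{O}(0)\oplus\mathcal{O}(-1)$ or $\mathcal{O}(1)\oplus\mathcal{O}(-1)$ on $\mathbb{P}^1$, so $H^1=0$ and the mechanism applies with $Z_0=C_0$. In the exceptional type the extremal line $\ell_0\subset E_0\cong\mathbb{P}^2$ has $N_{\ell_0/X_0}\cong\mathcal{O}(1)\oplus\mathcal{O}(-2)$, so $H^1\ne 0$; here I instead deform the divisor, taking $Z_0=E_0\cong\mathbb{P}^2$, which satisfies $H^1(\mathbb{P}^2,N_{E_0/X_0})=H^1(\mathbb{P}^2,\mathcal{O}(-2))=0$ and $H^1(\mathbb{P}^2,\mathcal{O}_{\mathbb{P}^2})=0$ (the vanishing $R^i\phi_*\mathcal{O}_{X_0}(E_0)=0$ of Lemma~\ref{vanishing theorem} gives this uniformly for divisorial contractions). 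The mechanism then produces $E_t\subset X_t$; since $\mathbb{P}^2$ is rigid, $E_t\cong\mathbb{P}^2$, and a line $\ell_t\subset E_t$, which is a deformation of $\ell_0$ (whose normal bundle in $E_0$ is $\mathcal{O}(1)$, again covered by the mechanism), satisfies $K_{X_t}\cdot\ell_t=K_{\mathcal{X}}\cdot\ell_t=K_{\mathcal{X}}\cdot\ell_0=K_{X_0}\cdot\ell_0<0$.

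In all cases, for every sufficiently small $t$ one obtains a rational curve $C_t\subset X_t$ (a general, hence irreducible, member of the deforming family, or one of its components) with $K_{X_t}\cdot C_t=K_{X_0}\cdot C_0<0$ by deformation equivalence in $\mathcal{X}$, and Lemma~\ref{nef-K_X} then shows $K_{X_t}$ is not nef, which proves the theorem. The main obstacle, as I see it, is the divisorial case: confirming through Peternell's classification \cite{Pe98} the exact normal bundles of the curves sweeping each exceptional divisor, and in particular handling the genuinely exceptional type $E_0\cong\mathbb{P}^2$, $N\cong\mathcal{O}(-2)$, where the extremal curve is obstructed and one is forced to deform the $\mathbb{P}^2$ instead---together with making sure that the K\"ahler cone and contraction theorems supply exactly the structural input on $X_0$ that the argument requires.
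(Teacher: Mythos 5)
Your proposal is correct and follows essentially the same route as the paper: reduce via Lemma \ref{nef-K_X} to producing a $K$-negative rational curve on nearby fibers, invoke the K\"ahler contraction theorem and Peternell's classification, and deform the relevant $K_{X_0}$-negative cycle sideways using the triviality of $N_{X_0/\mathcal{X}}$ together with an $H^1$-vanishing. The only notable difference is that in the divisorial case the paper deforms the exceptional divisor $E$ uniformly in all four types (its anti-ample normal bundle gives $H^1(E,N_{E/X_0})=0$ at once, via Lemma \ref{vanishing theorem}), which sidesteps the case-by-case normal-bundle computations for the extremal curves that you yourself flag as the main obstacle.
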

\begin{proof}
If $n=1$, then all fibers are smooth projective curves. If $K_{X_0}$ is not nef, then $\int_{X_t}c_1(X_t)=\int_{X_0}c_1(K_{X_0})<0$ and so $K_{X_t}$ is not nef.

If $n=2$, then all fibers are smooth compact K\"ahler surfaces. If $K_{X_0}$ is not nef, then there is a $K_{X_0}$-negative rational curve $C_0$ which is a $(-1)$-curve or $K_{X_0}$ is not pseudo-effective by \cite[Corollary 2.4]{BPV84}. On the one hand, the $(-1)$-curve $C_0$ can be deformed to the nearby fibers $X_t$ by \cite[Proposition 3.2]{Wi78}. Therefore, $K_{X_t}$ is not nef for small $t$.  On the other hand, if $K_{X_0}$ is not pseudo-effective, the proof is the same as the first case of $n=3$ below. 

If $n=3$, by Theorem \ref{kahler contraction}, there is an extremal contraction $\phi: X_0\rightarrow Y_0$. Since $X_0$ is a smooth K\"ahler threefold, $\phi$ is not small by Remark \ref{no-small}. Firstly, we assume that $\dim_{\mathbb{C}} Y_0<\dim_{\mathbb{C}} X_0$. Then $X_0$ is uniruled and so is $X_t$ by Fujiki \cite[Proposition 2.3]{Fu81} and Levine \cite{Le81}. And \cite{Br06} implies that $K_{X_t}$ is not pseudo-effective and thus not nef. Secondly, we assume that $\phi$ is a divisorial contraction and $E$ is the $\phi$-exceptional divisor which is the union of some $K_{X_0}$-negative curves. 
Then \cite[Theorem 3.3]{Mo82} and \cite[Main Theorem]{Pe98} implies that $E$ with its normal bundle $N_{E|X_0}$ is one of the following 
$$(\mathbb{P}^2,\mathcal{O}(-1)),(\mathbb{P}^2,\mathcal{O}(-2)),(\mathbb{P}^1\times \mathbb{P}^1, \mathcal{O}(-1,-1)),({Q_0},\mathcal{O}(-1)),$$
where $Q_0$ is the quadric cone. In every case, the normal bundle $N_{E|X_0}$ is anti-ample. And $$N_{E|\mathcal{X}}=N_{E|X_0}\oplus \mathcal{O}$$ and
therefore $E$ can deformed to every $X_t$, so that $K_{X_t}$ is not nef by Lemma \ref{nef-K_X}. 
\end{proof}
\begin{rema}
We can still follow the arguments of \cite{AP97} by using Lemma \ref{vanishing theorem} of K\"ahler version and Remark \ref{no-small} to prove Theorem \ref{Kahler nef open}.   
\end{rema}

\section{Applications: global stability of semiampleness and deformation rigidity}
In this section, we obtain several applications of the main theorems: the global stability of semiampleness of the (adjoint) canonical line bundles, the stability of minimal manifolds of general type and deformation rigidity of projective manifolds with semiample canonical line bundles under smooth K\"ahler deformation. 
\subsection{Global stability of semiampleness of (adjoint) canonical line bundles} We start with: 
\begin{lemm}{\label{plt-criterion}}
Let $\pi: \mathcal{X}\rightarrow \Delta$ be a smooth family and fix some fiber $X_t$. If $(\mathcal{X},X_t+\Sigma_{i} a_iD_i)$ is a log smooth pair where $0<a_i<1$ and $X_t+\Sigma_{i} D_i$ is a simple normal crossing divisor, then $({X_t},D_t)$ is a klt pair where $D_t:=(\Sigma_{i}a_iD_i)|_{X_t}$.  
\end{lemm}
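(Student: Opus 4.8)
The plan is to read off the statement as the adjunction formula for the log smooth pair $(\mathcal{X},X_t+\sum_i a_iD_i)$ restricted to the boundary component $X_t$ of coefficient one. First I would unwind the hypotheses on $X_t$: since $\pi$ is smooth, $\mathcal{X}$ is a complex manifold and $X_t$ is a smooth hypersurface; since $X_t+\sum_i D_i$ is a simple normal crossing divisor, no $D_i$ contains $X_t$, each $D_i$ is smooth, and $X_t$ meets $\bigcup_i D_i$ transversally with simple normal crossings. Consequently each $D_i|_{X_t}$ is a smooth prime divisor on $X_t$, the divisor $D_t=\sum_i a_i\,(D_i|_{X_t})$ is a simple normal crossing $\mathbb{Q}$-divisor whose coefficients are precisely the $a_i\in(0,1)$, so that $\lfloor D_t\rfloor=0$; and $K_{X_t}+D_t$ is automatically $\mathbb{Q}$-Cartier because $X_t$ is smooth. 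In other words, the pair $(X_t,D_t)$ is again log smooth, with all boundary coefficients strictly between $0$ and $1$.

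Next I would invoke ordinary adjunction, $\omega_{X_t}\cong\bigl(\omega_{\mathcal{X}}\otimes\mathcal{O}_{\mathcal{X}}(X_t)\bigr)\big|_{X_t}$, i.e.\ $K_{X_t}=(K_{\mathcal{X}}+X_t)|_{X_t}$; note that in the log smooth situation the restriction of $\sum_i a_iD_i$ to $X_t$ contributes no ``different''/correction term, so the adjoint boundary really is $D_t=(\sum_i a_iD_i)|_{X_t}$. It then remains to prove the elementary fact that a log smooth pair $(V,B)$ with $\lfloor B\rfloor=0$ is klt. Since $(X_t,D_t)$ is already log smooth, the identity $\mathrm{id}_{X_t}$ is a log resolution of it, with no exceptional divisors and with strict transform $D_t$ of coefficients $<1$; hence, by the standard principle that klt-ness may be checked on a single log resolution (as developed in the analytic minimal model program of \cite{Na87,Fu22,DHP24}), $(X_t,D_t)$ is klt. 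Alternatively one computes directly: a divisor $E$ obtained by successively blowing up strata of the snc divisor $\sum_i (D_i|_{X_t})$, extracted at the first step along a stratum of codimension $c$ contained in exactly $k$ of the $D_i|_{X_t}$, has log discrepancy $a(E,X_t,D_t)=c-\sum_{l=1}^{k}a_{i_l}>c-k\ge 0$, and iterating keeps all log discrepancies positive; together with $\lfloor D_t\rfloor=0$ this gives klt.

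I do not expect any genuine obstacle here: the content is ``adjunction for a log smooth pair'', which in the smooth-ambient, simple-normal-crossing case is elementary, followed by the routine observation that log smooth boundaries with coefficients in $[0,1)$ are klt. The only two points that deserve a line of care are (a) that, because $\mathcal{X}$ is smooth and $X_t$ is an snc component of coefficient one, the restriction of $\sum_i a_iD_i$ to $X_t$ carries no correction term, so that $D_t$ is literally $\sum_i a_i(D_i|_{X_t})$; and (b) the discrepancy bookkeeping in the direct verification, which one may in any case circumvent by citing the log smooth $\Rightarrow$ klt statement from the references above.
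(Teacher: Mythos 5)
Your proof is correct, and it reaches the conclusion by a slightly different (and somewhat more elementary) route than the paper. The paper works upstairs: it applies the discrepancy formula for snc pairs \cite[Corollary 2.31]{KM98} to the \emph{ambient} pair $(\mathcal{X},X_t+\sum_i a_iD_i)$ to get $\mathrm{discrep}\geq\min_i\{-a_i\}>-1$, concludes that this pair is plt, and then invokes the plt adjunction theorem \cite[Theorem 5.50 (1)]{KM98} to deduce that $(X_t,D_t)$ is klt. You instead restrict first: using that $\mathcal{X}$ is smooth and $X_t+\sum_i D_i$ is snc, you note that ordinary adjunction produces no different, so $(X_t,D_t)$ is itself a log smooth pair with all coefficients in $(0,1)$, and you then run the (same) snc discrepancy computation downstairs on $X_t$. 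Your route avoids Theorem 5.50 (1) and the notion of the different altogether, at the price of having to check separately that the restricted pair is log smooth, which you do carefully (no $D_i$ contains $X_t$, transversality, $\lfloor D_t\rfloor=0$). The paper's route is the one that would survive weakening the hypothesis that the boundary is snc along $X_t$, since plt adjunction does not need that; for the statement as given, both arguments are complete. Your discrepancy bookkeeping (log discrepancy $c-\sum_{l}a_{i_l}>c-k\geq 0$ for a stratum of codimension $c$ lying on $k$ components) is the content of \cite[Corollary 2.31]{KM98}, so citing that result in place of the iteration, as you also suggest, closes the only point that might otherwise need elaboration.
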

\begin{proof}
Since $(\mathcal{X},X_t+\Sigma_{i} a_iD_i)$ is a log smooth pair, \cite[Corollary 2.31]{KM98} implies that
$$\text{disrep}(\mathcal{X},X_t+\Sigma_{i} a_iD_i)\geq \min_{i}\{-a_i\}>-1.$$
Then $(\mathcal{X},X_t+\Sigma_{i} a_iD_i)$ is a plt pair and hence $(X_t, D_t)$ is a klt pair according to the adjunction formula \cite[Theorem 5.50 (1)]{KM98}.
\end{proof}
\begin{coro}\label{K+L}
Let $\pi:\mathcal{X}\rightarrow \Delta$ be a smooth projective family and $L$ a $\pi$-semiample line bundle on $\mathcal{X}$. If $K_{X_0}+L_{0}$ is semiample, then for any $t\in\Delta$, $K_{X_t}+L_t$ is semiample and  $K_\mathcal{X}+L$ is thus $\pi$-semiample over $\pi^{-1}(U_t)$, where $U_t$ is a Zariski neighborhood of $t$  in $\Delta$.
\end{coro}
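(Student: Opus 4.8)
The plan is to deduce the semiampleness of $K_{X_t}+L_t$ from the nefness provided by Theorem \ref{pair nef deformation}, combined with the deformation invariance of the Kodaira dimension and of the numerical dimension of the adjoint canonical line bundle, and then to invoke Kawamata's criterion that a nef and good adjoint canonical line bundle is semiample. By Theorem \ref{pair nef deformation}, since $K_{X_0}+L_0$ is semiample, hence nef, the bundle $K_{X_t}+L_t$ is nef for every $t\in\Delta$. Moreover, exactly as in the proof of Proposition \ref{nef limit}, $\pi$-semiampleness of $L$ together with Cartan's Theorem \ref{cartanA} over the Stein base $\Delta$ shows that $L$, and hence every power $\ell L$, is semi-positive. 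Applying Siu's deformation invariance of semipositively twisted plurigenera \cite[Corollary 0.2]{Si02} to the semi-positive line bundle $\ell L$ for each $\ell\ge 1$ then shows that $h^{0}(X_t,\ell(K_{X_t}+L_t))=h^{0}(X_t,\ell K_{X_t}+\ell L_t)$ is independent of $t$; hence $\kappa(K_{X_t}+L_t)=\kappa(K_{X_0}+L_0)$ for all $t\in\Delta$.

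Next I would show the numerical dimension is constant as well. Fix a $\pi$-ample line bundle $\mathcal{A}$ on $\mathcal{X}$ and set $n=\dim_{\mathbb{C}}X_t$; by adjunction $(K_{\mathcal{X}}+L)|_{X_t}$ is identified with $K_{X_t}+L_t$, and for each $0\le k\le n$ the intersection number $\int_{X_t}c_1(K_{\mathcal{X}}+L)^{k}\,c_1(\mathcal{A})^{n-k}$ equals the value at $t$ of the fibre integral $\pi_{*}(c_1(K_{\mathcal{X}}+L)^{k}c_1(\mathcal{A})^{n-k})\in H^{0}(\Delta;\mathbb{R})=\mathbb{R}$, hence is independent of $t$. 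Since the numerical dimension of a nef class is the largest $k$ for which such a top intersection number against an ample class is positive, $\nu(K_{X_t}+L_t)=\nu(K_{X_0}+L_0)$ for all $t\in\Delta$. As $K_{X_0}+L_0$ is semiample it is good, i.e. $\nu(K_{X_0}+L_0)=\kappa(K_{X_0}+L_0)$; combining the two invariance statements gives $\nu(K_{X_t}+L_t)=\kappa(K_{X_t}+L_t)$, so each $K_{X_t}+L_t$ is nef and good. Since $L_t$ is semiample, Kawamata's characterization of semiampleness of adjoint canonical line bundles \cite[Theorem 6.1]{Ka85} then yields that $K_{X_t}+L_t$ is semiample for every $t\in\Delta$.

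For the relative assertion I would run Nakayama's base-change argument. Fix $t\in\Delta$ and choose $m$ such that $m(K_{X_t}+L_t)$ is basepoint-free on $X_t$. By the invariance of $h^{0}(X_s,m(K_{X_s}+L_s))$ established above and Grauert's theorem, $\pi_{*}\mathcal{O}_{\mathcal{X}}(m(K_{\mathcal{X}}+L))$ is locally free near $t$ with formation commuting with base change, so the evaluation morphism $\pi^{*}\pi_{*}\mathcal{O}_{\mathcal{X}}(m(K_{\mathcal{X}}+L))\to\mathcal{O}_{\mathcal{X}}(m(K_{\mathcal{X}}+L))$ restricts over $X_t$ to the surjective evaluation of a basepoint-free linear system. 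By right exactness of restriction and Nakayama's lemma, the locus $Z\subset\mathcal{X}$ where this morphism fails to be surjective is a closed analytic subset disjoint from $X_t$; as $\pi$ is proper, $\pi(Z)$ is an analytic, hence discrete, proper subset of $\Delta$ not containing $t$, and $U_t:=\Delta\setminus\pi(Z)$ is then a Zariski neighbourhood of $t$ over which $K_{\mathcal{X}}+L$ is $\pi$-semiample.

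The step demanding the most care is the deformation invariance of the numerical dimension and the resulting upgrade of the a priori inequality $\nu\ge\kappa$ to genuine goodness of $K_{X_t}+L_t$: one must ensure that $\nu(K_{X_t}+L_t)$ is read off from top intersection numbers of the fixed class $c_1(K_{\mathcal{X}}+L)$ against the relative polarization $c_1(\mathcal{A})$, so that it is a topological constant along the family, and that \cite[Theorem 6.1]{Ka85} applies verbatim to the honest semiample line bundle $L_t$ on the smooth fibre $X_t$, with no pair-theoretic refinement needed.
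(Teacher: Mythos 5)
Your proposal is correct and follows essentially the same route as the paper: nefness for all $t$ via Theorem \ref{pair nef deformation}, constancy of $\kappa$ via Siu's invariance of semipositively twisted plurigenera, constancy of $\nu$ for the nef adjoint bundle, Kawamata's criterion \cite[Theorem 6.1]{Ka85}, and then Grauert base change plus Nakayama's lemma for the relative statement. The only difference is cosmetic: where the paper cites \cite[\S~2.a of Chapter V]{n04} for the invariance of the numerical dimension, you give the direct argument via the topological constancy of the intersection numbers $c_1(K_{\mathcal{X}}+L)^{k}\cdot c_1(\mathcal{A})^{n-k}$ along the fibers, which is a valid self-contained substitute.
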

\begin{proof}
Fix a fiber $X_t$ for any $t\in \Delta$. Since $L$ is a $\pi$-semiample line bundle, $L^{\otimes m}$ is $\pi$-globally generated for some positive integer $m\geq 1$. 
And since $\Delta$ is a Stein manifold, Cartan's theorem $A$ (Theorem \ref{cartanA}) implies that $L^{\otimes m}$ is globally generated. 
So $L$ is semi-positive and there is a general smooth section $$D\in H^{0}(\mathcal{X},L^{\otimes m})$$ such that $X_t$ is not contained in $\text{Supp}\ D$ and $X_t+D$ is a simple normal crossing divisor. So $(\mathcal{X},X_t+\frac{1}{m}D)$ is a log smooth pair and Lemma \ref{plt-criterion} implies that $(X_t, \frac{1}{m}D_t)$ is klt.

The semiampleness of $K_{X_0}+L_0$ implies that $$\kappa(K_{X_0}+L_0)=\nu(K_{X_0}+L_0)$$ and $K_{X_0}+L_0$ is nef, which in turn implies that $K_{X_t}+L_t$ is nef for any $t\in\Delta$ by Theorem \ref{pair nef deformation}. For any positive integer $m$, Siu's invariance of semipositively twisted plurigenera \cite[Corollary 0.2]{Si02} implies that the dimension $h^0(X_t,mK_{X_t}+mL_t)$ of global sections of $mK_{X_t}+mL_t$ is independent of $t\in \Delta$. Thus, for any $t\in\Delta$, the Kodaira--Iitaka dimensions satisfy $$\kappa(K_{X_t}+L_t)=\kappa(K_{X_0}+L_0)$$ 
and the numerical dimensions also satisfy  $$\nu(K_{X_t}+L_t)=\nu(K_{X_0}+L_0)$$  since $\pi$ is a smooth projective family and  $K_{X_t}+L_t$ is nef by \cite[$\S$ 2.a of Chapter V]{n04}. So $$\kappa(K_{X_t}+L_t)=\nu(K_{X_t}+L_t)$$
for $t\in \Delta$. Since $K_{X_t}+L_t$ is nef and good, \cite[Theorem 6.1]{Ka85} implies that $K_{X_t}+L_t$ is semiample for any $t\in\Delta$.

Hence, $K_\mathcal{X}+L$ is $\pi$-semiample near $X_t$ since $K_{X_t}+L_t$ is semiample and $h^0(X_t,mK_{X_t}+mL_t)$ is independent of $t\in \Delta$ by Siu's invariance of semipositively twisted plurigenera \cite[Corollary 0.2]{Si02}. Take $0\in\Delta$ as an example. In fact, by the semiampleness of $K_{X_0}+L_{0}$,   there exists some positive integer $m$ such that $m(K_{X_0}+L_{0})$ is generated by global sections $H^0(X_0,m(K_{X_0}+L_{0}))$, 
i.e., the elements of $H^0(X_0,m(K_{X_0}+L_{0}))$ generate the stalks $\mathcal{O}_{X_0}(m(K_{X_0}+L_{0}))_x$, which are  
$$(\mathcal{O}_{\mathcal{X}}(m(K_{\mathcal{X}}+L))|_{X_0})_x=(\mathcal{O}_{\mathcal{X}}(m(K_{\mathcal{X}}+L)))_x/\hat{\mathfrak{m}}_0 (\mathcal{O}_{\mathcal{X}}(m(K_{\mathcal{X}}+L)))_x$$ 
for all $x\in X_0$. Here $\hat{\mathfrak{m}}_0$ is the ideal sheaf of $\mathcal{O}_{\mathcal{X}}$ generated by the inverse image of the maximal ideal $\mathfrak{m}_0$ of $\mathcal{O}_{\Delta,0}$ under the canonical map $\pi^*\mathfrak{m}_0\rightarrow \mathcal{O}_\mathcal{X}$.  
Then Siu's invariance of semipositively twisted plurigenera \cite[Corollary 0.2]{Si02} implies that $h^0(X_t,mK_{X_t}+mL_t)$ is independent of $t\in \Delta$. So Grauert's base change theorem and the adjunction formula give rise to
$$(\pi_*\mathcal{O}_{\mathcal{X}}(m(K_{\mathcal{X}}+L)))_0/\mathfrak{m}_0(\pi_*\mathcal{O}_{\mathcal{X}}(m(K_{\mathcal{X}}+L)))_0=H^0(X_0,\mathcal{O}_{\mathcal{X}}(m(K_{\mathcal{X}}+L))|_{X_0})=H^0(X_0,m(K_{X_0}+L_{0})).$$
Moreover, Nakayama lemma implies that the natural morphism of sheaves
$$\rho:\pi^*\pi_*\mathcal{O}_{\mathcal{X}}(m(K_{\mathcal{X}}+L))\rightarrow \mathcal{O}_{\mathcal{X}}(m(K_{\mathcal{X}}+L))$$
is surjective in the points of $X_0$. Hence, $\rho$ will be surjective in a Zariski neighbourhood $\pi^{-1}(U_0)$ of $X_0$ for some Zariski neighborhood $U_0\subset\Delta$ of $0$.  This argument should be traced back to \cite[TH\'EOR\`EME 2.1]{Gr61}.
\end{proof}
\begin{rema}\label{semiample open}
Let $L=\mathcal{O}_\mathcal{X}$, and then one still sees that the semiampleness of the canonical line bundle is (globally) stable under a smooth projective deformation.
\end{rema}
\begin{rema}
An alternate proof of Corollary \ref{K+L} can proceed via Siu's invariance of semipositively twisted plurigenera \cite[Corollary 0.2]{Si02} and Kawamata's relative basepoint-free theorem (cf. \cite[Theorem 6-1-11]{KMM87}, \cite[Theorem 5.8]{Na87} or \cite[Theorem 1.1]{Fu11}).
A much related argument can be found in the proof of Corollary \ref{kahler3sm} and more details are left to interested readers. 
Recall Kawamata's relative basepoint-free theorem: 
Let $\pi: X\rightarrow S$ be a proper  morphism from a normal complex variety onto a normal variety and  $(X,\Gamma)$ a klt pair. 
Assume the following conditions:
\begin{enumerate}[$(1)$]
\item $H$ is a $\pi$-nef $\mathbb{Q}$-Cartier divisor on $X$;
\item $H-(K_{X}+\Gamma)$ is $\pi$-nef and \emph{$\pi$-abundant} (i.e., $$\kappa(X_\eta,(H-(K_{X}+\Gamma))|_{X_\eta})=\nu(X_\eta,(H-(K_{X}+\Gamma))|_{X_\eta})$$ for any general point $\eta\in S$;
\item for any general point $\eta\in S$, $\kappa(X_\eta,(aH-(K_{X}+\Gamma))|_{X_\eta})\ge 0$  and $$\nu(X_\eta,(aH-(K_{X}+\Gamma))|_{X_\eta})=\nu(X_\eta,(H-(K_{X}+\Gamma))|_{X_\eta})$$ for some $1< a\in\mathbb{Q}$;
\item every component $C$ of any special fiber $X_0$ is compact complex variety in the \emph{Fujiki class $\mathcal{C}$} (i.e., bimeromorphic to a compact K\"ahler manifold) and $H|_C,(H-(K_{X}+\Gamma))|_C$ are \emph{quasi-nef} (i.e., its pullback is nef under the bimeromorphic morphism just mentioned)  
\end{enumerate}
Then there exists positive integers $p$ and $m_0$ such that $$\pi^*\pi_*\mathcal{O}_{X}(mpH)\rightarrow \mathcal{O}_{X}(mpH)$$ is surjective near $X_0$ for all $m\ge m_0$.
\end{rema}
\begin{rema}
Fujino \cite[Theorem 23.2]{Fu22} proves the following abundance theorem by using the local finite
generation of log canonical rings, to reduce the abundance conjecture for projective morphisms of complex analytic spaces to the original abundance conjecture for projective varieties.
Assume that \emph{abundance conjecture} for a projective klt pair $(X,D)$ holds in dimension $n$, i.e., if $K_X+\Delta$ is nef, then $K_X+D$ is semiample.  
Let $\pi: X \to Y$ be a projective surjective morphism of normal complex varieties with $\dim_{\mathbb{C}} X - \dim_{\mathbb{C}} Y = n$, and $(X,D)$ a klt pair.  
Assume that $K_X+D$ is $\pi$-nef.  
Let $W$ be a Stein compact subset of $Y$ such that $\Gamma(W,\mathcal{O}_Y)$ is noetherian. 
Then $K_X+D$ is $\pi$-semiample over some open neighborhood of $W$. By Fujino's proof, it seems that the relative semiampleness part of Corollary~\ref{K+L} can be refined to state that $K_{\mathcal{X}}+L$ is $\pi$-semiample over some open neighborhood of $W$ as above.
\end{rema}
Two more important propositions will be used in the proof of deformation invariance Theorem \ref{gmgenus} of generalized plurigenera.  
\begin{prop}[{\cite[Corollary 3.15]{Na87}}]\label{Ri-lf}
Let $X$ be a normal
complex variety with only klt singularities and $\pi: X\rightarrow\Delta$ a proper surjective morphism. 
Assume that every irreducible component of $X_0$ is a variety in the Fujiki class $\mathcal{C}$ 
and that $K_X$ is $\pi$-semiample. 
Then $R^i \pi_* \mathcal{O}_X(K_X^{\otimes m})$ is locally free at $0$ for any $i \geq 0$ and $m \geq 1$.
\end{prop}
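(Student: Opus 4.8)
\textit{Proof sketch.} The plan is to deduce local freeness from torsion-freeness, exploiting that the base is one-dimensional. Since $\pi$ is proper, $R^i\pi_*\mathcal{O}_X(K_X^{\otimes m})$ is coherent by Grauert's direct image theorem, and its stalk at $0$ is a finitely generated module over $\mathcal{O}_{\Delta,0}\cong\mathbb{C}\{z\}$, which is a discrete valuation ring; over such a ring a finitely generated module is free as soon as it is torsion-free. Hence it suffices to show that $R^i\pi_*\mathcal{O}_X(K_X^{\otimes m})$ has no nonzero subsheaf supported on $\{0\}$.

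To obtain this I would invoke a relative Koll\'ar-type torsion-freeness theorem. For $m\geq 1$ write $K_X^{\otimes m}=\omega_X\otimes\omega_X^{\otimes(m-1)}$, where $\omega_X^{\otimes(m-1)}$ is $\pi$-semiample because a tensor power of a $\pi$-semiample sheaf is $\pi$-semiample (for $m=1$ this factor is simply $\mathcal{O}_X$, and one lands in the classical case $R^i\pi_*\omega_X$). The torsion-freeness of $R^i\pi_*(\omega_X\otimes L)$ over $\Delta$, for $L$ a $\pi$-semiample line bundle on the klt variety $X$, is the substance of Nakayama's argument in \cite{Na87}. Concretely, one passes to the relative ample model $\varphi:X\to Y$ over $\Delta$, with $\varphi^*\mathcal{A}\cong\mathcal{O}_X(K_X^{\otimes \ell})$ for some $\ell\geq 1$ and some line bundle $\mathcal{A}$ on $Y$ ample over $\Delta$, $\varphi_*\mathcal{O}_X=\mathcal{O}_Y$; by the projection formula the assertion for all $m$ reduces to flatness over $\Delta$ of the finitely many coherent sheaves $R^q\varphi_*\mathcal{O}_X(K_X^{\otimes r})$, $0\leq r<\ell$, together with Serre vanishing for large $\mathcal{A}$-twists along $Y\to\Delta$. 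The hypothesis that every irreducible component of $X_0$ lies in the Fujiki class $\mathcal{C}$ is exactly what makes the relevant vanishing and torsion-freeness on the central fibre available analytically: after a log resolution $\mu:X'\to X$ one runs the usual Koll\'ar decomposition using Takegoshi's analytic Grauert--Riemenschneider and vanishing theorem \cite{Ta85} and analytic Kawamata--Viehweg vanishing (in the form of \cite{DH24a}).

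I expect the genuine obstacle to be establishing this torsion-freeness (equivalently, the flatness of the $R^q\varphi_*\mathcal{O}_X(K_X^{\otimes r})$) in the analytic klt setting: Koll\'ar's original decomposition theorem is algebraic and rests on mixed Hodge theory for smooth projective varieties, and these inputs must be replaced by their analytic counterparts, which is precisely where the Fujiki class $\mathcal{C}$ assumption on the components of $X_0$ cannot be dispensed with. Once torsion-freeness is known, the passage to local freeness at $0$ is immediate from the one-dimensionality of $\Delta$ recorded above; and combining it with Grauert's base-change theorem identifies $R^i\pi_*\mathcal{O}_X(K_X^{\otimes m})\otimes\mathbb{C}(t)$ with $H^i(X_t,K_{X_t}^{\otimes m})$, which is what is used afterwards for the deformation invariance of the generalized plurigenera.
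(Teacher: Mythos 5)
The paper offers no proof of this proposition: it is imported verbatim as \cite[Corollary 3.15]{Na87}, so the only ``in-paper argument'' is the citation. Your sketch is a faithful reconstruction of Nakayama's route and is sound in outline: reduce local freeness at $0$ to torsion-freeness of the stalk over the discrete valuation ring $\mathcal{O}_{\Delta,0}\cong\mathbb{C}\{z\}$ (this step genuinely uses that the base is one-dimensional), and obtain torsion-freeness of $R^i\pi_*(\omega_X\otimes L)$ for $\pi$-semiample $L$ from the analytic Koll\'ar-type torsion-freeness and injectivity theorems of \cite{Na87} (with input from \cite{Ta85}); you also correctly locate the role of the Fujiki class $\mathcal{C}$ hypothesis on the components of $X_0$, which is what replaces projectivity in the Hodge-theoretic part of those theorems. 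Two cautions on the details. First, your ``concrete'' reduction through the relative canonical model $\varphi:X\to Y$ with relative Serre vanishing for large $\mathcal{A}$-twists only computes $R^i\pi_*\mathcal{O}_X(K_X^{\otimes m})$ for $m\gg 0$; for small $m$ (including $m=1$) the Leray spectral sequence does not collapse this way, and one must apply the torsion-freeness theorem directly to the decomposition $mK_X=K_X+(m-1)K_X$ with $(m-1)K_X$ $\pi$-semiample, as you in fact assert in your primary formulation --- so the two halves of your sketch are not quite consistent, but the first half carries the full statement. Second, for klt but non-Gorenstein $X$ the factorization $K_X^{\otimes m}=\omega_X\otimes\omega_X^{\otimes(m-1)}$ must be read at the level of $\mathbb{Q}$-divisors and reflexive powers, since $(m-1)K_X$ need not be Cartier; this is handled in \cite{Na87} but should not be glossed over.
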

\begin{prop}[{\cite[Theorem 12.11 (Cohomology and Base Change)]{Ht}}]\label{cbc}
Let $f : X \to Y$ be a projective morphism of noetherian schemes, 
and $\mathcal{F}$ a coherent sheaf on $X$, flat over $Y$. 
Let $y$ be a point of $Y$. Then:

\begin{enumerate}[$(a)$]
  \item If the natural map
\begin{equation}\label{bci}
\varphi^i(y): R^i f_*(\mathcal{F}) \otimes k(y) \;\longrightarrow\; H^i(X_y, \mathcal{F}(y))
\end{equation}
  is surjective, then it is an isomorphism, and the same is true for any $y'$ in a suitable neighborhood of $y$.

  \item Assume that $\varphi^i(y)$ is surjective. Then the following two conditions are equivalent:
  \begin{enumerate}[$(i)$]
    \item $\varphi^{i-1}(y)$ is also surjective;
    \item $R^i f_*(\mathcal{F})$ is locally free in a neighborhood of $y$.
  \end{enumerate}
\end{enumerate}
\end{prop}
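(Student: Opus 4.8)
Since Proposition~\ref{cbc} is Grothendieck's classical cohomology-and-base-change theorem (\cite[Theorem III.12.11]{Ht}), the plan is to recall its proof strategy: reduce both assertions to linear algebra over a Noetherian ring by means of a finite free complex that computes cohomology universally, and then derive (a) and (b) from commutative-algebra lemmas on such complexes together with Nakayama's lemma. Both assertions are local on $Y$, so I would first replace $Y$ by an affine Noetherian scheme $\operatorname{Spec}A$ and, for the statements at $y$, localize $A$ at the prime corresponding to $y$, so that $A$ becomes a Noetherian local ring with residue field $k(y)$; the maps $\varphi^i$ and the sheaves $R^if_*$ are compatible with these reductions by flat base change.

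The main technical step is to construct a \emph{finite complex of free modules computing cohomology universally}. Choosing a finite affine open cover of $X$ (possible since $f$ is of finite type), the \v{C}ech complex $C^\bullet$ of $\mathcal{F}$ is a complex of $A$-modules each of which is flat over $A$, because $\mathcal{F}$ is flat over $Y$ and finite intersections of the chosen affines are again affine; moreover $H^i(C^\bullet)\cong R^if_*\mathcal{F}$ and, for every $A$-module $M$, $H^i(C^\bullet\otimes_A M)$ computes $R^if_*(\mathcal{F}\otimes_{\mathcal{O}_Y} M)$, so that $M=k(y)$ recovers $H^i(X_y,\mathcal{F}(y))$; and the cohomology of $C^\bullet$ is finitely generated since $f$ is projective and $\mathcal{F}$ coherent. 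I then invoke the standard lemma that a bounded-above complex of flat $A$-modules with finitely generated cohomology admits a quasi-isomorphism from a bounded complex $L^\bullet$ of finitely generated \emph{free} $A$-modules which remains a quasi-isomorphism after $-\otimes_A M$ for every $M$; flatness of the $C^i$ is precisely what is used here. Replacing $C^\bullet$ by $L^\bullet$, one has $R^if_*\mathcal{F}=H^i(L^\bullet)$ and $H^i(X_y,\mathcal{F}(y))=H^i(L^\bullet\otimes_A k(y))$, with $\varphi^i(y)$ the canonical map $H^i(L^\bullet)\otimes_A k(y)\to H^i(L^\bullet\otimes_A k(y))$.

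With $(L^\bullet,d^\bullet)$ a bounded complex of finitely generated free $A$-modules, the whole statement becomes linear algebra. Right exactness of $-\otimes_A M$ gives the identification $T^i(M):=H^i(L^\bullet\otimes_A M)\cong\ker\!\big(\coker(d^{i-1})\otimes_A M\to L^{i+1}\otimes_A M\big)$, together with the exact sequence $0\to H^i(L^\bullet)\to\coker(d^{i-1})\to\operatorname{im}(d^i)\to 0$ in which $\operatorname{im}(d^i)\hookrightarrow L^{i+1}$. For (a): surjectivity of $\varphi^i(y)$ implies, via Nakayama applied to the finitely generated module $H^i(L^\bullet)$ and the comparison with the right-exact functor $H^i(L^\bullet)\otimes_A-$, that $T^i$ is right exact near $y$; hence $H^i(L^\bullet)\otimes_A M\xrightarrow{\ \sim\ }T^i(M)$ for all $M$, so $\varphi^i$ is an isomorphism at $y$ and, by finite presentation, at every nearby $y'$. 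For (b): granting $\varphi^i(y)$ surjective (so $T^i$ is right exact near $y$), one shows from the exact sequence above and a further Nakayama argument that $H^i(L^\bullet)$ is locally free near $y$ if and only if $T^{i-1}$ is also right exact near $y$, equivalently if and only if $\varphi^{i-1}(y)$ is surjective. I would organize these two implications as Grothendieck's exchange lemmas for the functors $T^\bullet$.

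The hard part is not any single step but the correct setup of the universal free complex $L^\bullet$: one must guarantee that $H^i(L^\bullet\otimes_A M)\cong H^i(C^\bullet\otimes_A M)$ holds \emph{functorially in all $A$-modules $M$ at once}, and this is exactly what makes it legitimate to convert the geometric statement into bookkeeping with kernels, images and cokernels of a complex of finite free modules over a local ring. Once that is in place, conclusions (a) and (b) follow routinely, if tediously, from Nakayama's lemma and the openness of the locus where a finitely generated module over a Noetherian scheme is free.
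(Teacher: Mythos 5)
The paper gives no proof of this Proposition; it is quoted verbatim as Hartshorne's Theorem III.12.11 (Cohomology and Base Change), and your outline is precisely the standard argument from that source: reduction to a bounded complex $L^\bullet$ of finite free modules computing cohomology universally, followed by the Nakayama/exchange-lemma bookkeeping for the functors $T^i(M)=H^i(L^\bullet\otimes_A M)$. Your sketch is correct and matches the cited reference's approach, so there is nothing to reconcile with the paper itself.
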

Proposition \ref{cbc} also holds in the complex analytic setting. Recall that $\mathcal{F}$ is \emph{cohomologically flat in dimension $q$} if $\varphi^i(y)$ in \eqref{bci} are bijective for any $y\in Y$ and $i=q,q-1$, or if $R^q f_*(\mathcal{F})$ is locally free and $\varphi^q(y)$  in \eqref{bci} are bijective for any $y\in Y$.
This notion plays a core role in Grauert's direct image theory and when $Y$ is reduced, it equivalent to the local constancy of the function $y\mapsto\dim_{\mathbb{C}}H^q(X_y, \mathcal{F}(y))$. 

\begin{theo}\label{gmgenus}
Let $\pi:\mathcal{X}\rightarrow \Delta$ be a smooth projective family of $n$-folds. If $K_{X_0}$ is semiample, then for any $i \geq 0$ and $m \geq 1$, the generalized  $m$-genus $P^{i}_m(X_t):=\dim_{\mathbb{C}}H^i(X_t,K^{\otimes m}_{X_t})$ is independent of $t\in\Delta$.
\end{theo}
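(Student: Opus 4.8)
The plan is to reduce Theorem \ref{gmgenus} to the relative semiampleness statement already obtained in Corollary \ref{K+L} together with the local freeness of higher direct images from Proposition \ref{Ri-lf} and the cohomology-and-base-change machinery of Proposition \ref{cbc}. First I would fix an arbitrary $t_0\in\Delta$ and, by Corollary \ref{K+L} applied with $L=\mathcal{O}_{\mathcal{X}}$ (see Remark \ref{semiample open}), obtain that $K_{X_{t_0}}$ is semiample and that $K_{\mathcal{X}}$ is $\pi$-semiample over $\pi^{-1}(U_{t_0})$ for some Zariski-open neighborhood $U_{t_0}\subset\Delta$ of $t_0$; since $\Delta$ is connected and one-dimensional this shows the hypothesis propagates, so without loss of generality we may work over such a $U_{t_0}$ and assume $K_{\mathcal{X}}$ is $\pi$-semiample there. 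Because $\mathcal{X}$ is smooth (hence normal with only klt singularities) and every fiber $X_t$ is a smooth projective, in particular K\"ahler, manifold, the hypotheses of Proposition \ref{Ri-lf} are met, so $R^i\pi_*\mathcal{O}_{\mathcal{X}}(K_{\mathcal{X}}^{\otimes m})$ is locally free near $t_0$ for every $i\geq 0$ and $m\geq 1$.

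Next I would invoke the base-change comparison. By the adjunction formula $K_{\mathcal{X}}|_{X_t}=K_{X_t}$, so the coherent sheaf $\mathcal{F}:=\mathcal{O}_{\mathcal{X}}(K_{\mathcal{X}}^{\otimes m})$, which is flat over $\Delta$ (it is a line bundle on $\mathcal{X}$ and $\pi$ is flat), restricts on the fiber to $K_{X_t}^{\otimes m}$. Now apply Proposition \ref{cbc} in the analytic setting: starting from the top degree $i=n$ (where $\varphi^{n}(t)$ is automatically surjective, or alternatively from any degree where $R^{i+1}\pi_*\mathcal{F}=0$), part $(b)$ gives that since $R^i\pi_*\mathcal{F}$ is locally free for all $i$, the map $\varphi^{i}(t)$ is surjective for all $i$ and all $t$ near $t_0$, and then part $(a)$ upgrades each $\varphi^i(t)$ to an isomorphism. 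Hence $R^i\pi_*\mathcal{F}\otimes k(t)\cong H^i(X_t,K_{X_t}^{\otimes m})$ for all $t$ near $t_0$, and the left-hand side has locally constant rank because $R^i\pi_*\mathcal{F}$ is locally free. Therefore $t\mapsto P^i_m(X_t)=\dim_{\mathbb{C}}H^i(X_t,K_{X_t}^{\otimes m})$ is locally constant near $t_0$; since $t_0\in\Delta$ was arbitrary and $\Delta$ is connected, it is constant on all of $\Delta$.

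The main obstacle I anticipate is purely bookkeeping rather than conceptual: one must be careful that Proposition \ref{Ri-lf}, as stated by Nakayama, requires $K_{\mathcal{X}}$ to be $\pi$-semiample (relative semiampleness), which is not literally the hypothesis of the theorem — the hypothesis is only that one fiber has semiample canonical bundle. This is exactly the gap bridged by Corollary \ref{K+L}, so the logical order matters: one cannot apply Proposition \ref{Ri-lf} until relative semiampleness near $X_{t_0}$ has been established, and that in turn rests on Theorem \ref{pair nef deformation} (global nefness) plus Siu's invariance of plurigenera and Kawamata's criterion. A secondary technical point is confirming that Proposition \ref{cbc}(a) and (b) — stated for noetherian schemes — apply here; this is handled by the remark following Proposition \ref{cbc} that the cohomology-and-base-change results hold in the complex analytic setting, so it suffices to cite that. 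Finally I would note the slight redundancy that, once local freeness of $R^i\pi_*\mathcal{F}$ is known and $\Delta$ is reduced, the invariance is immediate from the identification of cohomological flatness with local constancy of fiber cohomology dimensions recorded just after Proposition \ref{cbc}; this gives an even shorter route to the conclusion.
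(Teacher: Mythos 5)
Your proposal is correct and follows essentially the same route as the paper: Corollary \ref{K+L} (with $L=\mathcal{O}_{\mathcal{X}}$) to get relative semiampleness, Proposition \ref{Ri-lf} for local freeness of $R^i\pi_*\mathcal{O}_{\mathcal{X}}(K_{\mathcal{X}}^{\otimes m})$, and then descending induction via cohomology and base change (Proposition \ref{cbc}) from the top degree to identify each $H^i(X_t,K_{X_t}^{\otimes m})$ with the fiber of a locally free sheaf. The paper's proof is exactly this descending induction starting at $i=n+1$, so no further comment is needed.
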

\begin{proof} 
By Corollary \ref{K+L} and Proposition \ref{Ri-lf}, $R^i \pi_* \mathcal{O}_{\mathcal{X}}(K_{\mathcal{X}}^{\otimes m})$ is locally free at any $t\in\Delta$ for any $i \geq 0$ and $m \geq 1$. Run Proposition \ref{cbc}.(b) first with $i=n+1$, to obtain that 
\[
    \varphi^n(t): R^n \pi_* \mathcal{O}_{\mathcal{X}}(K_{\mathcal{X}}^{\otimes m}) \otimes \mathbb{C}(t) \;\longrightarrow\; H^n(X_t,K^{\otimes m}_{X_t})
  \]
is surjective and thus an isomorphism by Proposition \ref{cbc}.(a), and then with $i=n$, to obtain that 
\[
    \varphi^{n-1}(t): R^{n-1} \pi_* \mathcal{O}_{\mathcal{X}}(K_{\mathcal{X}}^{\otimes m}) \otimes \mathbb{C}(t) \;\longrightarrow\; H^{n-1}(X_t,K^{\otimes m}_{X_t})
  \]
is an isomorphism. By induction, for any $i\geq 0$, 
\[
    \varphi^{i}(t): R^{i} \pi_* \mathcal{O}_{\mathcal{X}}(K_{\mathcal{X}}^{\otimes m}) \otimes \mathbb{C}(t) \;\longrightarrow\; H^{i}(X_t,K^{\otimes m}_{X_t})
  \]
is an isomorphism. Hence, the local freeness of $R^i \pi_* \mathcal{O}_{\mathcal{X}}(K_{\mathcal{X}}^{\otimes m})$ gives the deformation invariance of $P^{i}_m(X_t)$. 
\end{proof}
\begin{rema}
In \cite{HL06}, N. Hao--L. Li show that the higher cohomology of the
pluricanonical bundle is not deformation invariant by computing the dimensions of the first and second cohomology groups
of all the pluricanonical bundles for Hirzebruch surfaces, and the dimensions of the first and second cohomology groups of the second pluricanonical bundles for a blow-up of a projective plane along
finite many distinct points. Notice that the anti-canonical line bundle of the Hirzebruch surface $\mathbb{F}_m: = \mathbb{P}(\mathcal{O}_{\mathbb{P}^1} \oplus \mathcal{O}_{\mathbb{P}^1}(m))$ (with $m \geq 0$) is {big} for any $m \geq 0$, {nef} if and only if $m \leq 2$, {ample} if and only if $m < 2$.

\end{rema}
Similarly, one obtains the global stability of semiampleness of canonical line bundles and deformation invariance of generalized plurigenera of threefolds under a K\"ahler smooth deformation.
\begin{coro}\label{kahler3sm}
Let $\pi:\mathcal{X}\rightarrow \Delta$ be a smooth K\"ahler family of threefolds. If $K_{X_0}$ is nef (or equivalently semiample), then for any $t\in\Delta$, $K_{X_t}$ is semiample and $K_\mathcal{X}$ is $\pi$-semiample over $\pi^{-1}(U_t)$, where $U_t$ is a Zariski neighborhood of $t$. Furthermore, for any $i \geq 0$ and $m \geq 1$, the generalized  $m$-genus $P^{i}_m(X_t)$ is independent of $t\in\Delta$.
\end{coro}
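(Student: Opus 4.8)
The plan is to mirror the proof of Corollary \ref{K+L} and Theorem \ref{gmgenus}, replacing the input Theorem \ref{pair nef deformation} (projective case) by Theorem \ref{Kahler nef open} (Kähler threefold case) and the projective abundance/semiampleness characterization by the abundance theorem for Kähler threefolds, Theorem \ref{abundance}. First I would record the equivalence ``$K_{X_0}$ nef $\iff$ $K_{X_0}$ semiample'': one direction is trivial, and the other is Theorem \ref{abundance} applied to the smooth (hence $\mathbb{Q}$-factorial, terminal) compact Kähler threefold $X_0$. Next, assuming $K_{X_0}$ is not nef would, by Theorem \ref{Kahler nef open}, force $K_{X_t}$ to be non-nef for all small $t$; contrapositively, if $K_{X_0}$ is nef then $K_{X_t}$ is nef for all $t$ in some neighborhood of $0$, and then a connectedness/density argument as in Remark \ref{remark nef locus} — the nef locus is closed by (the threefold analogue of) Remark \ref{closed}, and open by Theorem \ref{Kahler nef open} — shows $K_{X_t}$ is nef, hence semiample by Theorem \ref{abundance}, for every $t\in\Delta$.

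Once fiberwise semiampleness of $K_{X_t}$ is in hand, the deformation invariance of $P^i_m(X_t)$ and the relative semiampleness of $K_{\mathcal{X}}$ follow exactly as in the proofs of Theorem \ref{gmgenus} and Corollary \ref{K+L}, via Nakayama's machinery. Concretely: M. Levine's deformation invariance of plurigenera \cite{Le1} (valid for smooth Kähler families of threefolds, which is why we need $n=3$ here and cannot simply invoke Siu) gives that $h^0(X_t,K_{X_t}^{\otimes m})$ is independent of $t$; since $\mathcal{X}$ is smooth and each $X_t$ is a compact Kähler threefold (a fortiori in the Fujiki class $\mathcal{C}$), Proposition \ref{Ri-lf} applied to $\pi:\mathcal{X}\to\Delta$ — after shrinking $\Delta$ to a Stein neighborhood of any chosen $t$ and using that $K_{\mathcal{X}}$ is $\pi$-semiample there — shows $R^i\pi_*\mathcal{O}_{\mathcal{X}}(K_{\mathcal{X}}^{\otimes m})$ is locally free; then the downward induction on $i$ using Proposition \ref{cbc}(a),(b) exactly as in Theorem \ref{gmgenus} upgrades the base-change maps $\varphi^i(t)$ to isomorphisms for all $i$, yielding deformation invariance of every $P^i_m$. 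The $\pi$-semiampleness of $K_{\mathcal{X}}$ over a Zariski neighborhood $\pi^{-1}(U_t)$ of each $X_t$ is obtained by the Grauert base change plus Nakayama lemma argument (the one attributed to \cite{Gr61}) reproduced at the end of the proof of Corollary \ref{K+L}: the invariance of $h^0(X_t,K_{X_t}^{\otimes m})$ makes $\pi_*\mathcal{O}_{\mathcal{X}}(K_{\mathcal{X}}^{\otimes m})$ commute with base change, and semiampleness on $X_t$ together with Nakayama's lemma propagates surjectivity of $\pi^*\pi_*\mathcal{O}_{\mathcal{X}}(mK_{\mathcal{X}})\to\mathcal{O}_{\mathcal{X}}(mK_{\mathcal{X}})$ from $X_t$ to a Zariski neighborhood.

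The main obstacle — and the only place the threefold hypothesis is genuinely used beyond Theorem \ref{Kahler nef open} — is providing the analytic foundations in the non-projective Kähler case: one must ensure that Proposition \ref{Ri-lf} (Nakayama's local freeness of higher direct images of $\pi$-semiample $K_{\mathcal{X}}^{\otimes m}$) and the cohomology-and-base-change formalism of Proposition \ref{cbc} are available for a not-necessarily-projective proper morphism $\pi:\mathcal{X}\to\Delta$ whose fibers lie in the Fujiki class $\mathcal{C}$. This is exactly the setting of \cite[Corollary 3.15]{Na87} (which is stated for fibers in class $\mathcal{C}$, not merely projective fibers), so the citation covers it; similarly Proposition \ref{cbc} holds in the complex analytic category as noted in the excerpt. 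The remaining steps are routine transcriptions of the projective arguments, so I would present the proof tersely, invoking the proofs of Theorem \ref{gmgenus} and Corollary \ref{K+L} for the parts that carry over verbatim and highlighting only the substitution of Theorem \ref{abundance} for the projective abundance input and of Levine's \cite{Le1} for Siu's invariance of plurigenera.
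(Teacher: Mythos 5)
There is a genuine gap in your first paragraph. Theorem \ref{Kahler nef open} states that if $K_{X_0}$ is \emph{not} nef then $K_{X_t}$ is not nef for all small $t$; its contrapositive is ``if $K_{X_{t_i}}$ is nef for some sequence $t_i\to 0$, then $K_{X_0}$ is nef'', i.e., the theorem gives \emph{closedness} of the nef locus (openness of the non-nef locus). What you write --- ``contrapositively, if $K_{X_0}$ is nef then $K_{X_t}$ is nef for all $t$ in some neighborhood of $0$'' --- is the inverse, not the contrapositive, and is unjustified; likewise your claim that the nef locus is ``open by Theorem \ref{Kahler nef open}'' attributes to that theorem exactly the property it does not provide. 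Since openness of the nef locus in a K\"ahler family is not established anywhere else in your argument, the open-and-closed/connectedness step collapses and fiberwise nefness of $K_{X_t}$ does not follow from what you have written.

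The paper closes this gap by producing a \emph{dense} set of fibers with nef (indeed globally generated pluricanonical) bundle before invoking closedness: by Levine's invariance of plurigenera, $h^0(X_t,K_{X_t}^{\otimes m})$ is constant on a Zariski neighborhood $U$ of $0$; Grauert base change plus Nakayama's lemma then propagate the surjectivity of $\pi^*\pi_*K_{\mathcal X}^{\otimes m}\to K_{\mathcal X}^{\otimes m}$ from the points of $X_0$ to $\pi^{-1}(U')$ for a dense Zariski-open $U'\subset U$, so $K_{X_t}^{\otimes m}$ is globally generated, hence $K_{X_t}$ nef, for $t\in U'$. Only then does the closedness provided by Theorem \ref{Kahler nef open} give nefness for every $t\in\Delta$, and Theorem \ref{abundance} upgrades this to semiampleness fiber by fiber. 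You in fact list all of these ingredients in your second paragraph, but you deploy them only after assuming fiberwise semiampleness ``is in hand''; reordering the argument so that the Levine--Grauert--Nakayama propagation comes first would repair the proof. The remainder of your proposal (Proposition \ref{Ri-lf}, Proposition \ref{cbc}, and the descending induction on $i$ for the invariance of $P^i_m$) agrees with the paper.
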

\begin{proof} Although the semiampleness part is analogous to that of Corollary \ref{K+L}, we still include a proof here since there indeed exist several slight differences between them. 

By the proof of \cite[Corollary 1.10]{Le1}, the canonical line bundle of each generic fiber in $\pi$ is semiample. 
In fact, by the assumption of the semiampleness of $K_{X_0}$,   there exists some positive integer $m$ such that $K_{X_0}^{\otimes m}$ is generated by global sections $H^0(X_0,K_{X_0}^{\otimes m})$, 
i.e., the elements of $H^0(X_0,K_{X_0}^{\otimes m})$ generate the stalks 
$$(K^{\otimes m}_{X_0})_x=(K^{\otimes m}_{\mathcal{X}}|_{X_0})_x=(K^{\otimes m}_{\mathcal{X}}/\hat{\mathfrak{m}}_0 {K^{\otimes m}_{\mathcal{X}}})_x=K^{\otimes m}_{\mathcal{X},x}/\hat{\mathfrak{m}}_0 K^{\otimes m}_{\mathcal{X},x}$$ 
for all $x\in X_0$. 
Then by Bertini's theorem and \cite[Theorem  1.9]{Le1}, $P_m(X_t)$ is independent of $t\in U$, where $U$ is a Zariski neighbourhood of $0$. So Grauert's base change theorem and the adjunction formula give rise to
$$(\pi_*K_\mathcal{X}^{\otimes m})_0/\mathfrak{m}_0(\pi_*K_\mathcal{X}^{\otimes m})_0=H^0(X_0,K_{\mathcal{X}}^{\otimes m}|_{X_0})=H^0(X_0,K_{X_0}^{\otimes m}).$$
Moreover, Nakayama lemma implies that the natural morphism of sheaves
$$\pi^*\pi_*K_\mathcal{X}^{\otimes m}\rightarrow K_\mathcal{X}^{\otimes m}$$
is surjective in the points of $X_0$. Hence,  it will be surjective in a Zariski neighbourhood $\pi^{-1}(U')$ of $X_0$ in $\pi^{-1}(U)$ for some dense Zariski open subset $U'\subset U$. This in turn implies that $K_{X_t}^{\otimes m}$ is generated by global sections $H^0(X_t,K_{X_t}^{\otimes m})$ for any $t\in U'$.

Therefore, the semiampleness of the corollary follows from the closedness Theorem \ref{Kahler nef open} of nefness and the abundance Theorem \ref{abundance} of K\"ahler threefolds, while the deformation invariance of any generalized plurigenera follows from the same argument as in  Theorem \ref{gmgenus}.
\end{proof}
\begin{rema}
On the deformation invariance of plurigenera, compared with \cite[Corollary 1.10]{Le1} and \cite[Corollary 6.4]{Na87}, Corollary \ref{kahler3sm} assumes only one fiber to admit the semiample canonical line bundle in the smooth K\"ahler family of threefolds.   
\end{rema}
\subsection{Stability of minimal manifolds of general type}
We need a relative version of the projectivity criterion of smooth Moishezon and K\"ahler manifolds.
\begin{lemm}\label{projective criteria}
Let $\pi:X\rightarrow Y$ be a smooth K\"ahler and Moishezon morphism over a Stein manifold $Y$ and fix a Stein compact subset $W\subset Y$ which satisfies the condition $(P)$. Then there is an open neighborhood $U$ of $W$ such that $\pi|_{\pi^{-1}(U)}$ is projective.
\end{lemm}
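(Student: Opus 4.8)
The plan is to mirror the classical proof that a compact K\"ahler Moishezon manifold is projective, carried out fiberwise and uniformly over the compact set $W$.

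First I would reduce to a convenient form of the hypotheses. Since $Y$ is Stein it carries a K\"ahler form $\omega_Y$, and (replacing the given relatively K\"ahler form by its sum with $C\pi^*\omega_Y$ for $C\gg 0$ over a relatively compact Stein neighborhood of $W$, using that $W$ is Stein compact) we may assume there is a genuine K\"ahler form $\omega$ on $X=\pi^{-1}(U)$ for some Stein open $U\Supset W$; shrinking $Y$, rename $U=Y$. Next, since $\pi$ is smooth and Moishezon, by the bimeromorphic embedding results \cite[Theorem 1.4]{RT21}, \cite[Theorem 1.8]{RT22} it carries a $\pi$-big line bundle; equivalently, after shrinking $Y$ around $W$, $\pi$ is bimeromorphic over $Y$ to a projective morphism $p:Z\to Y$. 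Applying Hironaka's analytic, relative elimination of indeterminacy to $X\dashrightarrow Z$ produces a smooth complex manifold $\widetilde{X}$ together with a projective bimeromorphic morphism $\mu:\widetilde{X}\to X$ over $Y$ (a finite composition of blow-ups along smooth centers lying over $W$) and a bimeromorphic morphism $\nu:\widetilde{X}\to Z$ over $Y$. Hence $\widetilde{X}\to Y$ is a projective morphism, so after a further shrinking of $Y$ around $W$ there is an effective divisor $H$ on $\widetilde{X}$, a positive multiple of a relatively very ample one, with $\mathcal{O}_{\widetilde{X}}(H)|_{\widetilde{X}_s}$ ample for every $s\in Y$. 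Moreover $\widetilde{X}$, being obtained from the K\"ahler manifold $X$ by blow-ups along smooth centers, is again K\"ahler; fix a K\"ahler form $\widetilde{\omega}$ on it, and note that $\mu^*\omega$ is a smooth semipositive $d$-closed $(1,1)$-form which is strictly positive on $\widetilde{X}_s\setminus\mathrm{Exc}(\mu_s)$ for each $s$.

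Now set $D:=\mu_*H$, which is a Cartier divisor on $X$ since $X$ is smooth, and write $\mu^*D=H+\sum_i c_iE_i$ with $E_i$ the $\mu$-exceptional prime divisors and $c_i\in\mathbb{Z}_{\geq 0}$. The claim is that $\mathcal{O}_X(D)$ is ample on $X_s$ for every $s$ in a neighbourhood of $W$; granting this, a line bundle ample on all fibers of a proper morphism over a neighbourhood of the Stein compact $W$ is $\pi$-ample over such a neighbourhood (relative Kodaira/Grauert), so $\pi|_{\pi^{-1}(U)}$ is projective for a suitable $U\supset W$, as desired. For the fiberwise claim I would run the relative Nakai--Moishezon criterion on $X_s$: for an irreducible $V\subseteq X_s$ not contained in $\mu_s(\mathrm{Exc}(\mu_s))$ one passes to its strict transform $\widetilde{V}$ and uses that $(\mu_s^*\mathcal{O}_X(D))|_{\widetilde{V}}=H|_{\widetilde{V}}+(\text{effective})$ is the sum of an ample and an effective class, hence big and nef, to get positivity of the top self-intersection; for $V$ contained in the exceptional image one argues by induction on dimension exactly as in the classical K\"ahler--Moishezon argument, controlling the $\mu$-exceptional contributions by means of $\mu_s^*\omega_s$ and $\widetilde{\omega}$ — this is the step where the K\"ahler hypothesis on $X$ (hence on $\widetilde{X}$) is genuinely used.

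I expect this last step — the fiberwise Nakai--Moishezon verification for subvarieties sitting inside the image of the $\mu$-exceptional locus, together with making the estimates uniform in $s$ over the compact $W$ — to be the main obstacle; it is precisely the point at which "Moishezon $+$ K\"ahler $\Rightarrow$ projective" is non-formal (recall Hironaka's non-K\"ahler Moishezon threefold), and the uniformity in $s$ is what upgrades fiberwise projectivity to relative projectivity over a neighbourhood of $W$. Everything else — the reduction to a global K\"ahler form, the existence of the projective resolution $\widetilde{X}$, and the passage from fiberwise ampleness to $\pi$-ampleness — is routine given the hypotheses and the cited results.
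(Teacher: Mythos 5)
Your proposal takes a genuinely different route from the paper's, which is a one-line delegation: it invokes Claudon--H\"oring's projectivity criterion for K\"ahler morphisms \cite[Theorem 1.1]{CH24} and observes that the MMP steps in their argument can be carried out using Fujino's minimal model program for projective morphisms over Stein spaces \cite[Theorem 1.7]{Fu22}. The preparatory reductions in your sketch are sound: globalizing the relative K\"ahler form over a neighborhood of the Stein compact $W$, producing a projective modification $\mu:\widetilde{X}\rightarrow X$ over $Y$ from the Moishezon hypothesis together with Hironaka's relative elimination of indeterminacy, and passing from fiberwise ampleness of a single global line bundle to $\pi$-ampleness near $W$.

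The core step, however, has a genuine gap which you flag but do not close, and which as stated cannot be closed. The divisor $D=\mu_{*}H$ is big on each fiber but in general is not even nef there: writing $\mu_{s}^{*}D=H_{s}+\sum_{i}c_{i}E_{i,s}$ with $c_{i}>0$, a curve $C$ contained in a fiber of an exceptional divisor $E_{i,s}$ over its center satisfies $D\cdot(\mu_{s})_{*}C=H_{s}\cdot C+\sum_{i}c_{i}\,E_{i,s}\cdot C$, and the negative exceptional self-intersection terms can dominate $H_{s}\cdot C$. So the fiberwise Nakai--Moishezon criterion simply fails for this particular $D$, and no induction on subvarieties of the exceptional image rescues it; one must either perturb to a different (rational) K\"ahler class or choose the polarization and the modification far more carefully --- this is precisely the non-formal content of ``K\"ahler $+$ Moishezon $\Rightarrow$ projective'' (the point at which Hironaka's example is excluded), and in the relative, uniform-over-$W$ setting it is exactly the content of \cite[Theorem 1.1]{CH24}. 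Writing ``I expect this last step to be the main obstacle'' defers rather than supplies the proof. Either quote \cite{CH24} as the paper does, or replace the Nakai--Moishezon step by their actual mechanism (a relative MMP contracting the obstruction to projectivity, available here via \cite[Theorem 1.7]{Fu22}), or by a fiberwise Demailly--P$\breve{\textrm{a}}$un-type mass-concentration argument made uniform over $W$.
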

\begin{proof}
The proof is the same as \cite[Theorem 1.1]{CH24} by B. Claudon--H\"oring. Just replace the MMP-steps by the minimal model program of projective morphism over a Stein analytic space \cite[Theorem 1.7]{Fu22}.
\end{proof}
And a special case of \cite[Theorem 1.4]{RT21}, \cite[Theorem 1.8]{RT22} is needed here.
\begin{prop}[{bimeromorphic embedding}]\label{be}
For a smooth K\"ahler family $\pi: \mathcal{X}\rightarrow \Delta$, if there exists a (global) line bundle $L$ over $\mathcal{X}$ such that 
the restrictions of $L$ to uncountably many fibers are big or equivalently there are uncountably many Moishezon fibers in $\pi$, then
for some $N\in \mathbb{N}$, there exists
a bimeromorphic map (over $\Delta$)
$$\Phi:\mathcal{X}\dashrightarrow\mathcal{Y}$$
to a subvariety $\mathcal{Y}$ of $\mathbb{P}^N\times\Delta$ and thus $\pi$ is a Moishezon family.
\end{prop}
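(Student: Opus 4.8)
The plan is to reduce the statement to the known relative bimeromorphic embedding theorem \cite[Theorem 1.4]{RT21}, \cite[Theorem 1.8]{RT22} by first establishing the equivalence of the two hypotheses and then checking that the hypothesis of that theorem is met. First I would observe that the two conditions in the statement --- that the restrictions $L|_{X_t}$ are big for uncountably many $t$, and that $\pi$ has uncountably many Moishezon fibers --- are equivalent. The forward direction is immediate from the definition of Moishezon (a compact complex manifold is Moishezon iff it carries a big line bundle, and here $L|_{X_t}$ furnishes one). For the reverse direction, suppose uncountably many fibers are Moishezon; then each such fiber carries \emph{some} big line bundle, but a priori these need not glue to a global $L$. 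Here I would invoke a countability argument on $\mathrm{Pic}(\mathcal{X})$ together with the relative Barlet cycle space / relative Picard scheme structure: the relative Picard group $\mathrm{Pic}(\mathcal{X}/\Delta)$ over the disk, modulo the image of $\mathrm{Pic}(\mathcal{X})$, decomposes into at most countably many components in a suitable sense, so that if uncountably many fibers carry big line bundles, uncountably many of them lie in the same component and hence are restrictions of a single global line bundle $L$ on $\mathcal{X}$ (after shrinking $\Delta$), whose restriction to those fibers is then big. This is the standard mechanism (as in \cite{Fu81}, \cite{Le81}) for passing from fiberwise to global line bundles in a deformation over a disk; it is exactly the kind of argument already alluded to for pseudo-effectiveness stability in the Remark following Lemma \ref{rema nef character}.

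Next, granting a global line bundle $L$ on $\mathcal{X}$ with $L|_{X_t}$ big for uncountably many $t$, I would apply \cite[Theorem 1.4]{RT21}, \cite[Theorem 1.8]{RT22} directly. Those results assert precisely that for a smooth K\"ahler family $\pi:\mathcal{X}\to\Delta$ equipped with a global line bundle whose restriction to a non-negligible (in the appropriate sense, uncountably many) set of fibers is big, there is, after possibly passing to $L^{\otimes N}$ for $N\gg0$, a bimeromorphic map over $\Delta$
$$\Phi:\mathcal{X}\dashrightarrow\mathcal{Y}\subset\mathbb{P}^N\times\Delta$$
realizing $\mathcal{X}$ bimeromorphically as a subvariety of projective-over-$\Delta$ space. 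The construction of $\Phi$ is the relative Kodaira map attached to $L^{\otimes N}$: one uses that, by upper semicontinuity and the bigness on an uncountable fiber set, $\pi_*(L^{\otimes N})$ has large enough rank, the relative linear system $|L^{\otimes N}|_{\mathcal{X}/\Delta}$ separates points generically on the general fiber, and the resulting rational map has image a subvariety of $\mathbb{P}(\pi_*L^{\otimes N})\subset\mathbb{P}^N\times\Delta$. Then $\mathcal{Y}\to\Delta$ is projective, so $\pi$ is bimeromorphic over $\Delta$ to a projective morphism, i.e., $\pi$ is a Moishezon family in the sense defined in the Preliminaries; in particular every fiber $X_t$ is Moishezon.

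The main obstacle I expect is the first step --- producing the \emph{global} line bundle $L$ from the merely fiberwise Moishezon hypothesis. The delicate point is that the set of fibers carrying a big line bundle is uncountable, so one must show that among these uncountably many fibers, uncountably many carry \emph{the same} (restriction of a) global class; this requires controlling how $\mathrm{Pic}(X_t)$ varies in the family, i.e., a statement that the relative N\'eron--Severi data form a countable union of ``constant'' loci over $\Delta$. In the projective or fiberwise-Moishezon setting this is classical (the relative Picard scheme has countably many components, each with connected fibers of bounded variation), and one invokes it together with the fact that a countable union of proper analytic subsets of $\Delta$ cannot be all of $\Delta$, so the complement is uncountable. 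Once $L$ is in hand, the rest is a citation of \cite[Theorem 1.4]{RT21}, \cite[Theorem 1.8]{RT22}; indeed the whole proposition is stated in the excerpt as ``a special case'' of those theorems, so the proof is essentially: (i) reduce to a global $L$ by the Picard-countability argument, and (ii) quote the relative bimeromorphic embedding theorem.
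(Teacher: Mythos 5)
Your proposal is correct and takes essentially the same route as the paper, which offers no proof at all and simply states the proposition as a special case of \cite[Theorem 1.4]{RT21} and \cite[Theorem 1.8]{RT22}; your reduction to those theorems is exactly what is intended. The only caveat is that your ``relative Picard scheme'' countability mechanism for producing the global $L$ should, in the K\"ahler (non-projective) setting, be phrased via the exponential sequence, the countably many classes in the local system $R^2\pi_*\mathbb{Z}$, and the local freeness of $R^2\pi_*\mathcal{O}_{\mathcal{X}}$ --- precisely the argument of Lemma \ref{projectivity} and of \cite{RT21,RT22} themselves --- but this is the content of the cited theorems rather than a gap in your outline.
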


\begin{coro}\label{min-general}
Let $\pi: \mathcal{X}\rightarrow \Delta$ be a smooth K\"ahler family. If the central fiber $X_0$ is a minimal manifold of general type, then all fibers are minimal manifolds of general type.
\end{coro}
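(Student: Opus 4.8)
The plan is to first manufacture uncountably many Moishezon fibers, so that the bimeromorphic embedding applies, and then reduce to the projective case already treated. First, ``$X_0$ minimal of general type'' means $K_{X_0}$ is nef and big; being big and nef it is good, hence semiample by \cite[Theorem 6.1]{Ka85}, and being big it makes $X_0$ Moishezon, hence projective since $X_0$ is K\"ahler. Moreover, for every $m\ge 2$ the bundle $(m-1)K_{X_0}$ is nef and big, so Kawamata--Viehweg vanishing gives $H^i(X_0,K_{X_0}^{\otimes m})=0$ for all $i\ge 1$; thus $\chi(X_0,K_{X_0}^{\otimes m})=h^0(X_0,K_{X_0}^{\otimes m})$, and since $\chi(X_t,K_{X_t}^{\otimes m})$ is constant in the flat family $\pi$, comparing the top-degree coefficients in $m$ of these Hirzebruch--Riemann--Roch polynomials yields $(K_{X_t})^n=(K_{X_0})^n>0$ for every $t\in\Delta$.

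Next I would control higher cohomology on the remaining fibers. Fix $m\ge 2$: by Grauert's upper semicontinuity of higher cohomology, each set $\{t\in\Delta:\ h^i(X_t,K_{X_t}^{\otimes m})>0\}$, $1\le i\le n$, is a closed analytic subset of $\Delta$ not containing $0$, hence discrete; let $D_m$ be their finite union, a discrete --- so at most countable --- subset of $\Delta$. For $t\notin D_m$ one then has $h^0(X_t,K_{X_t}^{\otimes m})=\chi(X_t,K_{X_t}^{\otimes m})=h^0(X_0,K_{X_0}^{\otimes m})$. Hence for every $t$ in the co-countable set $\Delta\setminus\bigcup_{m\ge 2}D_m$ the plurigenera $h^0(X_t,K_{X_t}^{\otimes m})$ coincide with those of $X_0$ for all $m$, so $\kappa(X_t)=\kappa(X_0)=n$ and $X_t$ is of general type; in particular uncountably many fibers of $\pi$ are Moishezon. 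Then Proposition \ref{be} makes $\pi$ a Moishezon family, and since $\pi$ is K\"ahler over the Stein base $\Delta$, Lemma \ref{projective criteria} shows that over a (disk) neighborhood $U$ of any prescribed closed disk $W\subset\Delta$ the restricted family $\pi^{-1}(U)\to U$ is smooth projective. As such disks $W$ exhaust $\Delta$, it suffices to prove the assertion for fibers over $U$: since $K_{X_0}$ is nef, Theorem \ref{pair nef deformation} applied with $L=\mathcal{O}_{\mathcal{X}}$ gives that $K_{X_t}$ is nef for all $t\in U$, and combined with $(K_{X_t})^n=(K_{X_0})^n>0$ this forces $K_{X_t}$ big; thus every $X_t$ is a minimal manifold of general type. (Alternatively, once $\pi^{-1}(U)\to U$ is projective one may simply invoke Corollary \ref{K+L} and Theorem \ref{gmgenus}, valid because $K_{X_0}$ is semiample.)

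The main obstacle is the middle step --- producing uncountably many Moishezon fibers to feed into Proposition \ref{be}: the naive hope that ``being of general type is an open condition'' is false in the non-algebraic category, this being precisely the mechanism behind Campana's instability of the Fujiki class $\mathcal{C}$, so one cannot merely assert that $K_{X_t}$ stays big near $0$. What rescues the argument is the interplay of Kawamata--Viehweg vanishing on $X_0$, Grauert's upper semicontinuity applied to the \emph{higher} cohomology groups (which is analytic, hence carves out discrete jumping loci over the one-dimensional base $\Delta$), and the deformation constancy of the Euler characteristic; everything afterwards is routine once the family has been shown projective over neighborhoods of compact subsets of the base.
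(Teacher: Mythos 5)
Your proof is correct and follows essentially the same route as the paper's: Kawamata--Viehweg vanishing on $X_0$ combined with Grauert's upper semicontinuity and Riemann--Roch to produce uncountably many fibers of general type, then the bimeromorphic embedding (Proposition \ref{be}), the projectivity criterion (Lemma \ref{projective criteria}) over neighborhoods of compact disks exhausting $\Delta$, and the nefness deformation theorem. The only (harmless) divergence is in how bigness is propagated to \emph{every} fiber: the paper invokes the bigness extension \cite[Corollary 4.3]{RT21}, whereas you deduce it from nefness together with the deformation invariance of $(K_{X_t})^n$, which works equally well once the family is known to be projective over the relevant neighborhood.
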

\begin{proof}
As $X_0$ is of general type and K\"ahler, $X_0$ is projective by Moishezon's theorem \cite{Mo66}. Since $K_{X_0}$ is big and nef, $H^i(X_0,K_{X_0}^{\otimes m})=0$ for any positive integers $m\geq 2$ and $i\geq 1$ by Kawamata--Viehweg  vanishing theorem. Grauert's upper semi-continuity theorem implies that there is a dense Zariski open subset $U_m\ni 0$ such that $H^i(X_t,K_{X_t}^{\otimes m})=0$ for any $t\in U_m$, $m\geq 2$ and $i\geq 1$. And by Riemann--Roch theorem, $$\dim_{\mathbb{C}}H^0(X_0,K_{X_0}^{\otimes m})=\dim_{\mathbb{C}}H^0(X_t,K_{X_t}^{\otimes m})$$ for any $t\in U_m$ and $m\geq 2$. So $$\kappa(X_t)=\kappa(X_0)=\dim_{\mathbb{C}}(X_0)=\dim_{\mathbb{C}}(X_t)$$ for any $t\in \bigcap_{m>1}U_m$. Then one obtains uncountably many fibers $X_{t}$ of general type. So the canonical line bundle $K_{X_{t}}$ is big for any $t\in\Delta$ according to the bigness extension {\cite[Corollary 4.3]{RT21}}. Hence, $\pi$ is a Moishezon family by the bimeromorphic embedding Proposition \ref{be}.

Choose the Stein compact subset $$W_r:=\{t\in \mathbb{C}\ |\ |t|\leq r\},$$ where $0<r<1$. Then there is an open neighborhood 
$$U_{r,\epsilon_r}:=\{t\in \mathbb{C}\ |\ |t|< r+\epsilon_r\}$$ of $W_r$ for small $\epsilon_r$ such that $\pi|_{\pi^{-1}(U_{r,\epsilon_r})}$ is projective by Lemma \ref{projective criteria}. Since $X_0$ is minimal, $K_{X_t}$ are nef for all $t\in U_{r,\epsilon_r}$ by the proof of Corollary \ref{nef locus}. And taking the limit $r\rightarrow 1$, we conclude that all fibers are minimal manifolds of general type. 
\end{proof}
\begin{rema} Corollary \ref{min-general} can be regarded as a slight generalization of \cite[Proposition 3.16]{Ca91}. 
Moreover, J. Koll\'ar \cite[Theorem 1]{Ko21} proves that in a flat proper family, if the central fiber is projective of general type with canonical singularity, then its nearby fibers are projective and of general type. Combining this with \cite[Corollary 4.3]{RT21} also gives a proof of the first part of Corollary \ref{min-general}.
\end{rema}

\subsection{Rigidity of projective manifolds with semiample canonical line bundles} 
We improve the recent deformation rigidity result by the first author--Liu \cite{LL24}  of projective manifolds with semiample canonical line bundles. 

We need Lemma \ref{projectivity} to construct a global line bundle on $\mathcal{X}$ which restricts on the special fibers to be ample.
\begin{lemm}{\label{projectivity}}
Let $\pi:\mathcal{X}\rightarrow\Delta$ be a smooth K\"ahler family, and $S$ a projective manifold. Assume that there is a subset $E\subseteq\Delta$ with accumulation points in $\Delta$ such that $X_{t}\cong S$ for any $t\in E$.  Then there exists a global line bundle $H$ on $\mathcal{X}$ such that $H|_{X_{t}}$ is ample for any $t\in E$.
\end{lemm}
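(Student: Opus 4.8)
## Proof proposal for Lemma \ref{projectivity}

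The plan is to exploit the fact that the family is K\"ahler and that the set $E$ has an accumulation point $t_0 \in \Delta$, so that $X_{t_0} \cong S$ by continuity of ``being isomorphic to $S$'' under Grauert-type arguments (or simply because, after passing to a subsequence, one lands in the isomorphism locus, which one wants to show is closed; but in any case $t_0 \in \overline{E}$ and we may assume $t_0 \in E$ after relabeling, since the statement is local near $t_0$). First I would transport an ample line bundle $H_S$ on $S$ to the fiber $X_{t_0}$ via the isomorphism $X_{t_0} \cong S$. The core task is then to \emph{extend} this ample class on the single fiber $X_{t_0}$ to a global line bundle $H$ on $\mathcal{X}$ (perhaps after shrinking $\Delta$ around $t_0$). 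Once such an $H$ exists with $H|_{X_{t_0}}$ ample, the restriction $H|_{X_t}$ will be ample for all $t$ in a neighborhood of $t_0$ by openness of ampleness (Grauert's upper semicontinuity plus the ampleness criterion on K\"ahler fibers), in particular for all $t \in E$ sufficiently close to $t_0$; for the remaining, isolated-from-$t_0$ points of $E$ one repeats the argument or uses that $X_t \cong S$ there too.

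The key step — extending the line bundle — is where the K\"ahler hypothesis enters decisively. Since $\mathcal{X}$ carries a K\"ahler form, the fibers are K\"ahler, and the relevant obstruction to extending $c_1(H_S)$ from $X_{t_0}$ to a global class lies in the failure of the restriction map $H^2(\mathcal{X}, \mathbb{Z}) \to H^2(X_{t_0}, \mathbb{Z})$ to hit the class, together with the Hodge-theoretic obstruction of keeping the extended class of type $(1,1)$ on nearby fibers. Because $\Delta$ is contractible (Stein, and can be shrunk to a ball), $H^2(\mathcal{X}, \mathbb{Z}) \cong H^2(X_{t_0}, \mathbb{Z})$ by the deformation retract, so the topological class extends uniquely; the Hodge-theoretic condition ``the extended class stays in $H^{1,1}$'' defines a closed analytic subset of $\Delta$ (via variation of Hodge structure / the period map), and since $E$ accumulates at $t_0$ and each $X_t$ with $t\in E$ is isomorphic to $S$ — hence carries the class $c_1(H_S) \in H^{1,1}(X_t)$ — this closed subset contains $E$, has $t_0$ as an accumulation point, and therefore equals all of $\Delta$ (after shrinking). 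Thus the class is of type $(1,1)$ on every fiber; by the exponential sequence and again the vanishing/constancy of $H^2(\mathcal{O})$ along the family (Hodge numbers are constant on an isotrivial-on-$E$ family — more precisely one uses that $h^{2,0}$ is upper semicontinuous and constant on $E$), the class lifts to a genuine line bundle $H$ on $\mathcal{X}$. I would also double-check the relative Picard/relative N\'eron–Severi bookkeeping by invoking the moduli map $\mu_\phi : \Delta \to P_h$ discussed in Section 2: the set $E$ maps into a single point of the moduli space of polarized manifolds, and one can pull back a relatively ample polarization.

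Concretely, the sequence of steps I would carry out is: (1) reduce to a neighborhood of an accumulation point $t_0$ of $E$ and fix an isomorphism $X_{t_0}\cong S$, pulling back an ample $H_S$; (2) shrink $\Delta$ so that it is contractible and use $H^2(\mathcal{X},\mathbb{Z})\cong H^2(X_{t_0},\mathbb{Z})$ to get a canonical topological extension of $c_1(H_S)$; (3) show the locus of $t\in\Delta$ where the extended class lies in $H^{1,1}(X_t)$ is a closed analytic subset containing $E$, hence all of $\Delta$ by the identity principle since $E$ accumulates at $t_0$; (4) upgrade the $(1,1)$-class to a holomorphic line bundle $H$ on $\mathcal{X}$ via the exponential exact sequence, controlling $H^1(\mathcal{O}_{\mathcal{X}})$ and $H^2(\mathcal{O}_{\mathcal{X}})$ by base change and the Hodge-number constancy; (5) conclude by openness of ampleness that $H|_{X_t}$ is ample near $t_0$, hence for all $t\in E$ close to $t_0$, and then cover the rest of $E$ by repeating at its other accumulation points or noting $X_t\cong S$ there as well.

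The main obstacle I anticipate is step (3)–(4): ensuring that the extended cohomology class is genuinely algebraic/analytic on \emph{every} fiber, not merely on a dense subset, and that it descends to an honest line bundle rather than just a cohomology class — this is exactly where one must be careful that the period map argument applies and that there is no jumping of $h^{2,0}$ that could obstruct the lift. An alternative, possibly cleaner route for this step is to avoid Hodge theory and instead run the relative MMP / use the projectivity criterion Lemma \ref{projective criteria} once one knows (from the accumulation of isomorphic — in particular projective — fibers together with Proposition \ref{be}) that $\pi$ is a Moishezon family, and then extract a relatively ample bundle; I would present whichever of the two is shorter given the tools already assembled in the paper.
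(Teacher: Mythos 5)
Your overall route coincides with the paper's: the proof there also runs the exponential sequence on $\mathcal{X}$ and on the fibers, uses Ehresmann's theorem over the contractible base to identify $H^2(\mathcal{X},\mathbb{Z})\cong H^2(X_t,\mathbb{Z})$, uses the constancy of Hodge numbers in a smooth K\"ahler family to get the base-change isomorphism $R^2\pi_*\mathcal{O}_{\mathcal{X}}(t)\cong H^2(X_t,\mathcal{O}_{X_t})$ and the local freeness of $R^2\pi_*\mathcal{O}_{\mathcal{X}}$, and then kills the obstruction in $H^2(\mathcal{X},\mathcal{O}_{\mathcal{X}})\cong H^0(\Delta,R^2\pi_*\mathcal{O}_{\mathcal{X}})$ by the identity principle on the accumulating set $E$. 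So your steps (1)--(4) are the paper's diagram chase and step (5) is its Kodaira/openness conclusion.

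Two places where your write-up does not close. Minor: an accumulation point $t_0$ of $E$ need not belong to $E$ (take $E=\{1/n\}$), so you cannot ``assume $t_0\in E$ after relabeling''; base the construction at an arbitrary $t_1\in E$ instead, and note that no shrinking of $\Delta$ is needed (the disk is already contractible) --- shrinking would in fact lose the global $H$ that the statement demands. More substantively, in step (3) you justify that the extended class is of type $(1,1)$ on $X_t$ for $t\in E$ by saying $X_t\cong S$ ``carries the class $c_1(H_S)$''. But the class you must control is the Gauss--Manin parallel transport of $\phi_{t_1}^*c_1(H_S)$ to $X_t$, and the Ehresmann trivialization has no reason to intertwine the biholomorphisms $X_t\cong S$: a priori the transported class could sit outside $NS(X_t)$ even though $NS(X_t)\cong NS(S)$. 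This is exactly the Noether--Lefschetz-type jumping you yourself flag as the ``main obstacle'', and as written it is asserted rather than proved; the paper is also terse here and defers the details to \cite[Lemma 3.6]{RT21} and \cite[Lemma 4.25]{RT22}. Your proposed alternative via Proposition \ref{be} and Lemma \ref{projective criteria} is not available: Proposition \ref{be} requires uncountably many Moishezon fibers, whereas $E$ may be countable, and in the paper's architecture Lemma \ref{projectivity} is precisely what later produces the line bundle fed into Proposition \ref{be}, so that route would be circular. Finally, metric openness of ampleness only yields $H|_{X_t}$ ample for $t\in E$ near $t_1$; covering all of $E$ is handled in the paper at the point of application (Theorem \ref{4rigidity}) via Grothendieck's Zariski openness of ampleness, not by rerunning the construction at other accumulation points, which would produce different bundles.
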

\begin{proof}
Compare the proofs of \cite[Lemma 3.6]{RT21} and \cite[Lemma 4.25]{RT22}. For reader's convenience, we sketch a proof here.
 
By the exponential sequence and Ehresmann's theorem, one has the commutative diagram of exact sequences
\begin{equation}\label{lesa}
\xymatrix@C=0.5cm{
  \cdots \ar[r]^{}
  & H^1(\mathcal{X}, \mathcal{O}^{*}_{\mathcal{X}})\ar[d]_{} \ar[r]^{}
  & H^2(\mathcal{X}, \mathbb{Z}) \ar[d]_{} \ar[r]^{}\ar[d]^{\cong}
  & H^2(\mathcal{X}, \mathcal{O}_{\mathcal{X}})\ar[d]\ar[r]^{} & \cdots \\
   \cdots \ar[r]
  &H^{1}({X_{t}},\mathcal{O}_{X_{t}}^*) \ar[r]^{}
  & H^{2}(X_{t},\mathbb{Z}) \ar[r]
  & H^{2}(X_{t},\mathcal{O}_{{X_{t}}})\ar[r]&\cdots}
\end{equation}
for any $t\in \Delta$.
The Leray spectral sequence to $\pi$ for the sheaves $\cO_{{\cX}}$ gives rise to 
$$H^2({\cX},\cO_{{\cX}})\cong H^0(\Delta,R^2\pi_*\cO_{{\cX}})$$
since $\Delta$ is a holomorphic domain. Since $\pi$ is a smooth K\"ahler morphism, any pure-type Hodge number of $X_t$ is constant by Hodge decomposition and the upper semi-continuity of Hodge number. So the map
\begin{equation}\label{2bc}
R^2\pi_*\mathcal{O}_{\mathcal{X}}(t):=(R^2\pi_*\mathcal{O}_{\mathcal{X}})_t\otimes \mathbb{C}(t)\rightarrow  H^{2}(X_{t},\mathcal{O}_{{X_{t}}})    
\end{equation}
is bijective for each $t\in\Delta$ by Grauert's base change theorem, and $R^{2}\pi_{*}{\cO_{\cX}}$ is locally free over $\Delta$ by Grauert's continuity theorem. 

Take any point $t$ of $E$. And run the commutative Diagram \ref{lesa} and use the base change \eqref{2bc}, the local freeness of $R^{2}\pi_{*}{\cO_{\cX}}$ to obtain a global line bundle over the total space $\mathcal{X}$ such that its restriction to $X_t$ for any $t\in E$ is ample, since $E\subseteq\Delta$ is a subset with  accumulation points in $\Delta$ such that $X_{t}$ is isomorphic to a fixed projective manifold for any $t\in E$. 
\end{proof}

\begin{rema}
In general, we can not obtain a relative ample line bundle on total space of a smooth fiberwise projective family. For example, Hopf surface is an elliptic fibration, which is a smooth fiberwise projective family but not projective, over $\mathbb{P}^1$. Indeed, if it was a projective morphism, then Hopf surface would be projective. 
\end{rema}
Now, let us prove an equivalent form of Theorem \ref{4rigidity0}:
\begin{theo}\label{4rigidity}
Let $\pi:\mathcal{X}\rightarrow\Delta$ be a smooth K\"ahler family, and $S$ a projective manifold with the semiample canonical line bundle. Assume that there is a subset $E\subseteq\Delta$ with accumulation points in $\Delta$ such that $X_{t}\cong S$ for any $t\in E$. Then all fibers $X_{t}\cong S$ for $t\in\Delta$.
\end{theo}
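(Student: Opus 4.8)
The plan is to combine the projectivity input from Lemma~\ref{projectivity}, the global stability of semiampleness (Corollary~\ref{K+L}, Remark~\ref{semiample open}), and the rigidity machinery of \cite{LL24} through the moduli space $P_h$ of polarized manifolds with semiample canonical bundle. First I would apply Lemma~\ref{projectivity} to obtain a global line bundle $H$ on $\mathcal{X}$ with $H|_{X_t}$ ample for every $t\in E$. Since $E$ has an accumulation point in $\Delta$, the locus $\{t\in\Delta:\ H|_{X_t}\ \text{is ample}\}$ is a nonempty Zariski-open (in the countable Zariski topology) subset, hence $H|_{X_t}$ is ample for all $t$ outside an at most countable union of proper analytic subsets; after shrinking I may assume $H$ is relatively ample over a (still dense) Zariski-open $U\subset\Delta$ with $E\subset U$. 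In particular $\pi$ is projective over $U$, and the Hilbert polynomials $h_t(m):=\chi(X_t,H_t^{\otimes m})$ are all equal to the fixed polynomial $h$ of $S$ for $t\in E$, hence locally constant, hence equal to $h$ on $U$.

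Next I would upgrade the semiampleness hypothesis: since $X_{t_0}\cong S$ for some $t_0\in E$ and $K_S$ is semiample, $K_{X_{t_0}}$ is semiample, so by Remark~\ref{semiample open} (the $L=\mathcal{O}_\mathcal{X}$ case of Corollary~\ref{K+L}) $K_{X_t}$ is semiample for every $t\in\Delta$, and $K_{\mathcal{X}}$ is $\pi$-semiample near each fiber. Thus for $t\in U$ the pair $(X_t,H_t)$ is a polarized manifold with semiample canonical bundle and fixed Hilbert polynomial $h$, so the family over $U$ defines a classifying morphism $\mu_\phi:U\to P_h$ to Viehweg's coarse moduli algebraic space. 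By construction $\mu_\phi$ is constant on $E$ (all these fibers are $S$ with the fixed polarization type up to numerical equivalence), and $E$ has an accumulation point in $U$; since $\mu_\phi$ is a morphism of (reduced, separated) analytic/algebraic objects and $U$ is connected, the identity principle forces $\mu_\phi$ to be constant on all of $U$. Therefore every $X_t$, $t\in U$, is isomorphic to $S$ up to the equivalence defining $P_h$; invoking that $P_h$ is the quotient of the honest moduli of polarized manifolds by a \emph{compact} (in particular finite-fibered) equivalence relation, together with semiampleness-based rigidity of the polarization as in \cite{LL24}, one concludes $X_t\cong S$ for all $t\in U$.

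It remains to spread this from the dense subset $U$ to the whole disk $\Delta$. Here I would run the limiting argument of \cite{LL24}: pick any $t_\infty\in\Delta\setminus U$ and a sequence $t_\nu\to t_\infty$ with $t_\nu\in U$, so $X_{t_\nu}\cong S$. Using that $K_{X_{t_\infty}}$ is already known to be semiample (hence $X_{t_\infty}$ has a good minimal model, indeed is its own canonical model) and that the Hilbert polynomials with respect to $H$ are constant near $t_\infty$ by upper-semicontinuity plus Riemann--Roch (the higher cohomology being controlled via Kawamata--Viehweg vanishing on the fibers, which are minimal with semiample $K$), one extends the classifying map $\mu_\phi$ continuously across $t_\infty$; its value there is the limit of the constant value $[S]$, so $X_{t_\infty}\cong S$ as well. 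Since $\Delta$ is connected this gives $X_t\cong S$ for every $t\in\Delta$, and $\mathcal{S}=\Delta$, which is Theorem~\ref{4rigidity} and hence Theorem~\ref{4rigidity0}.

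The main obstacle I expect is the passage from "classifying map constant on $U$" to "$X_t\cong S$" and then its extension over the bad locus $\Delta\setminus U$: the coarse moduli space $P_h$ only sees the fiber up to the compact equivalence relation and up to numerical equivalence of polarizations, so genuine isomorphism of the $X_t$ requires the rigidity analysis of \cite{LL24} (boundedness of the automorphism-twisted polarizations, or an Aut-action argument), and spreading across $\Delta\setminus U$ requires knowing that the relative canonical model behaves well in the limit — precisely the point where semiampleness of $K_{X_{t_\infty}}$ (guaranteed by Remark~\ref{semiample open}) and the cohomology-and-base-change input (Propositions~\ref{Ri-lf} and \ref{cbc}) must be combined carefully.
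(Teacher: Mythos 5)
Your first two steps (Lemma~\ref{projectivity} plus Zariski openness of ampleness to get a dense Zariski-open $U\supseteq E$ over which $\pi$ is projective, then the classifying map to Viehweg's coarse moduli space $P_h$ and the identity principle on the connected base) do match the paper's argument. But there is a genuine gap at exactly the point you flag as the ``main obstacle'': the passage from $U$ to all of $\Delta$. You apply Remark~\ref{semiample open} to conclude that $K_{X_t}$ is semiample for \emph{every} $t\in\Delta$, but that remark (Corollary~\ref{K+L} with $L=\mathcal{O}_{\mathcal{X}}$) requires a smooth \emph{projective} family over the disk, and at that stage you only know $\pi$ is projective over $U$; nothing has been established about the fibers over $\Delta\setminus U$. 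Likewise, your ``continuous extension of $\mu_\phi$ across $t_\infty\in\Delta\setminus U$'' presupposes a polarization, hence projectivity, of the family near $t_\infty$ --- which is precisely what is missing. As written, the limiting argument in your third paragraph is circular.

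The paper closes this gap with two tools you do not invoke: since uncountably many fibers are projective, the bimeromorphic embedding (Proposition~\ref{be}) makes $\pi$ a Moishezon family, and then the relative projectivity criterion for smooth K\"ahler Moishezon morphisms (Lemma~\ref{projective criteria}, after Claudon--H\"oring) yields honest projectivity of $\pi$ over full open neighborhoods $U_{r,\epsilon_r}$ of the Stein compacts $\{|t|\le r\}$, which exhaust $\Delta$ as $r\to 1$. Over each such $U_{r,\epsilon_r}$ (chosen large enough to contain an accumulation point of $E$), Remark~\ref{semiample open} and the moduli map legitimately apply, $\mu$ is constant by separatedness of $P_h$ together with the identity principle, and there is no residual ``bad locus'' to cross. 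Incidentally, your worry that constancy of $\mu$ only gives isomorphism ``up to the compact equivalence relation'' is unnecessary: two polarized fibers map to the same point of the coarse space $P_h$ precisely when there is an isomorphism of the underlying manifolds carrying one polarization to a numerically equivalent one, so $\mu(t)=\mu(t_1)$ with $t_1\in E$ already yields $X_t\cong S$; no further rigidity analysis from \cite{LL24} is needed at that step.
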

\begin{proof}
By Lemma \ref{projectivity}, one obtains a global line bundle $H$ over the total space $\mathcal{X}$ such that $H|_{X_t}$ for any $t\in E$ is ample. By Grothendieck's Zariski openness of the ampleness of the line bundle \cite{Gr61}, there is a dense Zariski open subset $U\subset \Delta$ such that $H_{X_{t}}$ is ample for any $t\in U$ and $E\subset U$. On this, we can also argue as \cite[Lemma 4.25, Remark 4.26]{RT22}. 
So $\pi$ is a Moishezon family by the bimeromorphic embedding Proposition \ref{be}. Just as the proof of Corollary \ref{min-general}, we can choose the Stein compact subset $$W_r:=\{t\in \mathbb{C}\ |\ |t|\leq r\}$$ with $0<r<1$, and there is an open neighborhood 
$$U_{r,\epsilon_r}:=\{t\in \mathbb{C}\ |\ |t|< r+\epsilon_r\}$$ of $W_r$ for small $\epsilon_r$ such that $\pi|_{\pi^{-1}(U_{r,\epsilon_r})}$ is projective by Lemma \ref{projective criteria}. Then, take  $r$ large enough such that $U_{r,\epsilon_r}$ contains a accumulation point of $E$ in $\Delta$.


Since $K_{S}$ is semiample, $K_{X_t}$ is semiample for any $t\in U_{r,\epsilon_r}$ by Remark \ref{semiample open}. So the family $$(\pi_{U_{r,\epsilon_r}}:\pi^{-1}(U_{r,\epsilon_r})\rightarrow U_{r,\epsilon_r}, H_{U_{r,\epsilon_r}})$$ induces a morphism
$$\mu:=\mu_{\pi_{U_{r,\epsilon_r}}}:U_{r,\epsilon_r}\rightarrow {P}_h$$
to the coarse moduli space ${P}_h$ of polarized manifolds,
where $h(m):=\chi(H_{t}^{\otimes m})$ for any $m\in \mathbb{Z}$ and $t\in U_{r,\epsilon_r}$.
For any $t_1, t_2\in E$, ${H}_{t_1}$ and ${H}_{t_2}$  are numerically equivalent, so $\mu(t_1)=\mu(t_2)=o$, where $o$ is a point in ${P}_h$.

Since the coarse moduli space ${P}_h$ is separated and $\mu$ is holomorphic, $\mu^{-1}(o)$ is a closed analytic subset of $U_{r,\epsilon_r}$ which contains a subset of $U_{r,\epsilon_r}$ with accumulation points in $U_{r,\epsilon_r}$. Thus, $$\mu^{-1}(o)=U_{r,\epsilon_r},$$ which means that the morphism $\mu$ is constant. Therefore, $X_t\cong S$ for all $t\in U_{r,\epsilon_r}$. Hence, take  $r\rightarrow 1$ for $U_{r,\epsilon_r}$ and apply the same moduli argument above to conclude the proof, i.e., $X_t\cong S$ for all $t\in \Delta$. 
\end{proof}
As a direct result of Theorem \ref{4rigidity}, we have:
\begin{coro}
[{\cite[Theorem 1.2]{LL24}}]{\label{rigidity}}
Let $\pi:\mathcal{X}\rightarrow\Delta$ be a smooth K\"ahler family, and $S$ a projective manifold with the semiample canonical line bundle. If all the fibers $X_{t}$ with $t\neq 0$ are biholomorphic to $S$, then the central fiber $X_0$ is biholomorphic to $S$. 
\end{coro}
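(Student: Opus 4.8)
The plan is to obtain Corollary~\ref{rigidity} as an immediate specialization of Theorem~\ref{4rigidity}; the only thing to check is that the hypotheses of that theorem are met here.

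First I would take $E := \Delta \setminus \{0\}$. Since $\Delta$ is the unit disk in $\mathbb{C}$, deleting a single point neither disconnects it nor isolates any of its points, so every point of $\Delta$ --- in particular the origin $0$ --- is an accumulation point of $E$ lying in $\Delta$. By hypothesis $X_t \cong S$ for all $t \neq 0$, i.e.\ for all $t \in E$, and $S$ is a projective manifold with semiample canonical line bundle $K_S$. Hence the smooth K\"ahler family $\pi:\mathcal{X}\to\Delta$ together with the subset $E$ satisfies all the assumptions of Theorem~\ref{4rigidity}.

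Applying Theorem~\ref{4rigidity} then gives $X_t \cong S$ for every $t \in \Delta$; in particular $X_0 \cong S$, which is exactly the claim. I do not expect any real obstacle here: the substantive work --- producing via Lemma~\ref{projectivity} a global line bundle on $\mathcal{X}$ that is ample on the fibers over $E$, upgrading $\pi$ to a Moishezon (hence, over Stein compacts, projective) family through Proposition~\ref{be} and Lemma~\ref{projective criteria}, propagating semiampleness of the canonical bundle by Remark~\ref{semiample open}, and running the coarse-moduli argument over $P_h$ to force the classifying map to be constant --- has already been carried out in the proof of Theorem~\ref{4rigidity}. Thus the only (trivial) point specific to Corollary~\ref{rigidity} is the observation that the punctured disk is an admissible choice of $E$, and the corollary is genuinely a direct consequence.
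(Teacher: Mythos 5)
Your proposal is correct: taking $E=\Delta\setminus\{0\}$, which has accumulation points in $\Delta$ and consists entirely of fibers isomorphic to $S$, Theorem~\ref{4rigidity} applies verbatim and yields $X_0\cong S$. This is exactly the specialization the paper announces in the lead-in (``As a direct result of Theorem~\ref{4rigidity}, we have:\ldots''), so the logical content is the same. The proof the paper actually writes out, however, takes a slightly different path: instead of invoking Lemma~\ref{projectivity} on the punctured disk to get a line bundle ample over $E$, it imports the Hodge-theoretic argument of \cite[Theorem 1.2]{LL24} to produce a global line bundle $\mathcal{A}$ on $\mathcal{X}$ whose restriction to the \emph{central} fiber $X_0$ is ample, then uses Zariski openness of ampleness to make the family projective over a small disk, and finishes with the global stability of semiampleness (Remark~\ref{semiample open}) and the coarse-moduli argument from the proof of Theorem~\ref{4rigidity}. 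The paper's route is self-contained modulo \cite{LL24} and illustrates that one only needs ampleness at the limit fiber; your route is shorter and cleaner because it reuses the full strength of Theorem~\ref{4rigidity}, which has already been established. Both are valid, and nothing is missing from your argument.
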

\begin{proof} 
We sketch a new proof by combining those of \cite[Theorem 1.2]{LL24} and Theorem \ref{4rigidity}. By a Hodge theoretic argument in \cite[Theorem 1.2]{LL24}, one obtains a global line bundle $\mathcal{A}$ over $\mathcal{X}$ such that $c_1(\mathcal{A}|_{X_0})$ is an ample class on $X_0$ and thus the restriction $\mathcal{A}|_{X_0}$ is ample. So the Zariski openness of the ampleness of the line bundle reduces our proof to the claim: if all the fibers $X_{t}$ with $t\neq 0$ in a smooth projective family over a (small) disk are biholomorphic to a projective manifold $S$ with the semiample canonical line bundle, then $X_0$ is biholomorphic to $S$, which directly follows from the global stability of semiampleness of the canonical line bundles in Remark \ref{semiample open} and the moduli argument in the proof of Theorem \ref{4rigidity}. 
\end{proof}

Finally, let us come to the proof of Theorem \ref{1.7}. We need E. Viehweg--K. Zuo's result \cite[Theorem 0.1]{vz} on the minimal number of singular fibers in a family of varieties: 
\begin{theo}[{\cite[Theorem 0.1]{vz}}]\label{vz}
Let $Y$ be a non-singular curve and $X$ a projective manifold. 
Let $f: X \to Y$ be a surjective morphism with connected general fiber $F$. 
Fix a reduced divisor $D$ on $Y$ containing the discriminant divisor of $f$, and set 
\[
   f_0 = f|_{X_0} : X_0 \longrightarrow Y_0,
\]
where $Y_0 = Y \setminus D$ and $X_0 = f^{-1}(Y_0)$. Then $f_0$ is smooth.  Assume that $f$ is not birationally isotrivial, and that one of the following conditions holds:
\begin{enumerate}
   \item[$\mathrm{a)}$] $\kappa(F) = \dim(F)$.
   \item[$\mathrm{b)}$] $F$ has a minimal model $F'$ with $K_{F'}$ semiample.
\end{enumerate}
Then $f$ has at least
\begin{enumerate}
   \item[$\mathrm{i)}$] three singular fibers if $Y = \mathbb{P}^1$.
   \item[$\mathrm{ii)}$] one singular fiber if $Y$ is an elliptic curve.
\end{enumerate}
Recall that $f:{X}\rightarrow Y$ is called \emph{birationally
isotrivial}, if ${X}\times_Y \text{Spec}\ \overline{\mathbb{C}(Y)}$ is birational to $F\times \text{Spec}\ \overline{\mathbb{C}(Y)}$.
\end{theo}
We first prove an equivalent form of Corollary \ref{1.6}:
\begin{coro}\label{1.6'}
Let $\pi: \mathcal{X}\rightarrow Y$ be a smooth K\"ahler family over $Y$, where $Y$ is isomorphic to $\mathbb{P}^1$ or an elliptic curve. Let $S$ be a projective manifold with the big and nef canonical line bundle. If there exists one fiber isomorphic to $S$, then 
$\mathcal{S}:=\{t\in Y: X_t\cong S\}$
is the whole $Y$.
\end{coro}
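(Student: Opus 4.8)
The plan is to promote the single isomorphism $X_{t_{0}}\cong S$ (for the given $t_{0}\in Y$ with $X_{t_{0}}\cong S$) to the statement that \emph{every} fiber of $\pi$ is a minimal manifold of general type, then to use the Viehweg--Zuo bound on the number of singular fibers to force $\pi$ to be birationally isotrivial, and finally to invoke the first author--Liu isotriviality result to conclude that all fibers are isomorphic to $S$. For the first step: since $K_{S}$ is big and nef it is good, hence semiample by Kawamata \cite[Theorem 6.1]{Ka85}, so $S$ is a minimal manifold of general type. Picking a small disk $\Delta_{0}\subset Y$ around $t_{0}$ and applying Corollary~\ref{min-general} to $\pi|_{\pi^{-1}(\Delta_{0})}$, every fiber over $\Delta_{0}$ is minimal of general type, so uncountably many fibers of $\pi$ have big canonical bundle; covering $Y$ by a finite chain of overlapping disks and using, disk by disk, the bigness extension \cite[Corollary 4.3]{RT21} together with the connectedness of $Y$, one gets that $K_{X_{t}}$ is big for every $t\in Y$. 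Likewise the nef locus $\{t\in Y : K_{X_{t}}\text{ is nef}\}$ is open by Theorem~\ref{pair nef deformation} (used on disks), closed by Remark~\ref{closed}, and contains $t_{0}$, hence is all of $Y$; so every $K_{X_{t}}$ is nef, good, and thus semiample. Consequently $K_{\mathcal{X}}$ is $\pi$-big and $\pi$ is a fiberwise-Moishezon K\"ahler family, so arguing as in the proof of Corollary~\ref{min-general} (via Proposition~\ref{be}, Lemma~\ref{projective criteria} on disks, and Moishezon's theorem \cite{Mo66} applied to the compact K\"ahler Moishezon manifold $\mathcal{X}$) one obtains that $\mathcal{X}$ is projective and $\pi:\mathcal{X}\to Y$ is a smooth projective morphism whose general fiber $F$ is a minimal manifold of general type with $K_{F}$ semiample.

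Now apply the Viehweg--Zuo Theorem~\ref{vz} to $f=\pi:\mathcal{X}\to Y$: here $\mathcal{X}$ is a projective manifold, $\pi$ has connected general fiber $F$, the discriminant of $\pi$ is empty, and condition (b) holds since $F$ is its own minimal model with $K_{F}$ semiample (equivalently condition (a), $\kappa(F)=\dim F$). If $\pi$ were not birationally isotrivial, Theorem~\ref{vz} would force at least three singular fibers when $Y\cong\mathbb{P}^{1}$ and at least one when $Y$ is an elliptic curve; but $\pi$ is smooth and hence has no singular fibers. Therefore $\pi$ is birationally isotrivial. Since $\pi$ is moreover a smooth family of manifolds with semiample canonical line bundle, the first author--Liu isotriviality \cite[Theorem 7.1]{LL24} shows that $\pi$ is isotrivial, i.e. all fibers $X_{t}$ are mutually isomorphic. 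As $X_{t_{0}}\cong S$, this gives $X_{t}\cong S$ for every $t\in Y$, that is, $\mathcal{S}=Y$.

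The main obstacle is the first step rather than the last two: Corollary~\ref{min-general} and the bimeromorphic embedding Proposition~\ref{be} are stated over the disk, so the reduction to a single \emph{global} smooth projective family over the compact base $Y$ has to be carried out by an overlapping-disk argument plus Moishezon's theorem, and one must check that its general fiber satisfies exactly the hypotheses of Theorem~\ref{vz} and of \cite[Theorem 7.1]{LL24}; the Viehweg--Zuo and isotriviality inputs are then formal invocations. Alternatively, as noted for Theorem~\ref{1.7}, once $\pi$ is a non-isotrivial smooth projective family of general-type manifolds over $Y$ one may instead derive a contradiction from Corollary~\ref{1.5} together with the Brody hyperbolicity of the base of such a family \cite[Theorem A]{d22}, \cite[Theorem B]{DLSZ24}, since $\mathbb{P}^{1}$ and elliptic curves admit non-constant maps from $\mathbb{C}$.
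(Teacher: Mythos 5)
Your overall strategy coincides with the paper's: propagate ``minimal of general type'' to all fibers via Corollary \ref{min-general}, establish that $\pi$ is a global smooth projective family, and then combine Viehweg--Zuo's Theorem \ref{vz} with the isotriviality theorem \cite[Theorem 7.1]{LL24}. The last two steps are formal invocations and are fine as you present them.

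The one place where your argument has a genuine gap is the globalization of projectivity. Proposition \ref{be} and Lemma \ref{projective criteria} are statements over disks: they give that $\pi$ restricted over each member of an open cover of $Y$ is a Moishezon, respectively projective, morphism. From this you assert that $\mathcal{X}$ is a ``compact K\"ahler Moishezon manifold'' and then apply Moishezon's theorem. But local projectivity of $\pi$ over a cover of the base does not formally imply that the compact total space is Moishezon, nor that the morphism is globally projective; to pass from the fiberwise (or disk-local) statement to a global one you need either a big line bundle on $\mathcal{X}$ (e.g.\ produced from positivity of the direct images $\pi_*K_{\mathcal{X}/Y}^{\otimes k}$ twisted by an ample bundle pulled back from the curve $Y$) or a genuinely global projectivity criterion, and you supply neither. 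The paper sidesteps exactly this point by invoking Claudon--H\"oring's criterion \cite[1.1 Theorem]{CH24} for the fibration $\pi:\mathcal{X}\to Y$ with fibers of general type, which yields at once that $\pi$ is projective and hence, $Y$ being projective, that $\mathcal{X}$ is projective. If you replace your disk-patching step by that citation, your proof becomes the paper's proof. Your closing alternative via Corollary \ref{1.5} together with the hyperbolicity of the relevant moduli bases \cite[Theorem A]{d22}, \cite[Theorem B]{DLSZ24} is also legitimate and is precisely the alternative route the paper records for Theorem \ref{4.12}.
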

\begin{proof}
By Corollary \ref{min-general}, all fibers of $\pi$ are minimal and of general type if so is one fiber of $\pi$. 
Then Claudon--H\"oring's criterion \cite[1.1 Theorem]{CH24} of projective morphisms shows that $\pi$ is projective. So the total space $\mathcal{X}$ of the family $\pi$ is projective. Theorem \ref{vz} implies that the smooth family $\pi$ is birationally isotrivial. Hence, \cite[Theorem 7.1]{LL24} tells us that $\pi$ is actually isotrivial since all fibers of $\pi$ are good minimal. This concludes the proof. 
\end{proof}
Upon closer examination, we are able to establish a more general rigidity theorem.
\begin{theo}\label{4.12}
Let $\pi: \mathcal{X}\rightarrow Y$ be a smooth projective family over $Y$, where $Y$ is isomorphic to $\mathbb{P}^1$ or an elliptic curve. Let $S$ be a projective manifold with the semiample canonical line bundle. Then  $\mathcal{S}$ 
is either empty or the whole $Y$. 
\end{theo}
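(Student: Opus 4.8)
The plan is to reduce to the situation of Corollary~\ref{1.6'} by first forcing the family to be projective and then invoking the Viehweg--Zuo bound on the number of singular fibers together with the first author--Liu isotriviality result. Suppose $\mathcal{S}\neq\emptyset$, so there is a fiber $X_{t_0}\cong S$. Since $K_S$ is semiample, $X_{t_0}$ has a good minimal model (namely itself, after the obvious birational modification is trivial here) and in particular $\kappa(F)=\kappa(S)=\kappa(K_S)$ is well-defined; more importantly the generic fiber has a minimal model with semiample canonical bundle, which is exactly hypothesis (b) of Theorem~\ref{vz}. The base $Y$ is $\mathbb{P}^1$ or an elliptic curve, and $\pi$ is already assumed projective here (unlike in Corollary~\ref{1.6'}, where one must first run Claudon--H\"oring's criterion to upgrade a K\"ahler family to a projective one), so the total space $\mathcal{X}$ is projective and $\pi$ has no singular fibers at all, being smooth over all of $Y$.

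Next I would apply Theorem~\ref{vz}: if $\pi$ were not birationally isotrivial, then since condition (b) holds (the general fiber, being isomorphic to $S$ over a dense open set by the rigidity argument, or at least having a good minimal model), $\pi$ would be forced to have at least three singular fibers when $Y=\mathbb{P}^1$ and at least one when $Y$ is elliptic. This contradicts smoothness of $\pi$ over the entire base $Y$. Hence $\pi$ is birationally isotrivial. The slight subtlety is that to quote condition (b) of Theorem~\ref{vz} cleanly one wants the \emph{general} fiber $F$ to have a minimal model with semiample canonical bundle, not just the special fiber $X_{t_0}$; this is where one uses that the locus $\mathcal{S}$ of fibers isomorphic to $S$ is, by Theorem~\ref{4rigidity0} applied on small disks around $t_0$, either discrete or everything --- combined with the fact that a family over a curve with $\mathcal{S}$ infinite must have $\mathcal{S}=Y$. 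Alternatively, and perhaps more directly, one observes that birational isotriviality only needs the geometric generic fiber, and $X_{t_0}$ being a smooth fiber already constrains it enough; but the cleanest route is to first establish $\mathcal{S}$ is infinite.

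To get from ``$\mathcal{S}\ne\emptyset$'' to ``$\mathcal{S}$ infinite'' I would argue locally: around $t_0$, take a small disk $\Delta\hookrightarrow Y$; by Theorem~\ref{4rigidity0} the set $\mathcal{S}\cap\Delta$ is either discrete in $\Delta$ or all of $\Delta$. If for \emph{every} such disk around every point of $\mathcal{S}$ the locus is discrete, then $\mathcal{S}$ is a discrete --- hence finite, since $Y$ is compact --- subset of $Y$; but a finite nonempty $\mathcal{S}$ would still give infinitely many fibers isomorphic to $S$ only if... no, it would not. So one must instead push harder: a finite $\mathcal{S}$ is compatible with $\pi$ not being isotrivial, so the real work is to rule out $\mathcal{S}$ finite nonempty. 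Here I would invoke that $\pi$ is smooth and projective over the compact curve $Y$ with one fiber having semiample (in particular $\kappa\ge 0$) canonical bundle: by Corollary~\ref{K+L} (or rather its global-stability consequence, Remark~\ref{semiample open}), semiampleness of $K_{X_{t_0}}$ propagates, so \emph{all} fibers $X_t$ have semiample canonical bundle, hence condition (b) of Theorem~\ref{vz} holds for the general fiber. Now Theorem~\ref{vz} forces birational isotriviality (no singular fibers), and then \cite[Theorem 7.1]{LL24} --- applicable since all fibers are good minimal models --- upgrades this to genuine isotriviality, i.e., $X_t\cong X_{t'}$ for all $t,t'\in Y$. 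In particular $X_t\cong X_{t_0}\cong S$ for every $t$, so $\mathcal{S}=Y$.

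The main obstacle I anticipate is the propagation step: ensuring that ``one fiber with semiample $K$'' genuinely upgrades to ``general fiber has a good minimal model'' in the precise form demanded by Theorem~\ref{vz}(b). Over a disk this is exactly Corollary~\ref{K+L}/Remark~\ref{semiample open}, but one must be careful that the Zariski neighborhood on which $K_{\mathcal{X}}$ is relatively semiample, produced fiber by fiber, can be assembled into a statement about the generic fiber of $\pi$ over the whole curve $Y$ --- a gluing of Zariski-open conditions, which is unproblematic but should be stated. Everything downstream (Theorem~\ref{vz}, the compactness of $Y$ ruling out singular fibers, and \cite[Theorem 7.1]{LL24}) is then a black-box application. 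The excerpt itself notes the alternative derivation from Corollary~\ref{1.5} together with \cite[Theorem A]{d22} or \cite[Theorem B]{DLSZ24}, which I would mention as a remark but not pursue as the primary argument.
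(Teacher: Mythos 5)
Your proposal is correct and follows essentially the same route as the paper's own proof: propagate semiampleness of the canonical bundle from the one fiber $X_{t_0}\cong S$ to all fibers via Remark~\ref{semiample open}, verify hypothesis (b) of Theorem~\ref{vz}, conclude birational isotriviality from the absence of singular fibers over the compact base, and upgrade to isotriviality via \cite[Theorem 7.1]{LL24}. The gluing point you flag (passing from disk-local stability of semiampleness to the whole curve) is handled exactly as you suggest, by connectedness of $Y$.
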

\begin{proof}
As mentioned after Theorem \ref{1.7}, this theorem can be deduced from Corollary \ref{1.5} and \cite[Theorem A]{d22} or \cite[Theorem B]{DLSZ24}. Recall that \cite[Theorem A]{d22} proves the Brody hyperbolicity of coarse moduli spaces for polarized manifolds with semiample canonical sheaves. More precisely, consider the moduli functor $\mathcal{P}_h$ of
polarized manifolds with semiample canonical sheaves  introduced by Viehweg \cite[$\S$\ 7.6]{Vi91} with the Hilbert polynomial $h$ associated to the polarization $\mathcal{H}$, and then for some
quasi-projective manifold $V$, there exists a smooth family $$(f:U\rightarrow V, \mathcal{H})\in \mathcal{P}_h(V)$$ 
for which the induced moduli map $V\rightarrow P_h$ to the quasi-projective coarse moduli scheme for $\mathcal{P}_h$ is quasi-finite over its image. Then the base space $V$ is Brody hyperbolic. However, neither of the two bases of the family in Theorem \ref{4.12} is Brody hyperbolic.

Actually, we can also prove Theorem \ref{4.12} as follows. Notice that in the proof of Corollary \ref{1.6'},  the bigness of $K_S$ is mainly used to guarantee the projectivity of $\pi$ and Remark \ref{semiample open} gives the (global) stability of semiampleness, which is either one condition of Viehweg--Zuo's birational isotriviality Theorem \ref{vz} of a smooth projective family. Hence, the first author--Liu's isotriviality theorem \cite[Theorem 7.1]{LL24} concludes the proof. 
\end{proof}


\begin{thebibliography}{1000}
\bibitem[AP97]{AP97} M. Andreatta, T. Peternell, \textit{On the limits of manifolds with nef canonical bundles}, Complex analysis and geometry (Trento, 1995), 1-6, Pitman Res. Notes Math. Ser., 366, Longman, Harlow, 1997.

\bibitem[BPV84]{BPV84} W. Barth, C. Peters, A. Van de Ven, \textit{Compact complex surfaces},  Ergebnisse der Mathematik und ihrer Grenzgebiete (3) [Results in Mathematics and Related Areas (3)], 4. Springer-Verlag, Berlin, 1984.

\bibitem[Br06]{Br06} M. Brunella, \textit{A positivity property for foliations on compact K\"ahler manifolds}, Internat. J. Math. 17 (2006), no. 1, 35-43.

\bibitem[Ca91]{Ca91} F. Campana, \textit{The class $\mathcal{C}$ is not stable by small deformations}, Math. Ann. 290 (1991), no. 1, 19-30.

\bibitem[CHP16]{CHP16} F. Campana, A. H\"oring, T. Peternell, \textit{Abundance for K\"ahler threefolds}, Ann. Sci. \'Ec. Norm. Sup\'er. (4) 49 (2016), no. 4, 971-1025.

\bibitem[CHP23]{CHP23} F. Campana, A. H\"oring, T. Peternell, \textit{Erratum and addendum to the paper: Abundance for K\"ahler threefolds}, 
\href{http://arxiv.org/abs/arXiv:2304.10161}{arXiv:2304.10161}.

\bibitem[CP99]{CP99} F. Campana, T. Peternell, \textit{Recent developments in the classification theory of compact K\"ahler manifolds}, Several complex variables (Berkeley, CA, 1995–1996), 113–159.
Math. Sci. Res. Inst. Publ., 37
Cambridge University Press, Cambridge, 1999. 

\bibitem[CH20]{CH20} Junyan Cao, A. H\"oring, \textit{Rational curves on compact K\"ahler manifolds}, J. Differential Geom. 114 (2020), no. 1, 1-39.

\bibitem[CP25]{CP25} Junyan Cao, M. P$\breve{\textrm{a}}$un, \textit{Remarks on relative canonical bundles and algebraicity criteria for foliations in K\"ahler context},  \href{http://arxiv.org/abs/arXiv:2502.02183v2}{arXiv:2502.02183v2}.

\bibitem[CH24]{CH24} B. Claudon, A. H\"oring, \textit{Projectivity criteria for K\"ahler morphisms},  
\href{http://arxiv.org/abs/arXiv:2404.13927}{arXiv:2404.13927}.

\bibitem[DH25]{DH24a} O. Das, C. Hacon, \textit{The log minimal model program for K\"ahler 3-folds}, J. Differential Geom. 130 (2025), no. 1, 151–207.

\bibitem[DH24]{DH24b} O. Das, C. Hacon, \textit{On the minimal model program for K\"ahler 3-folds}, 
\href{http://arxiv.org/abs/arXiv:2306.11708}{arXiv:2306.11708}.

\bibitem[DHP24]{DHP24}  O. Das, C. Hacon, M. P$\breve{\textrm{a}}$un,  \textit{On the 4-dimensional minimal model program for K\"ahler varieties}, Adv. Math. 443 (2024), Paper No. 109615, 68 pp.

\bibitem[DHY25]{DHY25} O. Das, C. Hacon, J. I. Y\'a\~nez, \textit{MMP for generalized pairs on K\"ahler 3-folds}, 
\href{http://arxiv.org/abs/arXiv:2305.00524}{arXiv:2305.00524}.

\bibitem[DO24]{DO24}  O. Das, Wenhao  Ou, \textit{On the log abundance for compact K\"ahler threefolds}, Manuscripta Math. 173 (2024), no. 1-2, 341-404.

\bibitem[DO23]{DO23}  O. Das, Wenhao Ou, \textit{On the log abundance for compact K\"ahler threefolds II}, 
\href{http://arxiv.org/abs/arXiv:2306.00671}{arXiv:2306.00671}.


\bibitem[FH11]{FH11} T. de Fernex,  C. Hacon, \textit{Deformations of canonical pairs and Fano varieties}, J. Reine Angew. Math. 651 (2011), 97-126.

\bibitem[Dn22]{d22} Ya Deng, \textit{On the hyperbolicity of base spaces for maximally variational families of smooth projective varieties}, 
With an appendix by Dan Abramovich.
J. Eur. Math. Soc. (JEMS) 24 (2022), no. 7, 2315–2359.

\bibitem[DLSZ24]{DLSZ24} Ya Deng, Steven  Lu, Ruiran Sun, Kang Zuo, \textit{Picard theorems for moduli spaces of polarized varieties}, Math. Ann. 390 (2024), no. 1, 1125-1154.


\bibitem[Fu81]{Fu81} A. Fujiki, \textit{Deformation of uniruled manifolds}, Publ. Res. Inst. Math. Sci. 17 (1981), no. 2, 687-702.

\bibitem[Fu11]{Fu11} O. Fujino, \textit{On Kawamata's theorem}, 
EMS Ser. Congr. Rep.
European Mathematical Society (EMS), Z\"urich, 2011, 305–315.

\bibitem[Fu22]{Fu22} O. Fujino, \textit{Minimal model program for projective morphisms between complex analytic spaces}, \href{https://arxiv.org/abs/2201.11315}{arXiv:2201.11315}.

\bibitem[Gr61]{Gr61} A. Grothendieck, \textit{Techniques de construction en g\'eom\'etrie analytique.  VIII. Rapport sur les th\'eor\`emes de finitude de Grauert et Remmert}, S\'eminaire Henri Cartan, Volume 13 (1960-1961) no. 2, Talk no. 15, p.1-10.

\bibitem[HL06]{HL06} Ning Hao, Li Li, 
\textit{Higher cohomology of the pluricanonical bundle is not deformation invariant}, 
\href{https://arxiv.org/abs/math/0612006v2}{arXiv:0612006v2}.

\bibitem[Ha77]{Ht} R. Hartshorne,
\textit{Algebraic geometry}, Graduate Texts in Mathematics, vol. 52, Springer, 1977.

\bibitem[HP15]{HP15} A. H\"oring, T. Peternell, \textit{Mori fibre spaces for K\"ahler threefolds},  J. Math. Sci. Univ. Tokyo 22 (2015), no. 1, 219-246.

\bibitem[HP16]{HP16} A. H\"oring, T. Peternell, \textit{Minimal models for K\"ahler threefolds}, Invent. Math. 203 (2016), no. 1, 217-264.

\bibitem[HP24]{HP24} A. H\"oring, T. Peternell, \textit{A contraction theorem for divisors fibering over a curve}, 
\href{http://arxiv.org/abs/arXiv:2409.15993}{arXiv:2409.15993}.

\bibitem[In]{in} P. Ionescu, \textit{ Generalized adjunction and applications}, Math. Proc. Cambridge Philos. Soc. 99 (1986), no. 3, 457-472.

\bibitem[Ka85]{Ka85} Y. Kawamata, \textit{Pluricanonical systems on minimal algebraic varieties}, Invent. Math. 79 (1985), no. 3, 567-588.

\bibitem[KMM87]{KMM87} Y. Kawamata, K. Matsuda, K. Matsuki, \textit{Introduction to the minimal model problem}, Algebraic geometry, Sendai, 1985, 283-360, Adv. Stud. Pure Math., 10, North-Holland, Amsterdam, 1987.

\bibitem[Ko96]{Ko96} J. Koll\'ar, \textit{Rational curves on algebraic varieties}, Ergebnisse der Mathematik und ihrer Grenzgebiete. 3. Folge. A Series of Modern Surveys in Mathematics [Results in Mathematics and Related Areas. 3rd Series. A Series of Modern Surveys in Mathematics], 32. Springer-Verlag, Berlin, 1996.

\bibitem[Ko21]{Ko21} J. Koll\'ar, \textit{Deformations of varieties of general type}, Milan J. Math. 89 (2021), no. 2, 345-354. 

\bibitem[KM98]{KM98}  J. Koll\'ar, S. Mori, \textit{Birational geometry of algebraic varieties}, With the collaboration of C. H. Clemens and A. Corti. Translated from the 1998 Japanese original. Cambridge Tracts in Mathematics, 134. Cambridge University Press, Cambridge, 1998.

\bibitem[Le14]{Le14} J. Lesieutre,  \textit{The diminished base locus is not always closed}, Compos. Math. 150 (2014), no. 10, 1729–1741.
 
\bibitem[Le81]{Le81} M. Levine, \textit{Deformations of uni-ruled varieties},  Duke Math. J. 48 (1981), no. 2, 467-473.

\bibitem[Lv83]{Le1} M. Levine, \textit{Pluri-canonical divisors on K\"ahler manifolds,}  Invent. Math. 74 (1983), no. 2,  293-303.

 
\bibitem[LL24]{LL24} Mu-Lin Li, Xiao-Lei Liu, \textit{Deformation rigidity for projective manifolds and isotriviality of smooth families over curves}, \href{http://arxiv.org/abs/arXiv:2407.18491}{arXiv:2407.18491}.


\bibitem[LRW25]{lrw} Mu-Lin Li, Sheng Rao, Mengjiao Wang, \textit{Rigidity for compact hyperbolic complex manifolds}, \href{http://arxiv.org/abs/arXiv:2509.05707}{arXiv:2509.05707}.

\bibitem[Ln24]{Lin24} Hsueh-Yung Lin,  \textit{Algebraic approximations of compact K\"ahler threefolds}, \href{http://arxiv.org/abs/arXiv:1710.01083v4}{arXiv:1710.01083v4}.

\bibitem[Mo66]{Mo66} B. G. Moishezon, \textit{On $n$-dimensional compact complex manifolds having $n$  algebraically independent meromorphic functions}, I, II, III,  Izv. Akad. Nauk SSSR Ser. Mat. 30 (1966), 133-174, 345-386, 621-656.

\bibitem[Mo79]{Mo79} S. Mori, \textit{Projective manifolds with ample tangent bundles}, Ann. of Math. 110
(1979), p. 593–606.

\bibitem[Mo82]{Mo82} S. Mori, \textit{Threefolds whose canonical bundles are not numerically effective}, Ann. of Math. (2) 116 (1982), no. 1, 133-176. 

\bibitem[Na87]{Na87} N. Nakayama, \textit{The lower semicontinuity of the plurigenera of complex varieties}, Algebraic geometry, Sendai, 1985, 551-590, Adv. Stud. Pure Math., 10, North-Holland, Amsterdam, 1987.  

\bibitem[Na04]{n04} N. Nakayama, \textit{Zariski-decomposition and abundance}, MSJ Memoirs 14, Math. Soc. Japan, 2004.

\bibitem[Ou25]{Ou25} Wenhao Ou, \textit{A characterization of uniruled compact K\"ahler manifolds}, 
\href{http://arxiv.org/abs/arXiv:2501.18088}{arXiv:2501.18088}.

\bibitem[Pa98]{Pa98} M. P$\breve{\textrm{a}}$un, \textit{Sur l'effectivit\'e num\'erique des images inverses de fibr\'es en droites},  Math. Ann. 310 (1998), no. 3, 411-421.

\bibitem[Pa17]{Pa17} M. P$\breve{\textrm{a}}$un, \textit{Relative adjoint transcendental classes and Albanese map of compact K\"ahler manifolds with nef Ricci curvature}, Adv. Stud. Pure Math., 74
Mathematical Society of Japan, Tokyo, 2017, 335–356.

\bibitem[Pe98]{Pe98} T. Peternell, \textit{Towards a Mori theory on compact K\"ahler threefolds. II}, Math. Ann. 311 (1998), no. 4, 729-764.

\bibitem[RT21]{RT21} Sheng Rao, I-Hsun Tsai, \textit{Deformation limit and bimeromorphic embedding of Moishezon manifolds}, Commun. Contemp. Math. 23 (2021), no. 8, Paper No. 2050087, 50 pp.

\bibitem[RT22]{RT22}Sheng Rao, I-Hsun Tsai, \emph{Invariance of plurigenera and Chow-type lemma},  Asian J. Math.,	26 (2022), no. 4, 507-554.

\bibitem[Si98]{Si98} Yum-Tong Siu, \textit{Invariance of plurigenera}, Invent. Math. 134 (1998), no. 3, 661-673.

\bibitem[Si02]{Si02} Yum-Tong Siu, \textit{Extension of twisted pluricanonical sections with plurisubharmonic weight and invariance of semipositively twisted plurigenera for manifolds not necessarily of general type}, Complex geometry (G\"{o}ttingen, 2000), 223-277, Springer, Berlin, 2002.

\bibitem[Ta85]{Ta85} K. Takegoshi, \textit{Relative vanishing theorems in analytic spaces},  Duke Math. J. 52 (1985), no. 1, 273-279. 

\bibitem[Vi91]{Vi91} E. Viehweg, \textit{Quasi-projective quotients by compact equivalence relations},  Math. Ann. 289 (1991), no. 2, 297-314.

\bibitem[VZ01]{vz} E. Viehweg, Kang Zuo, \textit{On the isotriviality of families of projective manifolds over curves}, J. Algebraic Geom. 10 (2001), no. 4, 781-799.

\bibitem[Wi78]{Wi78} P. M. H. Wilson, 
\textit{The behaviour of the plurigenera of surfaces under algebraic smooth deformations},  Invent. Math. 47 (1978), no. 3, 289-299.

\bibitem[Wi91a]{Wi91a} J. A. Wi\'sniewski, \textit{On contractions of extremal rays of Fano manifolds}, J. Reine Angew. Math. 417 (1991), 141-157. 

\bibitem[Wi91b]{Wi91b} J. A. Wi\'sniewski, \textit{On deformation of nef values}, Duke Math. J. 64 (1991), no. 2, 325-332.

\bibitem[Wi09]{Wi09} J. A. Wi\'sniewski, \textit{Rigidity of the Mori cone for Fano manifolds}, Bull. Lond. Math. Soc. 41 (2009), no. 5, 779–781.

\end{thebibliography}
\end{document}